\newcommand{\bD}{\mathbf{D}}
\newcommand{\bE}{\mathbf{E}}
\DeclareMathOperator{\Ker}{ker}
\DeclareMathOperator{\Rk}{rk}
\DeclareMathOperator{\Image}{im}
\DeclareMathOperator{\Supp}{supp}
\DeclareMathOperator{\Det}{det}
\DeclareMathOperator{\Pic}{Pic}
\DeclareMathOperator{\Ev}{ev}
\DeclareMathOperator{\Hom}{Hom}
\DeclareMathOperator{\Ext}{Ext}
\DeclareMathOperator{\Bs}{Bs}
\numberwithin{equation}{section}
\theoremstyle{plain}
\newtheorem{theorem}{Theorem}[section]
\newtheorem{lemma}[theorem]{Lemma}
\newtheorem{proposition}[theorem]{Proposition}
\newtheorem{corollary}[theorem]{Corollary}
\theoremstyle{definition}
\newtheorem{definition}[theorem]{Definition}
\newtheorem{setup}[theorem]{Setup}
\theoremstyle{remark}
\newtheorem{remark}[theorem]{Remark}
\title{Brill--Noether Generality of Curves and K3 Surfaces}
\begin{document}
	
	\author{Irina Shatova}
	\address{Laboratory of Algebraic Geometry,
			National Research University Higher School of Economics,
			Russian Federation}
	\email{ioshatova@edu.hse.ru}
	\thanks{The study has been funded within the framework of the HSE University Basic Research Program.}

	\date{}

	\begin{abstract}
	Lazarsfeld proved Brill--Noether generality of any smooth curve in the linear system~$|H|$
	where~$(X,H)$ is a polarized K3 surface with~$\Pic(X) = \mathbb{Z}\cdot H$.
	Mukai introduced the notion of Brill--Noether generality for quasi-polarized K3 surfaces.
	We prove Brill--Noether generality of any smooth curve in the linear system~$|H|$
	where~$(X,H)$ is a Brill--Noether general quasi-polarized K3 surface.
	\end{abstract}

	\maketitle

	\tableofcontents
	
	\section{Introduction}

	Brill--Noether theory is one of the central subjects of the theory of linear systems on smooth proper curves.
	Recall that, given a line bundle~$A$ on a smooth proper curve~$C$ of genus~$g$, the \emph{Brill--Noether number of $A$}
	is defined by
	\begin{equation*}
	\rho(A) \coloneqq g - h^0(A)h^1(A).
	\end{equation*}
	It is equal to the expected dimension of the \emph{Brill--Noether locus} in~$\Pic(C)$ parameterizing line bundles~$A'$
	with~$\deg(A') = \deg(A)$ and~$h^i(A') \ge h^i(A)$ for~$i \in \{0,1\}$.
	In particular, if~$\rho(A) < 0$, the expected dimension of the Brill--Noether locus is negative,
	and it is natural to expect that, if~$C$ is general, the locus is empty.
	This is indeed the case, and the usual way to formulate this is by using the following

	\begin{definition}\label{definition of a BN curve}
		A smooth proper curve $C$ is \emph{Brill--Noether general} if
		\begin{equation*}
		\rho(A) \geq 0
		\end{equation*}
		for any line bundle $A$ on $C$.
	\end{definition}

	\begin{theorem}[{\cite{MR563378, lazarsfeld1986brill}}]
	\label{brill--noether main theorem}
		For any~$g \ge 0$ a general curve of genus~$g$ is Brill--Noether general.
	\end{theorem}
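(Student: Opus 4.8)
The plan is to follow Lazarsfeld's degeneration-free argument via K3 surfaces \cite{lazarsfeld1986brill}, which also motivates the rest of the paper. The first reduction is to observe that Brill--Noether generality is an \emph{open} condition on the moduli space $M_g$: the relative Brill--Noether loci $W^r_d$ are proper over $M_g$, so for each pair $(r,d)$ with $\rho(r,d)<0$ the set of curves admitting a $g^r_d$ is closed, and its complement is open. Consequently it suffices to exhibit a \emph{single} Brill--Noether general curve in each genus $g$. The cases $g\le 1$ are immediate, since every line bundle $A$ on a curve of genus $0$ or $1$ satisfies $h^0(A)=0$, or $h^1(A)=0$, or $h^0(A)h^1(A)\le g$, whence $\rho(A)\ge 0$; so I concentrate on $g\ge 2$.

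For the construction I would take, for each $g\ge 2$, a polarized K3 surface $(X,H)$ with $\Pic(X)=\mathbb{Z}\cdot H$ and $H^2=2g-2$ (such surfaces exist by surjectivity of the period map), and a general member $C\in|H|$, which is smooth of genus $g$ by Bertini and adjunction. Suppose for contradiction that $C$ carries a line bundle $A$ with $\rho(A)<0$. Since $\rho(A)\ge 0$ as soon as $h^0(A)\le 1$ or $h^1(A)\le 1$, and since passing from $A$ to its globally generated part only decreases $\rho$ while preserving $h^0$ and keeping $h^1\ge 2$, I may assume $A$ is base-point-free with $h^0(A)\ge 2$ and $h^1(A)\ge 2$.

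The heart of the matter is the Lazarsfeld--Mukai bundle. Writing $\iota\colon C\hookrightarrow X$, base-point-freeness makes the evaluation map surjective, giving
\[
0\longrightarrow F\longrightarrow H^0(C,A)\otimes\mathcal{O}_X\xrightarrow{\ \Ev\ }\iota_*A\longrightarrow 0,
\]
which defines a vector bundle $F$; set $E=F^\vee$. A Chern-class computation yields $\Rk E=h^0(A)$, $c_1(E)=[C]=H$, $c_2(E)=\deg A$ and $h^0(E)=h^0(A)+h^1(A)$, and from the Mukai vector $v(E)=\bigl(\Rk E,\,c_1(E),\,\Rk E+\tfrac12 c_1(E)^2-c_2(E)\bigr)$ one extracts the decisive identity
\[
\chi(E,E)=-\langle v(E),v(E)\rangle=2-2\rho(A).
\]
Because $X$ is a K3 surface, Serre duality gives $\Ext^2(E,E)\cong\Hom(E,E)^\vee$, so $\chi(E,E)=2\dim\Hom(E,E)-\dim\Ext^1(E,E)$. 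Hence, if I can prove that $E$ is \emph{simple}, i.e.\ $\dim\Hom(E,E)=1$, then $2-2\rho(A)=\chi(E,E)\le 2$, forcing $\rho(A)\ge 0$ and producing the desired contradiction.

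Simplicity is the crux, and it is exactly here that $\Pic(X)=\mathbb{Z}\cdot H$ is used in an essential way. From the defining sequence one reads off $H^0(E^\vee)=H^0(F)=0$, since the induced map $H^0(A)\to H^0(C,A)$ is the identity. Now given a non-scalar $\phi\in\End(E)$, the coefficients of its characteristic polynomial lie in $H^0(X,\mathcal{O}_X)=\mathbb{C}$, so $\phi$ has a constant eigenvalue $\lambda$, and $\psi=\phi-\lambda\,\Id$ is a nonzero endomorphism with $\det\psi\equiv 0$; its saturated image $F'=\Image(\psi)$ satisfies $0<\Rk F'<\Rk E$. Since $E$ is globally generated off a finite set, so are the quotients $F'$ and $E/F'$ of $E$, whence $c_1(F')\cdot H\ge 0$ and $c_1(E/F')\cdot H\ge 0$; as these classes sum to $H$ and $\Pic(X)=\mathbb{Z}\cdot H$, necessarily $c_1(F')\in\{0,H\}$. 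In either case one of $F'$, $E/F'$ is a globally generated torsion-free sheaf with vanishing first Chern class, hence trivial, producing a nonzero map $E\to\mathcal{O}_X$ --- contradicting $H^0(E^\vee)=0$. Thus $E$ is simple. I expect the delicate points to be the careful handling of the non-locally-free loci in this last step and the verification that the reductions of the second paragraph genuinely reduce to the base-point-free case; the strict use of Picard rank one also explains why the statement must be refined for quasi-polarized surfaces, as in the sequel.
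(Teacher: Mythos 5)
Your proposal is correct and follows essentially the same route as the paper: the paper establishes this theorem exactly as you do, via openness of the Brill--Noether general locus plus Lazarsfeld's K3 argument --- the bundle $F_{A,C}$, the identity $\chi(F_{A,C}^\vee \otimes F_{A,C}) = 2 - 2\rho(A)$, and a contradiction with $\Pic(X) = \mathbb{Z}\cdot H$ --- citing \cite{MR563378, lazarsfeld1986brill} for the details. The only cosmetic difference is that you derive simplicity of $E = F_{A,C}^\vee$ directly from a non-scalar endomorphism via the constant-eigenvalue trick, whereas the paper's sketch phrases the same contradiction as a decomposition $H = -c_1(\Ker{\phi}) - c_1(\Image{\phi})$ of the polarization into two effective nonzero divisors; both are Lazarsfeld's Lemma~1.3, and your unjustified assertion that $E$ is globally generated away from a finite set does hold in your reduced setting, since $h^1(A) \ge 2$ forces the base locus of $\omega_C \otimes A^\vee$ to be finite.
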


	All proofs of this theorem are based on the openness of the locus of Brill--Noether general curves in families;
	so, to prove the theorem it is enough to find one Brill--Noether general curve of genus~$g$ for any~$g \ge 0$.

	The first proof of Theorem~\ref{brill--noether main theorem}, due to Griffiths and Harris~\cite{MR563378}, was quite complicated;
	it used degenerations of smooth curves to nodal curves.
	
	Later, Lazarsfeld found another proof \cite{lazarsfeld1986brill} --- he showed that all smooth curves in the linear system~$|H|$
	on a polarized K3 surface~$(X,H)$ with
	\begin{equation}
	\label{eq:pic1}
	\Pic(X) = \mathbb{Z} \cdot H
	\end{equation}
	are Brill--Noether general.
	The key notion in the proof of Lazarsfeld is the following.

	\begin{definition}\label{definition of L bundle}
		Let $X$ be a smooth K3 surface, let $i \colon C \hookrightarrow X$ be an embedding of a smooth curve,
		and let~$A$ be a globally generated line bundle on $C$.
		The \emph{Lazarsfeld bundle associated with the pair $(A,C)$}, denoted~$F_{A,C}$,
		is the kernel of the evaluation map $\Ev_{A,C} \colon H^0(A) \otimes \mathcal{O}_X \twoheadrightarrow i_* A$.
		In other words, the Lazarsefeld bundle~$F_{A,C}$ is defined by the exact sequence
		\begin{equation*}
		0 \longrightarrow F_{A,C} \longrightarrow H^0(A) \otimes \mathcal{O}_X \xrightarrow{\Ev_{A,C}} i_*A \longrightarrow 0.
		\end{equation*}
	\end{definition}

	To prove Theorem~\ref{brill--noether main theorem}, Lazarsfeld observed that
	\begin{equation*}
	\chi(F_{A,C}^\vee \otimes F_{A,C}) = 2 - 2\rho(A).
	\end{equation*}
	If~$\rho(A) < 0$ it follows that~$F_{A,C}$ has an endomorphism~$\phi$ with~$0 < \Rk{\phi} < \Rk{F_{A,C}}$, and therefore
	\begin{equation*}
	H = -c_1(F_{A,C}) = -c_1(\Ker{\phi}) - c_1(\Image{\phi})
	\end{equation*}
	is a decomposition of the polarization~$H$ into the sum of two effective non-zero divisors,
	contradicting assumption~\eqref{eq:pic1} (see~\cite[Lemma~1.3]{lazarsfeld1986brill} for details).

	This argument is very simple and beautiful;
	however, assumption~\eqref{eq:pic1} on which the argument relies heavily is a little bit too restrictive for some applications.
	For instance, in a family of polarized K3 surfaces, the set of those that satisfy~\eqref{eq:pic1}
	is not Zariski open; in fact, it is the complement of a countable union of divisors.
	A better replacement for assumption~\eqref{eq:pic1} was suggested by Mukai.

	Recall that a \emph{quasi-polarization} on a smooth K3 surface~$X$ is a big and nef divisor class~$H$ and its genus is defined by
	\begin{equation*}
	g(X, H) = \frac{1}{2}H^2 + 1.
	\end{equation*}
	By the adjunction formula~$g(X, H)$ is equal to the genus of any smooth curve in the linear system~$|H|$.

	\begin{definition}[{\cite[Definition~3.8]{Mukai}}]
	\label{definition of a BN general K3}
		A quasi-polarized K3 surface $(X,H)$ is called \emph{Brill--Noether general}
		if for any decomposition $H = D_1 + D_2$ with $D_i > 0$ (i.e., $D_i$ are effective and nonzero)
		one has
		\begin{equation*}
		g(X, H) - h^0(D_1)h^0(D_2) \geq 0.
		\end{equation*}
	\end{definition}

	In contrast to~\eqref{eq:pic1}, it is not hard to see that this property is Zariski open, see~\cite[Lemma~2.8]{greer2015picard}.
	Still, it was expected that the analog of the Lazarsfeld's theorem holds for Brill--Noether general K3 surfaces
	(see, e.g., \cite[Remark~10.2]{JK} for the statement
	and~\cite[Theorem~1]{Hab} for the proof under the assumption~$g(X,H) \le 19$).
	Our main result is the general case of this expectation.

	\begin{theorem}\label{main theorem}
		Let $(X,H)$ be a Brill--Noether general quasi-polarized K3 surface.
		Then every smooth curve $C \in |H|$ is Brill--Noether general.
	\end{theorem}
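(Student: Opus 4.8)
The plan is to argue by contraposition. Suppose some smooth $C \in |H|$ carries a line bundle $A$ with $\rho(A) < 0$; I will manufacture a decomposition $H = D_1 + D_2$ into effective nonzero classes that violates the inequality of Definition~\ref{definition of a BN general K3} (throughout, $g \coloneqq g(X,H)$ equals the genus of $C$). First I reduce to a convenient $A$: removing a base point $p$ of $A$ replaces it by $A(-p)$ with $h^0(A(-p)) = h^0(A)$ and $h^1(A(-p)) = h^1(A)+1$, so $\rho$ only drops; by this standard device, applied to $A$ and to the Serre-dual $K_C \otimes A^{-1}$, I may assume both are globally generated while keeping $\rho(A) < 0$. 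Since $h^0(A) = 1$ or $h^1(A) = 1$ forces $\rho(A) \ge 0$, I record $a \coloneqq h^0(A) \ge 2$ and $b \coloneqq h^1(A) \ge 2$. I then pass to the dual Lazarsfeld bundle $E \coloneqq F_{A,C}^\vee$. From the defining sequence and its dual $0 \to \mathcal{O}_X^{\oplus a} \to E \to i_*(K_C \otimes A^{-1}) \to 0$ one reads off that $E$ is globally generated, with $\Rk E = a$, $\det E = \mathcal{O}_X(H)$, and $h^0(E) = a+b$; and, applying $H^0$ to the defining sequence (where the evaluation map is an isomorphism on global sections), that $H^0(E^\vee) = H^0(F_{A,C}) = 0$.

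Next I produce the decomposition. By the identity of the excerpt, $\chi(E^\vee \otimes E) = \chi(F_{A,C}^\vee \otimes F_{A,C}) = 2 - 2\rho(A) > 2$; since $X$ is K3, Serre duality gives $\dim\Ext^2(E,E) = \dim\Hom(E,E)$, so $2\dim\Hom(E,E) \ge \chi(E^\vee \otimes E) > 2$ and $E$ admits a non-scalar endomorphism $\phi$. (Equivalently, $E$ cannot be simple: by the Mukai pairing $\chi(E^\vee\otimes E) = -v(E)^2$, whence $v(E)^2 = 2\rho(A)-2 < -2$, impossible for a simple sheaf.) As $H^0(\mathcal{O}_X) = \mathbb{C}$, the characteristic polynomial of $\phi$ has constant coefficients; choosing an eigenvalue $\lambda$ and setting $\psi \coloneqq \phi - \lambda\cdot\Id$ yields a nonzero endomorphism with $\det\psi = 0$, hence $0 < \Rk\psi < a$. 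Write $S \coloneqq \Image\psi$ and $T \coloneqq E/S$. Both are quotients of the globally generated bundle $E$, so both are globally generated, and therefore $M_1 \coloneqq c_1(S)$ and $M_2 \coloneqq c_1(T) = H - M_1$ are globally generated, in particular effective. Neither vanishes: if $M_i = 0$ then the corresponding globally generated piece would force a trivial quotient bundle of $E$, producing a nonzero section of $E^\vee = F_{A,C}$ and contradicting $H^0(F_{A,C}) = 0$. Thus $H = M_1 + M_2$ with $M_i > 0$, and, each $M_i$ being globally generated and nonzero, $h^0(M_i) \ge 2$.

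It remains to show $h^0(M_1)h^0(M_2) > g$, which contradicts Brill--Noether generality. This is immediate when $\Rk E = 2$: then $\Rk S = 1$, so $M_1 = S$ and $M_2 = T$ are \emph{line} bundles, the sequence $0 \to S \to E \to T \to 0$ gives $h^0(M_1) + h^0(M_2) \ge h^0(E) = a+b$, and since $h^0(M_i) \ge 2$ we get $h^0(M_1)h^0(M_2) \ge 2\big(h^0(M_1)+h^0(M_2)\big) - 4 \ge 2(a+b) - 4 = 2b = ab > g$, using $\rho(A) < 0$. I expect the higher-rank case $\Rk E \ge 3$ to be the main obstacle. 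There $M_1 = \det S$ and $M_2 = \det T$ are determinants of bundles of rank $>1$, and passing from $S,T$ to their determinants destroys sections: combining $h^0(\det V) \ge h^0(V) - \Rk V + 1$ for globally generated $V$ with $h^0(S) + h^0(T) \ge h^0(E)$ yields only $h^0(M_1) + h^0(M_2) \ge b+2$, which is far too weak to control the \emph{product} once $a > 2$.

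To overcome this I would replace the arbitrary endomorphism by a decomposition adapted to the linear series, aiming at the sharp bounds $h^0(M_1) \ge a$ and $h^0(M_2) \ge b$, which give $h^0(M_1)h^0(M_2) \ge ab > g$. Concretely, I would either extract a rank-one sub- or quotient-object --- for instance a sub-line-bundle $N \hookrightarrow E$ of maximal $H$-degree, whose quotient $E/N$ is again globally generated --- and induct on $\Rk E$, showing at each stage that the quotient retains the Lazarsfeld--Mukai features used above (global generation together with vanishing of $H^0$ of its dual); or prove directly the \emph{extension} statement that the negative-$\rho$ series cut on $C$ by $A$ is the restriction of a line bundle on $X$, which pins down $M_1$ with $h^0(M_1) \ge h^0(A)$ and $M_2$ with $h^0(M_2) \ge h^1(A)$. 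The section-counting in this step, rather than the production of the decomposition, is where the difficulty lies; this is consistent with the fact that the result was previously known only for $g \le 19$.
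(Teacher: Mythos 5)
There is a genuine gap, and you have located it yourself: your argument is complete only when $\Rk E = 2$, i.e.\ when $h^0(A) = 2$. For $\Rk E \ge 3$ you concede that the decomposition $H = c_1(S) + c_1(T)$ coming from a single non-scalar endomorphism cannot be section-counted: the bound $h^0(\det V) \ge h^0(V) - \Rk V + 1$ loses too much, and the two fallback strategies you sketch (inducting on a maximal-degree sub-line-bundle, or proving that the series $A$ extends to a line bundle on $X$) are not carried out --- the second is essentially a Donagi--Morrison-type lifting statement, which is a hard open-ended problem in its own right, not a lemma one can invoke. So the proposal proves the theorem only in the first nontrivial case and leaves the main difficulty unresolved. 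There are also smaller untreated points in the part you do prove (one must saturate $S = \Image\psi$ so that $T$ is torsion free before comparing $h^0(T)$ with $h^0(c_1(T))$, and the claim $c_1(T) \ne 0$ needs an argument like Lemma~\ref{lem:cc}), but these are repairable.

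The paper resolves exactly the obstacle you identify, and in a different way than either of your proposed fixes. Instead of stopping at one endomorphism, it iterates the image/kernel construction inside the kernel to produce a \emph{harmonic filtration} $F = F_1 \supsetneq \dots \supsetneq F_n \supsetneq 0$ of the Lazarsfeld bundle (Definition~\ref{def:good-filtration}, Proposition~\ref{good filtration}), whose graded pieces $F_i/F_{i+1}$ are \emph{simple} torsion-free sheaves. Simplicity is the substitute for your lossy determinant bound: writing $v(F_i/F_{i+1}) = \langle r_i, -D_i, s_i\rangle$, Lemma~\ref{formula simple sheaf Mukai vector} gives $r_is_i \le \tfrac12 D_i^2 + 1$, while additivity of Mukai vectors gives $\sum r_i = h^0(A)$, $\sum s_i = h^1(A)$, $\sum D_i = H$; combined with $h^0(D_i) \ge \chi(D_i) = \tfrac12 D_i^2 + 2$ (Lemma~\ref{lem:h0-chi}) this lets one convert the product $h^0(A)h^1(A)$ into products of $h^0$'s of effective divisors without ever estimating sections of determinants. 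The price is that $H$ is now split into $n$ pieces rather than two, so the paper needs geometric constraints on long decompositions of $H$ --- Saint-Donat's numerical $2$-connectedness, packaged in Proposition~\ref{conditions on a polarizing divisor} and Lemma~\ref{sorting fixed components} --- followed by a case analysis over the filtration length ($n=2$ in Proposition~\ref{simple kernel and image}, which is the analogue of your rank-$2$ case; $n=3$ in Proposition~\ref{theorem case n=3}, the hardest; $n=4$ in Proposition~\ref{theorem case n=4}; $n\ge 5$ directly). Your diagnosis that ``the section-counting, rather than the production of the decomposition, is where the difficulty lies'' is accurate; the missing idea is to trade a two-term decomposition with non-simple pieces for a longer filtration with simple pieces and do the counting on Mukai vectors.
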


	We expect that Theorem~\ref{main theorem} may be quite useful for the study of projective models of K3 surfaces and Fano varieties.
	For instance, it may help extending the description of~\cite[Theorem~1.3]{bayer2025mukai}
	from K3 surfaces of Picard rank~1 to Brill--Noether general K3 surfaces,
	and as a consequence, extending the description of~\cite[Theorem~1.1]{bayer2025mukai}
	from Fano varieties of Picard rank~1 to Brill--Noether general Fano varieties in the sense of~\cite[Definition~6.4]{Mukai}.

	To prove Theorem~\ref{main theorem} we also study the Lazarsfeld bundle~$F_{A,C}$.
	Namely, assuming that~$\rho(A) < 0$, so that~$F_{A,C}$ is not simple,
	we show that~$F_{A,C}$ has a ``harmonic filtration'' (see Definition~\ref{def:good-filtration}) of length~$n \ge 2$,
	whose properties are summarized in Lemma~\ref{properties of good filtration of L bundles}.
	It gives a decomposition
	\begin{equation*}
	H = D_1 + \dots + D_n
	\end{equation*}
	of the quasi-polarization~$H$ into a sum of divisor classes satisfying~$D_i^2 \ge 0$ and~$h^0(D_i) \ge 2$
	(we also use Lemma~\ref{sorting fixed components} here).
	Then we show (Proposition~\ref{conditions on a polarizing divisor}) that, for~$n \ge 3$,
	the Brill--Noether generality of~$(X,H)$ imposes serious contraints on the squares~$D_i^2$;
	explicitly, we prove that~$n \le 4$, and, moreover,
	\begin{itemize}
	\item
	if~$n = 4$ then~$D_1^2 = D_2^2 = D_3^2 = D_4^2 = 0$, and
	\item
	if~$n = 3$ then, assuming that~$D_1^2 \ge D_2^2 \ge D_3^2$, we have either
	\begin{itemize}
	\item
	$D_3^2 = D_2^2 = 0$, or
	\item
	$D_3^2 = 0$, $D_2^2 = 2$, and~$D_1^2 \in \{2,4,6\}$.
	\end{itemize}
	\end{itemize}

	\begin{remark}
	We observe that in the critical situations of Proposition~\ref{conditions on a polarizing divisor},
	if we set~\mbox{$a_i \coloneqq \frac12D_i^2 + 1$}, the triples~$(a_1,a_2,a_3)$ for~$n = 3$ in the above list
	correspond to the lengths of the legs of the Dynkin diagrams of types~$\bD_{m+3}$ and~$\bE_{m}$ with~$m = \sum a_i + 1$.
	\end{remark}

	To rule out the remaining few cases we use the more subtle properties of harmonic filtrations.
	Namely, if~$F_{A,C} = F_1 \supsetneq F_2 \supsetneq \dots \supsetneq F_{n} \supsetneq F_{n+1} = 0$ is a harmonic filtration and
	\begin{equation*}
	v(F_i/F_{i+1}) = (r_i, -D_i, s_i),
	\qquad
	1 \le i \le n,
	\end{equation*}
	are the Mukai vectors of its associated graded factors, Lemma~\ref{properties of good filtration of L bundles} proves the equalities
	\begin{equation*}
	r_1 + \dots + r_n = h^0(A),
	\qquad
	s_1 + \dots + s_n = h^1(A),
	\end{equation*}
	and the inequalities
	\begin{equation*}
	r_is_i \le \frac12D_i^2 + 1,
	\qquad
	r_i \le r_{i+1} + \dots + r_n,
	\qquad\text{and}\qquad
	s_n \ge 1.
	\end{equation*}
	By Proposition~\ref{conditions on a polarizing divisor} we know that~$n \le 4$;
	we consider the remaining cases separately.
	The case where~$n = 2$ is quite simple, see Proposition~\ref{simple kernel and image},
	the case where~$n = 4$ is more complicated, see Proposition~\ref{theorem case n=4},
	and the hardest case is~$n = 3$, see Proposition~\ref{theorem case n=3}.
	Summarizing all this, we deduce Theorem~\ref{main theorem}.	
	
	The paper is organized as follows.
	We start with a short preliminary Section~\ref{sec Preliminaries}.
	In Section~\ref{sec Divisors on Brill--Noether General K3 Surface}
	we study some implications of the Brill--Noether property of~$(X,H)$
	for decompositions of~$H$ into a sum of three or more effective divisors.
	More precisely, in Subsection~\ref{subsec Linear Systems on K3 surfaces}
	we recall some basic results on linear systems on K3 surfaces from~\cite{72768972-f7b8-3f72-b8d3-5e64ee0f3017},
	and in Subsection~\ref{subsec The necessary conditions for Brill--Noether generality}
	we prove the main result of this section, Proposition~\ref{conditions on a polarizing divisor}.

	In Section~\ref{sec Lazarsfeld Bundles} we show that Lazarsfeld bundles always admit a harmonic filtration and describe its properties.
	Finally, in Section~\ref{sec proof} we combine our results
	from Sections~\ref{sec Divisors on Brill--Noether General K3 Surface} and~\ref{sec Lazarsfeld Bundles}
	to prove Theorems~\ref{main theorem}. 
	
	\subsection*{Acknowledgemets}

	I am deeply grateful to my advisor, Alexander Kuznetsov, for his immense patience, unwavering support and careful attention to this work.

	\section{Preliminaries}\label{sec Preliminaries}
	
	In this section we recall some basic facts about Mukai vectors of sheaves on K3 surfaces
	and the Brill--Noether property.
	
	If $F$ is a coherent sheaf on a K3 surface $X$, the \emph{Mukai vector} of $F$ is the triple
	\begin{equation*}
	v(F) = \langle r(F), c_1(F), s(F) \rangle \in \mathbb{Z} \oplus \Pic{X} \oplus \mathbb{Z},
	\end{equation*}
	where $r(F) = \Rk{F}$ is the rank of $F$, $c_1(F)$ is the first Chern class of $F$, and
	\begin{equation*}
	s(F) = \Rk{F} + ch_2(F).
	\end{equation*}
	\emph{The Mukai pairing} on $\mathbb{Z} \oplus \Pic{X} \oplus \mathbb{Z}$ is defined as follows:
	\begin{equation*}
	(v(F), v(G)) := c_1(F) \cdot c_1(G) -r(F)s(G) - r(G)s(F).
	\end{equation*}
	By the Hirzebruch--Riemann--Roch theorem, one has~$s(F) = \chi(F) - \Rk{F}$ and
	\begin{equation*}
	(v(F), v(G)) = - \chi(F,G) = - \dim{\Hom}(F,G) + \dim{\Ext}^1(F,G) - \dim{\Ext}^2(F,G).
	\end{equation*}
	Recall that~$\Ext^2(F,G) \cong \Hom(G,F)^\vee$ by Serre duality; hence,
	\begin{equation*}
	(v(F), v(F)) = \dim{\Ext}^1(F,F) - 2 \dim{\Hom}(F,F).
	\end{equation*}

	The next observation will be frequently used.
	
	\begin{lemma}\label{formula simple sheaf Mukai vector}
		If a sheaf $F$ is simple then
		\begin{equation}
			r(F)s(F) \leq \frac{1}{2}c_1(F)^2 + 1.
		\end{equation}
	\end{lemma}
	
	\begin{proof}
		Indeed, $(v(F), v(F)) =  c_1(F)^2 - 2r(F)s(F) = - 2 + \dim{\Ext}^1(F,F) \geq -2.$
	\end{proof}

	We will use the following well-known result (for a proof see, e.g., \cite[Lemma~3.7(a)]{bayer2024}).

	\begin{lemma}\label{lem:cc}
		Suppose that $G \hookrightarrow \mathcal{O}_X^{\oplus n}$ is and embedding.
		Then $c_1(G) \leq 0$; moreover, if $c_1(G) = 0$ and $G$ is saturated then $G \cong \mathcal{O}_X^{\oplus s}$.
	\end{lemma}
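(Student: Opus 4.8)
The plan is to reduce the first assertion to the case where $G$ is saturated and then produce an effective divisor class $-c_1(G)$ by a determinant construction, and to handle the ``moreover'' part by splitting off a trivial bundle. Throughout I use that $X$ is a smooth surface with $\Pic(X)$ torsion-free and $H^0(\mathcal{O}_X) = \mathbb{C}$.

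First I would reduce to the saturated case. Let $\overline{G} \subseteq \mathcal{O}_X^{\oplus n}$ be the saturation of $G$, so that $\overline{G}/G$ is a torsion sheaf and $Q \coloneqq \mathcal{O}_X^{\oplus n}/\overline{G}$ is torsion-free. Since the first Chern class of a torsion sheaf on a surface is an effective class, from $0 \to G \to \overline{G} \to \overline{G}/G \to 0$ we get $c_1(G) = c_1(\overline{G}) - c_1(\overline{G}/G) \le c_1(\overline{G})$. Hence it suffices to prove $c_1(\overline{G}) \le 0$, and we may assume $G = \overline{G}$ is saturated, with torsion-free quotient $Q$ of rank $p \coloneqq n - \Rk G$.

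Now from the short exact sequence $0 \to G \to \mathcal{O}_X^{\oplus n} \to Q \to 0$ we have $c_1(G) = -c_1(Q)$, so I must show $c_1(Q)$ is effective. Applying the $p$-th exterior power to the surjection $\mathcal{O}_X^{\oplus n} \twoheadrightarrow Q$ yields a surjection $\mathcal{O}_X^{\oplus \binom{n}{p}} = \wedge^p(\mathcal{O}_X^{\oplus n}) \twoheadrightarrow \wedge^p Q$; composing with the canonical map $\wedge^p Q \to \det Q \coloneqq (\wedge^p Q)^{\vee\vee}$, which is nonzero since it is an isomorphism over the locus where $Q$ is locally free (the complement of a set of codimension $\ge 2$), produces a nonzero map $\mathcal{O}_X^{\oplus \binom{n}{p}} \to \det Q$. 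Thus $\det Q = \mathcal{O}_X(c_1(Q))$ has a nonzero global section, so $c_1(Q)$ is effective and $c_1(G) = -c_1(Q) \le 0$.

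For the ``moreover'' part, assume $G$ saturated and $c_1(G) = 0$, so that $c_1(Q) = 0$ as well. The quotient $Q$ is globally generated (being a quotient of $\mathcal{O}_X^{\oplus n}$), torsion-free of rank $p$. I would choose $p$ general sections among the generators to form $\phi \colon \mathcal{O}_X^{\oplus p} \to Q$; since the generators span the fiber of $Q$ at the generic point, a general choice makes $\phi$ an isomorphism there, so $\det\phi$ is a nonzero global section of the line bundle $\det Q$ with $c_1(\det Q) = 0$. On a K3 surface such a section is nowhere vanishing and trivializes $\det Q \cong \mathcal{O}_X$, and $\phi$ is an isomorphism over the open set $U$ where $Q$ is locally free. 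As $X \setminus U$ has codimension $\ge 2$, the reflexive sheaf $Q^{\vee\vee}$ is locally free and isomorphic to $\mathcal{O}_X^{\oplus p}$ (it agrees with $\mathcal{O}_X^{\oplus p}$ off codimension two), and the composite $\mathcal{O}_X^{\oplus p} \xrightarrow{\phi} Q \hookrightarrow Q^{\vee\vee} \cong \mathcal{O}_X^{\oplus p}$ has constant nonvanishing determinant, hence is an isomorphism; since $Q \hookrightarrow Q^{\vee\vee}$ is injective, this forces $Q \cong \mathcal{O}_X^{\oplus p}$. Finally a surjection $\mathcal{O}_X^{\oplus n} \twoheadrightarrow \mathcal{O}_X^{\oplus p}$ of free sheaves splits, whence $G \cong \Ker \cong \mathcal{O}_X^{\oplus s}$ with $s = n - p = \Rk G$. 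The main obstacle is precisely this last step: one must correctly control the codimension-two locus where $Q$ fails to be locally free and verify that a general tuple of sections yields a nonvanishing determinant, whereas the inequality $c_1(G) \le 0$ is an essentially formal consequence of taking exterior powers.
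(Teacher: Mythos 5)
Your proof is correct. Note, however, that the paper itself does not prove Lemma~\ref{lem:cc} at all: it is stated as a well-known result with a citation to \cite[Lemma~3.7(a)]{bayer2024}, so there is no internal argument to compare against; your write-up is a self-contained substitute for that reference. The structure is the standard one and all the key steps check out: the reduction to the saturated case via $c_1$ of a torsion sheaf being effective (the same fact the paper invokes at~\eqref{eq:saturation-c1}); the effectivity of $c_1(Q)$ via the surjection $\bigwedge^p(\mathcal{O}_X^{\oplus n}) \twoheadrightarrow \bigwedge^p Q$ composed with $\bigwedge^p Q \to \det Q$; and, in the ``moreover'' part, the trivialization of $\det Q$ by the generically nonzero section $\det\phi$ (using that an effective divisor with trivial class on a projective surface is zero), the extension of the isomorphism across the codimension-two locus using reflexivity of $Q^{\vee\vee}$ and constancy of the determinant, and the splitting of $\mathcal{O}_X^{\oplus n} \twoheadrightarrow \mathcal{O}_X^{\oplus p}$, which indeed reduces to linear algebra since any such map is a constant matrix when $H^0(\mathcal{O}_X) = \mathbb{C}$, so the kernel is the trivial bundle on a linear subspace. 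The only places where you are terse --- that a subset of the generators gives a basis of the generic fiber, and that two reflexive sheaves agreeing off codimension two are isomorphic --- are both standard and correctly used, so I see no gap.
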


	Recall Definitions~\ref{definition of a BN curve} and~\ref{definition of a BN general K3}
	of a Brill--Noether general curve and of a Brill--Noether general K3 surface.
	It is well known (see, e.g., \cite[Remark~10.2]{JK}) that one property implies the other.
	
	\begin{proposition}\label{prop:converse}
		Let $(X,H)$ be a quasi-polarized K3 surface and let $C\in|H|$ be a smooth curve.
		If $C$ is Brill--Noether general then so is~$(X,H)$.
	\end{proposition}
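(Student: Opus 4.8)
The statement to prove is Proposition~\ref{prop:converse}: if $(X,H)$ is a quasi-polarized K3 surface and some smooth curve $C\in|H|$ is Brill--Noether general (as a curve), then $(X,H)$ is Brill--Noether general (as a quasi-polarized K3 surface). Let me think about what each side means and how to connect them.

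The K3-surface side asks: for every decomposition $H = D_1 + D_2$ with $D_i > 0$ effective and nonzero, we need $g(X,H) - h^0(D_1)h^0(D_2) \geq 0$.

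The curve side says: for every line bundle $A$ on $C$, $\rho(A) = g - h^0(A)h^1(A) \geq 0$.

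So the natural strategy is: given a decomposition $H = D_1 + D_2$ on the surface, manufacture a line bundle $A$ on the curve $C$ whose Brill--Noether number $\rho(A)$ encodes the quantity $g(X,H) - h^0(D_1)h^0(D_2)$.

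**Let me think about the construction.** The obvious candidate is $A = \mathcal{O}_C(D_1) = i^*\mathcal{O}_X(D_1)$, the restriction of $D_1$ to $C$. Then by adjunction $\mathcal{O}_C(D_2) = K_C \otimes A^{-1}$ since $K_C = \mathcal{O}_C(H)|_C = \mathcal{O}_C(D_1+D_2)|_C$ (K3 has trivial canonical, so $K_C = \mathcal{O}_C(C)|_C = \mathcal{O}_C(H)$). So $h^1(A) = h^0(K_C \otimes A^{-1}) = h^0(\mathcal{O}_C(D_2))$. Thus $\rho(A) = g - h^0(\mathcal{O}_C(D_1))\,h^0(\mathcal{O}_C(D_2))$, and $g = g(X,H)$.

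So I need to compare $h^0(\mathcal{O}_C(D_i))$ with $h^0_X(\mathcal{O}_X(D_i))$. The mechanism is the restriction sequence
$$0 \to \mathcal{O}_X(D_i - H) \to \mathcal{O}_X(D_i) \to \mathcal{O}_C(D_i) \to 0.$$
Taking cohomology, $h^0(\mathcal{O}_X(D_i)) \leq h^0(\mathcal{O}_C(D_i))$ provided $H^0(\mathcal{O}_X(D_i - H)) = 0$, i.e. $H^0(\mathcal{O}_X(-D_j)) = 0$ (where $\{i,j\}=\{1,2\}$); but $-D_j$ is the negative of a nonzero effective class, so it has no sections. Hence $h^0(\mathcal{O}_C(D_i)) \geq h^0(\mathcal{O}_X(D_i))$ for each $i$.

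**Finishing.** Since $C$ is Brill--Noether general, $0 \leq \rho(A) = g(X,H) - h^0(\mathcal{O}_C(D_1))\,h^0(\mathcal{O}_C(D_2)) \leq g(X,H) - h^0(\mathcal{O}_X(D_1))\,h^0(\mathcal{O}_X(D_2))$, which is exactly the K3 Brill--Noether inequality.

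The main subtlety will be handling the edge cases in the inequality directions and making sure the $A$ I choose is genuinely a line bundle on $C$ (it is, being a restriction) with the claimed $h^0, h^1$ — in particular verifying the adjunction identity $K_C \cong \mathcal{O}_C(H)$ and the vanishing $H^0(\mathcal{O}_X(-D_j)) = 0$, both of which are clean. I expect no serious obstacle here; the content is entirely in choosing $A = \mathcal{O}_C(D_1)$ and using the restriction sequences, so this proposition should be short.
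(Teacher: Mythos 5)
Your proposal is correct and is essentially identical to the paper's proof: the paper also takes $A = \mathcal{O}_C(D_1)$, identifies $\mathcal{O}_C(D_2) \cong A^\vee \otimes \omega_C$ via adjunction, and uses the same restriction sequences together with the vanishing $h^0(-D_j)=0$ (valid since $D_j > 0$) to get $h^0(\mathcal{O}_C(D_i)) \ge h^0(\mathcal{O}_X(D_i))$. The only cosmetic difference is that the paper phrases the argument as a proof by contradiction, while you argue directly; the mathematical content is the same.
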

	
	\begin{proof}
		We denote the genus $g(X,H) = g(C)$ simply by $g$.
		Suppose for a contradiction that the quasi-polarized K3 surface~$(X,H)$ is not Brill--Noether general,
		i.e., there is a decomposition $H = D_1 + D_2$ with $D_i > 0$
		and~$h^0(D_1)h^0(D_2) > g$.
		Note that $D_1 - H = -D_2$ and $D_2 - H = -D_1$. Therefore, the restriction exact sequences take the form:
		\begin{align}\label{restriction sequence 1}
			0 \longrightarrow \mathcal{O}(-D_2) \longrightarrow \mathcal{O}(D_1) \longrightarrow \mathcal{O}_C(D_1) \longrightarrow 0,
		\\
		\label{restriction sequence 2}
			0 \longrightarrow \mathcal{O}(-D_1) \longrightarrow \mathcal{O}(D_2) \longrightarrow \mathcal{O}_C(D_2) \longrightarrow 0.
		\end{align}
		We denote $A \coloneqq \mathcal{O}_C(D_1)$;
		then $\mathcal{O}_C(D_2) = \mathcal{O}_C(-D_1) \otimes \mathcal{O}_C(H) = A^{\vee} \otimes \omega_C$.
		Since $D_i > 0$ by assumption, we have $h^0(-D_1) = h^0(-D_2) = 0$
		and it follows from~\eqref{restriction sequence 1} and~\eqref{restriction sequence 2}
		that $h^0(A) \geq h^0(D_1)$ and \mbox{$h^0(A^{\vee} \otimes \omega_C) \geq h^0(D_2)$}.
		Thus, we obtain
		\begin{equation*}
		h^0(A)h^1(A) = h^0(A)h^0(A^{\vee} \otimes \omega_C) \geq h^0(D_1)h^0(D_2) > g.
		\end{equation*}
		This contradicts the assumption that the curve~$C$ is Brill--Noether general.
	\end{proof}
		
	The following lemma allows us to reduce Theorems~\ref{main theorem}
	to the case of globally generated line bundle with globally generated adjoint.
		
	\begin{lemma}\label{global generation of L bundle^*}
		Suppose that there is a line bundle~$A$ on a smooth curve $C$ such that~$\rho(A) < 0$.
		Then there is a globally generated line bundle $\bar{A}$ on $C$ with globally generated adjoint bundle~$\bar{A}^{\vee} \otimes \omega_C$
		such that~$\rho(\bar{A}) < 0$.
	\end{lemma}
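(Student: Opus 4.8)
The goal is to reduce the Brill--Noether failure for an arbitrary line bundle $A$ with $\rho(A)<0$ to one where both $A$ and its adjoint are globally generated. The plan is to start from $A$ and successively remove the base loci of $A$ and of $A^\vee\otimes\omega_C$, while tracking the Brill--Noether number.

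First I would recall that $\rho(A)=\rho(A^\vee\otimes\omega_C)$, since by Riemann--Roch $h^0(A)-h^1(A)=\deg A+1-g$ and $h^0(A^\vee\otimes\omega_C)=h^1(A)$, $h^1(A^\vee\otimes\omega_C)=h^0(A)$; thus the roles of the two numbers $h^0,h^1$ simply swap and their product is unchanged. This symmetry means that I can treat $A$ and $A^\vee\otimes\omega_C$ on equal footing.

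Next I would show that passing to the moving part does not increase $\rho$. Write $A \cong \mathcal{O}_C(B+F)$ where $F$ is the base divisor (the fixed part) and $B$ is base-point free, so $\bar A \coloneqq \mathcal{O}_C(B)$ is globally generated. Since removing the base points does not change the space of sections, $h^0(\bar A)=h^0(A)$. On the other hand $\bar A = A(-F)$ is a subsheaf of $A$, so by Serre duality $h^1(\bar A)=h^0(\bar A^\vee\otimes\omega_C)=h^0(A^\vee\otimes\omega_C(F))\geq h^0(A^\vee\otimes\omega_C)=h^1(A)$, because $F\geq 0$ gives an inclusion of sections. Hence
\begin{equation*}
\rho(\bar A)=g-h^0(\bar A)h^1(\bar A)\leq g-h^0(A)h^1(A)=\rho(A)<0,
\end{equation*}
so $\bar A$ is globally generated and still has negative Brill--Noether number. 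Using the $A\leftrightarrow A^\vee\otimes\omega_C$ symmetry, I would then apply the identical argument to $\bar A^\vee\otimes\omega_C$: replacing it by its moving part produces a globally generated adjoint while only decreasing $\rho$, and dualizing back yields a bundle that is globally generated with globally generated adjoint and $\rho<0$.

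The only subtlety — and the main thing to check carefully — is that the two base-point-removal steps do not interfere: after making the adjoint globally generated in the second step, one must confirm that the bundle itself remains globally generated (or that global generation can be arranged simultaneously). I would address this by noting that the second step replaces $A$ by a subsheaf $A(-F')$ for an effective $F'$, which can only shrink the base locus of $A$ on the source side while it is designed to clear the base locus of the adjoint; since global generation is preserved under this operation (removing further base points keeps a base-point-free bundle base-point-free, as $h^0$ is unchanged and the linear system only loses fixed points), both properties hold for the final bundle $\bar A$. This establishes the lemma.
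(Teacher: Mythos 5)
Your two-step strategy (first pass to the moving part of $A$, then fix the adjoint) can in principle be made to work, but the step you yourself flag as ``the only subtlety'' is precisely where your argument breaks, and your justification of it is incorrect. Making the adjoint $\bar A^{\vee}\otimes\omega_C$ globally generated means replacing it by its moving part $\bar A^{\vee}\otimes\omega_C(-F')$, which after dualizing back replaces $\bar A$ by the \emph{larger} line bundle $\bar A(F')$ --- not by a subsheaf $\bar A(-F')$ as you state. Consequently $h^0$ is not unchanged in this step: in fact $h^0(\bar A(F')) = h^0(\bar A) + \deg F'$. Moreover, the general principle you invoke is false: twisting a globally generated line bundle up by an effective divisor can create base points (on any curve of genus $\ge 1$, $\mathcal{O}_C$ is globally generated while $\mathcal{O}_C(p)$ is not). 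So your write-up leaves unproven the crucial claim that the bundle itself remains base-point-free after the base locus of the adjoint is cleared.

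That claim is actually true, but it requires an argument specific to the fact that $F'$ is the \emph{entire} base divisor of the adjoint: (i) iterating the one-point computation (if $p \in \Bs(\bar A^{\vee}\otimes\omega_C)$ then $h^0(\bar A(p)) = h^0(\bar A)+1$ and the base divisor of the new adjoint is $F'-p$) gives $h^0(\bar A(G)) = h^0(\bar A) + \deg G$ for every $0 \le G \le F'$; hence for $q \in \Supp F'$ one has $h^0(\bar A(F'-q)) = h^0(\bar A(F'))-1$, so $q$ is not a base point of $\bar A(F')$; (ii) if some $q \notin \Supp F'$ were a base point of $\bar A(F')$, then every section of $\bar A$, being a section of $\bar A(F')$ vanishing on $F'$, would also vanish at $q$, contradicting base-point-freeness of $\bar A$ (note $h^0(\bar A) = h^0(A) \ge 1$ since $h^0(A)h^1(A) > g$). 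The paper sidesteps this issue entirely: it removes base points one at a time, alternating between $A$ and its adjoint ($A \mapsto A(-p)$ or $A \mapsto A(p)$), observes that each step strictly increases the product $h^0h^1$, and uses the uniform bound $h^0(B)h^1(B) \le g^2$ to force termination --- and at termination both bundles are globally generated by construction. In other words, the paper trades your non-interference claim for a boundedness argument, which is exactly why it never has to confront the point your proof glosses over.
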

	
	\begin{proof}
		First, we check that for any line bundle $B$ on a curve $C$ of genus $g$ the product $h^0(B)h^1(B)$ is uniformly bounded.
		Indeed, we can assume that $h^0(B) > 0$ and~$h^1(B) > 0$.
		Then there is an exact sequence
		\begin{equation*}
		0 \to \mathcal{O}_C \to B \to F \to 0,
		\end{equation*}
		where~$\dim(\Supp(F)) = 0$. Hence,~$h^1(F) = 0$ and therefore~$h^1(B) \le h^1(\mathcal{O}_C) = g$.
		Applying Serre duality, we also see that $h^0(B) \leq g$.
		This gives the required uniform bound~$h^0(B)h^1(B) \leq  g^2$.
		
		Now, let~$A$ be a line bundle on the curve~$C$ such that~$\rho(A) < 0$.
		If $\Bs{(A)} \neq \varnothing$ and $p \in \Bs{(A)}$, we take $A' \coloneqq A(-p)$;
		then $h^0(A') = h^0(A)$ and $h^1(A') > h^1(A)$.
		Similarly, if $\Bs{(A^{\vee} \otimes \omega_C)} \neq \varnothing$ and $p \in \Bs{(A^{\vee} \otimes \omega_C)}$,	we take $A' \coloneqq A(p)$;
		then $h^0(A') > h^0(A)$ and $h^1(A') = h^1(A)$.
		In either case we have
		\begin{equation*}
		h^0(A')h^1(A') > h^0(A)h^1(A),
		\end{equation*}
		hence~$\rho(A') < \rho(A)$.
		Therefore, iterating this procedure and using the boundedness of the product of~$h^0$ and~$h^1$,
		we see that we will eventually arrive at a line bundle $\bar{A}$
		such that $\bar{A}$ and $\bar{A}^{\vee} \otimes \omega_C$ are globally generated
		and~$\rho(\bar{A}) < 0$, as required.
	\end{proof}

	\section{Divisors on Brill--Noether general K3 surface}
	\label{sec Divisors on Brill--Noether General K3 Surface}

	The Brill--Noether property of a quasi-polarized K3 surface~$(X,H)$ is defined in terms of decompositions of~$H$
	into a sum of two effective divisor classes.
	The main result of this section, Proposition~\ref{conditions on a polarizing divisor},
	provides necessary conditions for Brill--Noether generality in terms of decomposition of~$H$
	into a sum of three or more divisor classes.
	This serves as one of the two crucial ingredients in the proof of Theorem~\ref{main theorem}.
	
	\subsection{Linear systems on K3 surfaces}\label{subsec Linear Systems on K3 surfaces}

	In this subsection we briefly recall the fundamental results from~\cite{72768972-f7b8-3f72-b8d3-5e64ee0f3017}
	about linear systems on K3 surfaces and derive some basic consequences from them.

	\begin{definition}[\cite{72768972-f7b8-3f72-b8d3-5e64ee0f3017}, Definition 3.3]
		Let $D$ be an effective divisor on a nonsingular surface $X$. We say that $D$ is \emph{numerically $m$-connected} if for any decomposition $D = D_1 + D_2$ with $D_i > 0$ the inequality $D_1 \cdot D_2 \geq m$ holds.
	\end{definition}

	\begin{lemma}[\cite{72768972-f7b8-3f72-b8d3-5e64ee0f3017}, Lemma 3.7]\label{numerically 2-connected}
		Let $C$ be an irreducible curve on a K3 surface $X$ such that $C^2 > 0$. Then any divisor in the linear system $|C|$ is numerically $2$-connected.
	\end{lemma}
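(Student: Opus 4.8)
The plan is to restrict the whole situation to the curve $C$ and to play off the Riemann--Roch theorem on $X$ against Clifford's theorem on $C$. Fix a divisor $D \in |C|$ together with a decomposition $D = A + B$ with $A, B > 0$; since $D \sim C$ we have $B \sim C - A$ and $A \sim C - B$ as divisor classes, and the goal is to prove the inequality $A \cdot B \ge 2$.

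First I would produce a lower bound for $h^0(\mathcal{O}_C(A))$, where $\mathcal{O}_C(A) := \mathcal{O}_X(A)|_C$. Restricting $\mathcal{O}_X(A)$ to $C$ gives the exact sequence $0 \to \mathcal{O}_X(A - C) \to \mathcal{O}_X(A) \to \mathcal{O}_C(A) \to 0$, and $A - C \sim -B$ with $B > 0$, so $h^0(\mathcal{O}_X(A - C)) = 0$ and hence the restriction map is injective, giving $h^0(\mathcal{O}_C(A)) \ge h^0(\mathcal{O}_X(A))$. On the other hand, since $A > 0$ we have $h^2(\mathcal{O}_X(A)) = h^0(\mathcal{O}_X(-A)) = 0$, so Riemann--Roch on the K3 surface yields $h^0(\mathcal{O}_X(A)) \ge \chi(\mathcal{O}_X(A)) = 2 + \tfrac{1}{2}A^2$. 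Thus $h^0(\mathcal{O}_C(A)) \ge 2 + \tfrac{1}{2}A^2$; in particular this sheaf has sections.

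Next I would check that the line bundle $\mathcal{O}_C(A)$ is \emph{special}, i.e.\ $h^1(\mathcal{O}_C(A)) > 0$. The dualizing sheaf of the Gorenstein curve $C$ is $\omega_C = \mathcal{O}_C(C)$ by adjunction on the K3 surface, so Serre duality on $C$ gives $h^1(\mathcal{O}_C(A)) = h^0(\omega_C \otimes \mathcal{O}_C(A)^\vee) = h^0(\mathcal{O}_C(C - A)) = h^0(\mathcal{O}_C(B))$. Exactly as above, the restriction sequence for $B$ (using $B - C \sim -A$ with $A > 0$) produces $h^0(\mathcal{O}_C(B)) \ge h^0(\mathcal{O}_X(B)) \ge 1$, the last inequality because $B$ is effective; hence $h^1(\mathcal{O}_C(A)) \ge 1$. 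Since $\mathcal{O}_C(A)$ is a line bundle on the integral (Gorenstein) curve $C$ with both $h^0 > 0$ and $h^1 > 0$, Clifford's theorem applies and gives $h^0(\mathcal{O}_C(A)) \le \tfrac{1}{2}\deg_C \mathcal{O}_C(A) + 1 = \tfrac{1}{2}(A \cdot C) + 1$. Combining this with the lower bound and substituting $A \cdot C = A^2 + A \cdot B$ gives $2 + \tfrac{1}{2}A^2 \le \tfrac{1}{2}A^2 + \tfrac{1}{2}(A \cdot B) + 1$, that is, $A \cdot B \ge 2$, as required.

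The step that needs the most care, and which I regard as the main obstacle, is the appeal to Clifford's theorem on $C$: in general $C$ is only an integral Gorenstein curve rather than a smooth one, so I would invoke the form of Clifford's inequality valid for line bundles on integral Gorenstein curves (legitimate here because $\mathcal{O}_C(A)$ is the restriction of an invertible sheaf from the smooth surface $X$). If one prefers to avoid the singular Clifford statement, an alternative route is to use the Hodge index theorem: since $C^2 > 0$ it forces $A \cdot B \ge ab/(a+b)$ with $a = A \cdot C$ and $b = B \cdot C$, which together with the parity of self-intersection numbers on the K3 lattice reduces everything to the single boundary case $A \cdot C = 1$, $A^2 = 0$; that case is then excluded because $\mathcal{O}_C(A)$ would be a degree-one line bundle with at least two sections on a curve of arithmetic genus at least two, which is impossible.
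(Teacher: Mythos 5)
The paper itself offers no proof of this lemma: it is imported verbatim from Saint-Donat \cite{72768972-f7b8-3f72-b8d3-5e64ee0f3017} and used as a black box, so there is no internal argument to compare yours against; the only question is whether your proof stands on its own, and it does. The chain $2 + \tfrac{1}{2}A^2 = \chi(\mathcal{O}_X(A)) \le h^0(\mathcal{O}_X(A)) \le h^0(\mathcal{O}_C(A))$ (Riemann--Roch on $X$ plus the restriction sequence, using $h^0(-A) = h^0(-B) = 0$), the speciality $h^1(\mathcal{O}_C(A)) = h^0(\mathcal{O}_C(B)) \ge 1$ (adjunction and Serre duality on $C$), and Clifford's bound $h^0(\mathcal{O}_C(A)) \le \tfrac{1}{2}A \cdot C + 1$ combine to give $A \cdot B \ge 2$; note the argument is insensitive to the sign of $A^2$, since Clifford needs only $h^0 > 0$ and $h^1 > 0$, both of which you verified. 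You also correctly isolate the one delicate input: $C$ may be singular, so one needs Clifford's inequality for invertible sheaves on an integral Gorenstein curve. That version is indeed available --- the usual proof (the multiplication map $H^0(L) \otimes H^0(\omega_C \otimes L^\vee) \to H^0(\omega_C)$ has image of dimension at least $h^0(L) + h^1(L) - 1$ on any integral curve, combined with Riemann--Roch and Serre duality for the Gorenstein dualizing sheaf) carries over verbatim --- though the relevant hypotheses are that $C$ is integral and Gorenstein and that the sheaf is invertible (which restriction from $X$ guarantees), rather than the extension to $X$ per se. Your Hodge-index fallback is also sound and closer in spirit to purely numerical treatments; it trades the singular-curve Clifford input for the boundary-case analysis $A \cdot C = 1$, $A^2 = 0$, which you dispose of correctly. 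Incidentally, your main argument never actually uses the hypothesis $C^2 > 0$.
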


	\begin{remark}\label{2-connectedness of a polarisation}
		If $(X, H)$ is a quasi-polarized K3 surface then $H^2 > 0$ and by Bertini's theorem a general element in $|H|$ is irreducible. Therefore, Lemma~\ref{numerically 2-connected} implies that any divisor in $|H|$ is numerically 2-connected.
	\end{remark}

	The following property of linear systems on K3 surfaces is of primary importance.

	\begin{proposition}[\cite{72768972-f7b8-3f72-b8d3-5e64ee0f3017}, Corollary 3.2]\label{no fixed components}
		Let $|D|$ be a complete linear system on a K3 surface. Then $|D|$ has no base points outside its fixed components.
	\end{proposition}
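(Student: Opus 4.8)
The plan is to reduce the statement to the base-point-freeness of the moving part of~$|D|$. I would write $|D| = |M| + F$, where~$F$ is the fixed part and~$|M|$ is the moving part, so that the base points of~$|D|$ lying outside~$F$ are exactly the base points of~$|M|$; it therefore suffices to show that~$|M|$ is base-point-free. We may assume~$\dim|M| \ge 1$, since otherwise all of~$D$ is fixed and there is nothing to prove. As~$|M|$ has no fixed components, every irreducible curve~$\Gamma$ satisfies~$M \cdot \Gamma \ge 0$ (otherwise~$\Gamma$ would be contained in every member of~$|M|$ and hence be a fixed component), so~$M$ is nef and in particular~$M^2 \ge 0$. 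I would then split into the two cases~$M^2 = 0$ and~$M^2 > 0$.

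In the case~$M^2 = 0$, I would invoke the structure of nef classes of square zero on a K3 surface: such an~$M$ is a positive multiple~$M = kE$ of the class of a smooth elliptic curve~$E$ with~$E^2 = 0$, whose linear system~$|E|$ is a base-point-free pencil defining an elliptic fibration~$X \to \mathbb{P}^1$. Then~$M = kE$ is the pullback of~$\mathcal{O}_{\mathbb{P}^1}(k)$ along this fibration and is therefore base-point-free.

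The main case is~$M^2 > 0$, where~$M$ is nef and big. Here the key is the adjunction isomorphism on a K3 surface: for any curve~$C \in |M|$ one has~$\mathcal{O}_C(M) = \mathcal{O}_C(C) \cong \omega_C$, since~$\omega_X \cong \mathcal{O}_X$. Restricting along~$C$ yields the exact sequence
\[
0 \longrightarrow \mathcal{O}_X \longrightarrow \mathcal{O}_X(M) \longrightarrow \omega_C \longrightarrow 0,
\]
and because~$H^1(X, \mathcal{O}_X) = 0$ on a K3 surface, the restriction map~$H^0(\mathcal{O}_X(M)) \to H^0(\omega_C)$ is surjective. Consequently a point of~$C$ is a base point of~$|M|$ if and only if it is a base point of the complete canonical system~$|\omega_C|$. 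Taking~$C$ to be a general member, which is an integral curve of arithmetic genus~$\tfrac12 M^2 + 1 \ge 2$ (using that~$|M|$ has no fixed part and that its members are numerically~$2$-connected by Lemma~\ref{numerically 2-connected}), the system~$|\omega_C|$ is base-point-free. Since every base point of~$|M|$ lies on~$C$, it follows that~$|M|$ has no base points.

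I expect the delicate points to be twofold: (i) the structure theorem identifying a nef class of square zero with a multiple of an elliptic pencil in the case~$M^2 = 0$, and (ii) ensuring that the general member~$C$ in the case~$M^2 > 0$ is integral, so that~$\omega_C$ is genuinely the dualizing sheaf of an integral Gorenstein curve and its canonical system is base-point-free. Both rely on the fine structure of linear systems on K3 surfaces, and for~(ii) the numerical connectedness supplied by Lemma~\ref{numerically 2-connected} is what does the real work.
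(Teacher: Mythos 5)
The paper offers no proof of this statement at all: it is quoted verbatim from Saint-Donat \cite[Corollary~3.2]{72768972-f7b8-3f72-b8d3-5e64ee0f3017}, so your attempt can only be measured against the classical argument. Your route is essentially that argument (and the one in the standard modern expositions): split off the fixed part and reduce to the moving part~$M$; for~$M^2 = 0$ use the elliptic-pencil structure theorem (Saint-Donat's Proposition~2.6, which the paper itself invokes in the proof of Lemma~\ref{intersection of two divisors without fixed parts}); for~$M^2 > 0$ restrict to a general member~$C$, use~$H^1(X,\mathcal{O}_X) = 0$ and~$\mathcal{O}_C(C) \cong \omega_C$ to identify the base points of~$|M|$ with the base points of~$|\omega_C|$, and conclude by global generation of the dualizing sheaf of an integral Gorenstein curve of arithmetic genus at least~$2$. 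You are also right that~$C$ cannot be assumed smooth --- Bertini smoothness fails exactly at the would-be base points --- so the Gorenstein form of that last result (Catanese et al.) is genuinely needed, not a convenience.

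The one step that fails as written is your justification that the general member~$C \in |M|$ is integral. You appeal to Lemma~\ref{numerically 2-connected}, but that lemma takes as its \emph{hypothesis} an irreducible curve of positive square whose linear system is being considered; using it to produce an irreducible member is circular, since that is precisely what is at stake. Moreover, numerical $2$-connectedness would not give integrality anyway: a divisor~$C_1 + C_2$ with~$C_1 \cdot C_2 \ge 2$ is $2$-connected and reducible. The correct argument rules out that~$|M|$ is composed with a pencil: since~$h^1(\mathcal{O}_X) = 0$, any pencil on~$X$ is linear, so if~$|M|$ were composed with a pencil~$|P|$ (necessarily without fixed components, as~$|M|$ has none) one would have~$M \sim kP$ and~$\dim|M| = k$, while Riemann--Roch together with~$h^2(M) = h^0(-M) = 0$ (Lemma~\ref{lem:h0-chi}) gives~$\dim|M| \ge \tfrac{1}{2}k^2P^2 + 1$; this forces~$P^2 = 0$, hence~$M^2 = 0$, a contradiction. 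Once~$|M|$ is not composed with a pencil, Bertini's theorem (in characteristic~$0$) gives that its general member is irreducible and reduced. With this replacement your proof is complete and coincides with the standard one.
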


	Now we prove a simple technical lemma that will be used later.

	\begin{lemma}\label{intersection of two divisors without fixed parts}
		Let $M_1$ and $M_2$ be divisors without fixed components.
		\begin{enumerate}[label={\textup{(\arabic*)}}]
			\item \label{case 1 lemma intersection of two divisors without fixed parts} If $M_1 \cdot M_2 = 0$,
			then $M_1 \sim n_1E$ and $M_2 \sim n_2E$ where $E$ is a smooth curve of genus $1$ and $n_1, n_2 \geq 1$.
			In other words, the linear systems $|M_1|$ and $|M_2|$ are composed with the same elliptic pencil $|E|$.
			\item \label{case 2 lemma intersection of two divisors without fixed parts} If $M_1 \cdot M_2 > 0$, then $(M_1 + M_2)^2 \geq 4$.
		\end{enumerate}
	\end{lemma}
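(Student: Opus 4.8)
The plan is to analyze the two cases using the structural results on linear systems just recalled, together with the numerical properties of the divisors $M_1$ and $M_2$.

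For case~\ref{case 1 lemma intersection of two divisors without fixed parts}, suppose $M_1 \cdot M_2 = 0$. Since $M_i$ has no fixed components, by Proposition~\ref{no fixed components} the linear system $|M_i|$ is base-point free, so a general member is a disjoint union of irreducible curves (or, after moving, we may work with the moving parts directly). I would first argue that $M_i^2 \geq 0$: indeed, a base-point-free system has nef general member, so $M_i^2 = M_i \cdot M_i \geq 0$. Next I claim $M_i^2 = 0$. Suppose instead that, say, $M_1^2 > 0$. Then a general member of $|M_1|$ is connected (this follows from $M_1$ being nef and big, or directly from numerical $2$-connectedness à la Remark~\ref{2-connectedness of a polarisation}/Lemma~\ref{numerically 2-connected}), and being nef with $M_1^2 > 0$ it meets every other effective curve positively; in particular $M_1 \cdot M_2 > 0$ whenever $M_2 > 0$, contradicting $M_1 \cdot M_2 = 0$. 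Hence $M_1^2 = M_2^2 = 0$. A nef divisor $M$ with $M^2 = 0$ on a K3 surface with no fixed components is, by the standard theory (Saint-Donat), a multiple of the class of a smooth elliptic curve: $M \sim nE$ with $E^2 = 0$ and $|E|$ an elliptic pencil. Finally, $M_1 \cdot M_2 = 0$ forces $E_1 \cdot E_2 = 0$ for the two elliptic classes; two distinct elliptic pencils on a K3 surface satisfy $E_1 \cdot E_2 \geq 2$, so $E_1$ and $E_2$ must be numerically (hence, on a K3, linearly) equivalent, giving the common pencil $|E|$.

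For case~\ref{case 2 lemma intersection of two divisors without fixed parts}, suppose $M_1 \cdot M_2 > 0$, i.e.\ $M_1 \cdot M_2 \geq 1$. We want $(M_1 + M_2)^2 \geq 4$. Expanding,
\begin{equation*}
(M_1 + M_2)^2 = M_1^2 + 2\,M_1 \cdot M_2 + M_2^2 \geq 0 + 2 \cdot 1 + 0 = 2,
\end{equation*}
using $M_i^2 \geq 0$ from the first part of the argument. This only gives $\geq 2$, so the main work is to improve the bound to $4$. The key is a parity observation: since $M_1 + M_2$ has no fixed components and $(M_1+M_2)^2 \geq 2 > 0$, it is nef and big, and by Remark~\ref{2-connectedness of a polarisation}/Lemma~\ref{numerically 2-connected} a general member of $|M_1 + M_2|$ is numerically $2$-connected, so its self-intersection is even and positive; but more sharply, on a K3 surface every divisor class $L$ satisfies $L^2 \equiv 0 \pmod 2$ by the adjunction/Riemann--Roch formula (the lattice is even). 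Thus $(M_1 + M_2)^2$ is an even integer that is at least $2$. To push from $2$ to $4$ I would rule out $(M_1+M_2)^2 = 2$: if it equalled $2$, then combined with $M_i^2 \geq 0$ and $M_1 \cdot M_2 \geq 1$ the only possibility is $M_1^2 = M_2^2 = 0$ and $M_1 \cdot M_2 = 1$; but then $M_1$ and $M_2$ are both elliptic-pencil multiples by case~(1)'s analysis, and two such classes satisfy $E_1 \cdot E_2 \geq 2$, forcing $M_1 \cdot M_2 \geq 2$ (for primitive multiples), contradicting $M_1 \cdot M_2 = 1$.

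The main obstacle I anticipate is the sharp step $2 \to 4$ in case~(2): the naive expansion stops at $2$, and closing the gap requires genuinely using both the evenness of the K3 lattice and the rigidity of elliptic classes (that distinct base-point-free genus-one pencils intersect in at least $2$). I would want to be careful that ``no fixed components'' is being used correctly to invoke base-point-freeness via Proposition~\ref{no fixed components}, and that the classification $M^2 = 0 \Rightarrow M \sim nE$ is applied only after establishing nefness. The interplay between numerical and linear equivalence on the K3 surface (automatic here since numerically trivial classes are trivial on a K3) should be invoked to pass from the numerical conclusion $E_1 \equiv E_2$ to the honest statement $M_1, M_2 \in \mathbb{Z}_{\geq 1} \cdot |E|$.
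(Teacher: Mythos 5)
Your two conclusions are the right ones, and your case (2) route (lattice parity plus rigidity of elliptic classes) is genuinely different from the paper's, but both of the steps your argument leans on hardest are not justified as written. In case (1), you rule out $M_1^2 > 0$ by asserting that a nef divisor with positive square ``meets every other effective curve positively.'' On a K3 surface this is false: a big and nef class can be orthogonal to a $(-2)$-curve, and this is precisely the gap between polarizations and quasi-polarizations that the whole paper is designed to handle. The conclusion $M_1 \cdot M_2 > 0$ is still true in your situation, but only because $M_2$ is itself nef (since $|M_2|$ has no fixed components, it is base-point free by Proposition~\ref{no fixed components}), and then the Hodge index theorem applies: $M_1^2 > 0$ and $M_1 \cdot M_2 = 0$ force $M_2^2 < 0$ or $M_2 \equiv 0$, both impossible for a nonzero nef effective class. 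Alternatively (and this is what the paper does) one applies Lemma~\ref{numerically 2-connected} to $M_1 + M_2$, whose square would be positive, to contradict $M_1 \cdot M_2 = 0$ directly.

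In case (2), your entire improvement from $(M_1+M_2)^2 \ge 2$ to $(M_1+M_2)^2 \ge 4$ hangs on the assertion that two distinct elliptic pencils on a K3 satisfy $E_1 \cdot E_2 \ge 2$. This is true but nowhere proved in your proposal, and it is not among the results recalled in the paper, so it cannot simply be cited. It does admit a short proof: if $E_1 \cdot E_2 = 1$, then $(E_1 - E_2)^2 = -2$, so by Riemann--Roch either $E_1 - E_2$ or $E_2 - E_1$ is effective, and pairing the effective one with the nef class $E_1$ (respectively $E_2$) gives $-1 \ge 0$, a contradiction; with this inserted, your argument closes. Compare this with the paper's proof, which needs none of it: since $|M_1+M_2|$ is base-point free with $(M_1+M_2)^2 > 0$, Bertini makes its general member irreducible, so Lemma~\ref{numerically 2-connected} says $M_1+M_2$ is numerically $2$-connected, whence $M_1 \cdot M_2 \ge 2$ and $(M_1+M_2)^2 \ge 2 M_1 \cdot M_2 \ge 4$ in one step --- no evenness of the lattice, no classification of isotropic classes. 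Indeed, the other natural proof of your elliptic-pencil fact is exactly this $2$-connectedness argument applied to $E_1 + E_2$, so once the gap is filled by the paper's own tools, your detour essentially collapses back into the paper's proof; the Riemann--Roch fix above is what keeps it an honestly independent argument.
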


	\begin{proof}
		By assumption $M_i$ have no fixed components, so $M_i^2 \geq 0$ and $(M_1 + M_2)^2 \geq 0$.

		 Suppose that $M_1 \cdot M_2 > 0$. In this case $(M_1 + M_2)^2 > 0$.
		 Moreover, the linear system $|M_1 + M_2|$ is base-point-free; hence, a general curve in $|M_1 + M_2|$ is irreducible by the theorem of Bertini,
		 and therefore it follows from Lemma~\ref{numerically 2-connected} that the divisor $M_1 + M_2$ is numerically 2-connected, i.e., $M_1 \cdot M_2 \geq 2$; we conclude that $(M_1 + M_2)^2 \geq 2M_1 \cdot M_2 \geq 4$.
		 This proves~\ref{case 2 lemma intersection of two divisors without fixed parts}.

		Now suppose that $M_1 \cdot M_2 = 0$. Note that in this case we must have $(M_1 + M_2)^2 = 0$. Indeed, if $(M_1 + M_2)^2 > 0$ then the divisor $M_1 + M_2$ is numerically 2-connected by Lemma~\ref{numerically 2-connected}, which contradicts the assumption.
		Hence, $M_1^2 = M_2^2 = 0$ and it follows from~\cite[Proposition 2.6]{72768972-f7b8-3f72-b8d3-5e64ee0f3017}
		that $M_1 \sim n_1E_1$ and $M_2 \sim n_2E_2$ where $E_i$ are smooth curves of genus 1 and $n_i \geq 1$. Finally, since $M_1 \cdot M_2 = 0$, we conclude that $E_1 \sim E_2$.
	\end{proof}

	The following elementary observation is also quite useful.

	\begin{lemma}
	\label{lem:h0-chi}
	If~$D > 0$ then~$h^0(D) \ge \chi(D)$.
	\end{lemma}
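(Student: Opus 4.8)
The plan is to compute $\chi(D)$ directly from the Hirzebruch--Riemann--Roch decomposition together with Serre duality on the K3 surface, and to show that the only term that could make $\chi(D)$ exceed $h^0(D)$ vanishes. Writing $\chi(D) = h^0(\mathcal{O}_X(D)) - h^1(\mathcal{O}_X(D)) + h^2(\mathcal{O}_X(D))$, the inequality $h^0(D) \ge \chi(D)$ will follow as soon as we know that $h^2(\mathcal{O}_X(D)) = 0$, since $h^1(\mathcal{O}_X(D)) \ge 0$ automatically.

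To control the $h^2$ term I would use that $X$ is a K3 surface, so $\omega_X \cong \mathcal{O}_X$ and Serre duality gives the identification $h^2(\mathcal{O}_X(D)) = h^0(\mathcal{O}_X(-D))$. The crucial point is then that the class $-D$ is not effective: if both $D$ and $-D$ were linearly equivalent to effective divisors, their sum $D + (-D) \sim 0$ would force each of them to be trivial, contradicting the hypothesis $D > 0$. Hence $h^0(\mathcal{O}_X(-D)) = 0$, and therefore $h^2(\mathcal{O}_X(D)) = 0$.

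Combining these observations, $\chi(D) = h^0(D) - h^1(D) \le h^0(D)$, as required. I do not expect any genuine obstacle in this argument; it is essentially a one-line consequence of Serre duality on a K3 surface. The only step meriting explicit mention is the vanishing $h^0(-D) = 0$, which rests on the elementary fact that a nonzero effective divisor cannot have an effective negative.
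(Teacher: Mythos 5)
Your argument is correct and is essentially identical to the paper's proof: both use Serre duality on the K3 surface to identify $h^2(D) = h^0(-D)$, observe that $h^0(-D) = 0$ since $D > 0$, and conclude $\chi(D) = h^0(D) - h^1(D) \le h^0(D)$. Your extra justification of the vanishing $h^0(-D)=0$ is a fine (if slightly more detailed) elaboration of the same step.
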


	\begin{proof}
	It follows from Serre duality that~$h^2(D) = h^0(-D)$, and since~$D > 0$, we have~$h^0(-D) = 0$.
	Therefore, we obtain~$\chi(D) = h^0(D) - h^1(D) \le h^0(D)$.
	\end{proof}

	The next lemma will be crucial for the proof of Proposition~\ref{conditions on a polarizing divisor} and for the proof of the main theorem.

	\begin{lemma}\label{sorting fixed components}
		Let~$(X,H)$ be a quasi-polarized K3 surface.
		Suppose that there is a decomposition
		\begin{equation*}
		H = D_1 + \dots + D_n + \Delta,
		\end{equation*}
		where~$n > 0$, $D_i > 0$ and~$\Delta \geq 0$. Then there is a decomposition
		\begin{equation*}
		H = \bar{D}_1 + \dots + \bar{D}_n
		\end{equation*}
		with~$\bar{D}_i > 0$ and~$\bar{D}_i^2 \geq D_i^2$.
		Moreover, if $h^0(D_i) \geq 2$ we can arrange $\bar{D}_i$ to satisfy $\bar{D}_i^2 \geq 0$.
	\end{lemma}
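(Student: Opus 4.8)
The plan is to choose, for each index $i$, a convenient \emph{seed} divisor $S_i > 0$ whose self-intersection already meets the required bound, and then to absorb the entire remaining effective part of $H$ into these seeds by an iterative transfer that never decreases the self-intersection of any seed. Concretely, set $S_i \coloneqq D_i$ whenever $D_i^2 \ge 0$. If $D_i^2 < 0$ and $h^0(D_i) \ge 2$, let $S_i \coloneqq M_i$ be the mobile part of $|D_i|$; since $h^0(D_i) \ge 2$ we have $M_i > 0$, and by Proposition~\ref{no fixed components} the system $|M_i|$ is base-point free, so $M_i$ is nef and $M_i^2 \ge 0 > D_i^2$. Finally, if $D_i^2 < 0$ and $h^0(D_i) = 1$, set $S_i \coloneqq D_i$. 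In every case $S_i > 0$ and $S_i^2 \ge D_i^2$; moreover $S_i^2 \ge 0$ whenever $h^0(D_i) \ge 2$ (if $D_i^2 \ge 0$ this is clear, and otherwise $S_i = M_i$). The leftover class is $R \coloneqq H - \sum_i S_i \ge 0$, and it equals $\Delta$ plus the fixed parts of those $D_i$ that were replaced by $M_i$.

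Next I would run the following transfer step, iterated until $R = 0$. Fix an ample class $A_0$ on $X$; the integer $A_0 \cdot R \ge 0$ will drop strictly at each step, so the procedure terminates. If $R > 0$ and $h^0(R) \ge 2$, let $M > 0$ be the mobile part of $|R|$, which is nef with $M^2 \ge 0$ by Proposition~\ref{no fixed components}; then $(S_1 + M)^2 = S_1^2 + 2\, S_1 \cdot M + M^2 \ge S_1^2$, so I replace $S_1$ by $S_1 + M$ and $R$ by its fixed part $R - M$. If instead $R > 0$ and $h^0(R) = 1$, then $R$ is rigid, and since $h^2(R) = h^0(-R) = 0$ Riemann--Roch gives $2 + \tfrac12 R^2 = \chi(R) \le h^0(R) = 1$, i.e. $R^2 \le -2 < 0$. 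As $H$ is nef, $0 \le H \cdot R = \sum_i S_i \cdot R + R^2$, whence $\sum_i S_i \cdot R \ge -R^2 > 0$, so there exist an index $i$ and an irreducible component $\Gamma$ of $R$ with $S_i \cdot \Gamma \ge 1$. Because $\Gamma$ is an irreducible curve on a K3 surface, $\Gamma^2 \ge -2$, and therefore $(S_i + \Gamma)^2 = S_i^2 + 2\, S_i \cdot \Gamma + \Gamma^2 \ge S_i^2 + 2 - 2 = S_i^2$; I replace $S_i$ by $S_i + \Gamma$ and $R$ by $R - \Gamma$.

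When $R = 0$ the seeds give the desired decomposition $H = \bar D_1 + \dots + \bar D_n$: each $\bar D_i > 0$, and since every step left all self-intersections non-decreasing, $\bar D_i^2 \ge S_i^2 \ge D_i^2$ for all $i$, with $\bar D_i^2 \ge S_i^2 \ge 0$ whenever $h^0(D_i) \ge 2$. This yields both assertions of the lemma at once.

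I expect the rigid case of the transfer step to be the main obstacle, and everything else to be routine bookkeeping. When $h^0(R) \ge 2$ one can always peel off the nef mobile part and deposit it anywhere at no cost, but when $R$ is rigid there is no mobile part to move, and a single curve $\Gamma$ can be transferred without loss only into a seed that it meets (so that $S_i \cdot \Gamma \ge 1$ compensates the term $\Gamma^2 \ge -2$). The existence of such a seed is precisely what is delivered by the nef-ness of $H$ combined with the strict negativity $R^2 \le -2$ forced by rigidity; isolating this dichotomy (mobile versus rigid leftover) is what makes the argument go through, and the termination via the strictly decreasing integer $A_0 \cdot R$ is immediate.
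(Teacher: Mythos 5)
Your proof is correct, and while it has the same overall shape as the paper's (an iterative absorption in which each transfer adds to some seed a piece $P$ with $S_i\cdot P\ge 1$ and $P^2\ge -2$, so no self-intersection ever drops, and the ``moreover'' part is handled identically by passing to mobile parts), the justification of the key step is genuinely different. The paper writes the leftover as a sum of irreducible curves and, at every stage, invokes numerical $1$-connectedness of $H$ (Remark~\ref{2-connectedness of a polarisation}, which rests on Bertini and Lemma~\ref{numerically 2-connected}) to produce a pair $(D_i,\Delta_j)$ with $D_i\cdot\Delta_j\ge 1$; termination is then a trivial induction on the number of components of $\Delta$. You instead split the leftover $R$ according to the dichotomy mobile/rigid: mobile parts are deposited for free (nefness of $M$ giving $S_1\cdot M\ge 0$ and $M^2\ge 0$), and in the rigid case you derive $R^2\le -2$ from Riemann--Roch (this is exactly Lemma~\ref{lem:h0-chi} in contrapositive form) and combine it with nefness of $H$ to get $\sum_i S_i\cdot R\ge -R^2>0$, which produces the required pair $(S_i,\Gamma)$. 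In effect you re-prove inline precisely the special case of numerical connectedness that is needed (decompositions $H=A+B$ with $B$ rigid), so your argument is more self-contained --- it avoids Bertini and the irreducibility of general members of $|H|$, using only nefness, Riemann--Roch, $\Gamma^2\ge -2$, and Proposition~\ref{no fixed components} --- at the cost of a longer case analysis and the extra bookkeeping with an ample class $A_0$ to guarantee termination.
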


	\begin{proof}
		We write $\Delta = \Delta_1 + \dots + \Delta_m$, where all $\Delta_i$ are irreducible curves and apply induction on $m$.

		If $m = 0$ then the decomposition $H = D_1 + \dots + D_n$ is the desired one.
		Suppose $m > 0$. Since $H$ is numerically $1$-connected (Remark~\ref{2-connectedness of a polarisation}), we have
		\begin{equation*}
		(D_1 + \dots + D_n) \cdot (\Delta_1 + \dots + \Delta_m) \geq 1.
		\end{equation*}
		Therefore, one can find $i$ and $j$ such that $D_i \cdot \Delta_j\geq 1$.
		Set $\widehat{D}_i = D_i + \Delta_j$ and note that
		\begin{equation*}
		\widehat{D}_i^2 = (D_i + \Delta_j)^2 \geq D_i^2 + 2D_i\cdot \Delta_j - 2 \geq D_i^2,
		\end{equation*}
		where the first inequality holds because $\Delta_j$ is an irreducible curve on a K3 surface,
		hence, $\Delta_j^2 \geq -2$ by \cite[(2.4)]{72768972-f7b8-3f72-b8d3-5e64ee0f3017}.
		Furthermore, for $k \neq i$  set $\widehat{D}_k = D_k$.
		Then the induction hypothesis applies to the decomposition
		$H = \widehat{D}_1 + \widehat{D}_2 + \dots + \widehat{D}_n  + (\Delta - \Delta_j)$.

		For the last part of the lemma,
		if $h^0(D_i) \geq 2$, we write $D_i = M_i + F_i$, where $F_i$ is the union of the fixed components of~$D_i$
		and~$M_i = D_i - F_i$ so that~$M_i^2 \ge 0$ and~$h^0(M_i) = h^0(D_i)$.
		Then, if $D_i^2 < 0$, we take $D_i' = M_i$ and $\Delta' = \Delta + F_i$ and, if $D_i^2 \ge 0$, we take $D_i' = D_i$ and $\Delta' = \Delta$ and apply the above argument
		to the decomposition $H = D_1 + \dots + D_i' + \dots + D_n + \Delta'$.
		Note that one has $D_i'^2 \geq D_i^2$ and $D_i'^2 \geq 0$, so the conditions of the lemma are satisfied.
	\end{proof}

	\subsection{Necessary conditions for Brill--Noether generality}\label{subsec The necessary conditions for Brill--Noether generality}

	The following proposition is the main result of this section.
	
	\begin{proposition}\label{conditions on a polarizing divisor}
		Let $(X,H)$ be a quasi-polarized K3 surface.
		Assume that there is a decomposition
		\begin{equation*}
		H = D_1 + \dots + D_n,
		\end{equation*}
		where $h^0(D_i) \geq 2$ for all $1 \leq i \leq n$ and either of the following conditions is satisfied:
		\begin{enumerate}[label={\textup{(\arabic*)}}]
			\item \label{case1} $n \geq 5$;
			\item \label{case2} $n = 4$, $D_1^2 \ge 2$ and~$D_2^2, D_3^2, D_4^2 \ge 0$;
			\item \label{case3} $n = 3$ and one of the conditions holds:
			\begin{enumerate}[label={\textup{(\alph*)}}]
				\item \label{case3a 222} $D_1^2 \geq D_2^2 \geq D_3^2 \ge 2$;
				\item \label{case3b 521} $D_1^2 \geq 8$, $D_2^2 = 2$, $D_3^2 = 0$;
				\item \label{case3c 331} $D_1^2 \geq D_2^2 \geq 4$, $D_3^2 = 0$.
			\end{enumerate}
		\end{enumerate}
		Then the K3 surface~$(X,H)$ is not Brill--Noether general.
	\end{proposition}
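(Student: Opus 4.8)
\textbf{The plan.} I aim to prove the contrapositive in explicit form: assuming $(X,H)$ \emph{is} Brill--Noether general, I will derive a contradiction from the given decomposition $H=D_1+\dots+D_n$. Being Brill--Noether general means that every effective decomposition $H=E_1+E_2$ satisfies $h^0(E_1)h^0(E_2)\le g$, where $g=g(X,H)=\tfrac12H^2+1$. The decompositions I will test are the groupings of the $D_i$ into two parts. By Proposition~\ref{no fixed components} and Lemma~\ref{sorting fixed components} I may first replace the $D_i$ by better representatives so that $D_i^2\ge 0$ (this is already built into the hypotheses in cases~\ref{case2} and~\ref{case3}, and is the only extra input needed in case~\ref{case1}), and, passing to base-point-free movable parts where necessary, so that the relevant intersection numbers $D_i\cdot D_j$ are nonnegative (a technical point handled by the fixed-component results). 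The one estimate used throughout is the elementary bound $h^0(E)\ge\tfrac12E^2+2$ for $E>0$ with $E^2\ge 0$ (Lemma~\ref{lem:h0-chi}), sharpened to the exact value $h^0(kE)=k+1$ when $E$ is an elliptic curve.

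\smallskip
\textbf{The main device: one-vs-rest groupings.} Write $d_i=\tfrac12D_i^2\ge0$, $t_{ij}=D_i\cdot D_j\ge0$, $D=\sum_i d_i$, $T=\sum_{i<j}t_{ij}$, and $\sigma_2=\sum_{i<j}d_id_j$, so that $g=D+T+1$. For each $i$ I consider the decomposition $E_1=D_i$, $E_2=\sum_{j\ne i}D_j$ and set $f_i=h^0(D_i)\,h^0(E_2)-g$; Brill--Noether generality gives $f_i\le 0$ for every $i$. Substituting $h^0(E)\ge\tfrac12E^2+2$ into each $f_i$ and summing over $i$, all the mixed terms cancel and I obtain
\[
0\ \ge\ \sum_i f_i\ \ge\ 2\sigma_2+\sum_{j<k}t_{jk}\bigl(D-d_j-d_k\bigr)+nD+(n-4)T+3n .
\]
Since $D-d_j-d_k=\sum_{l\ne j,k}d_l\ge0$, every summand on the right is nonnegative once $n\ge4$, while $3n>0$; this contradiction settles cases~\ref{case1} and~\ref{case2} at one stroke.

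\smallskip
\textbf{The case $n=3$.} Here the same computation leaves a dangerous term: the right-hand side becomes $2\sigma_2+t_{12}(d_3-1)+t_{13}(d_2-1)+t_{23}(d_1-1)+3D+9$. In case~\ref{case3a 222}, where $d_1,d_2,d_3\ge1$, each coefficient $d_l-1$ is nonnegative, so the expression is again strictly positive, a contradiction. When the smallest square vanishes (cases~\ref{case3b 521} and~\ref{case3c 331}, where $d_3=0$) the term $-t_{12}$ survives, so I switch to the sharper input: since $D_3^2=0$ and $h^0(D_3)\ge2$, Lemma~\ref{intersection of two divisors without fixed parts} gives $D_3\sim kE$ for an elliptic curve $E$ with $h^0(D_3)=k+1$. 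Writing $e_i=D_i\cdot E\ge0$ and testing the three groupings $D_3\mid D_1+D_2$ (now using $h^0(D_3)=k+1$) and $D_i\mid D_j+D_3$ for $i=1,2$, the failure of all three yields, after adding the last two, a lower bound on $t_{12}$ which, fed back into the first, forces $e_1+e_2$ to exceed a quantity governed by $d_1,d_2$ that is itself at least $e_1+e_2$. In case~\ref{case3c 331}, where $d_1,d_2\ge2$, this already produces the contradiction.

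\smallskip
\textbf{Expected main obstacle.} The delicate point is case~\ref{case3b 521}, where $d_2=1$ is too small for the clean cancellation above to close: the three groupings only show that $e_1=D_1\cdot E$ is very large rather than producing a contradiction. Resolving it should require an extra grouping that pairs $D_3$ with $D_1$ (which wins precisely when $e_1$ is large), combined with the full strength of the threshold $D_1^2\ge8$, and possibly a Hodge-index or $2$-connectedness estimate (Lemma~\ref{numerically 2-connected}) relating $e_1$ to $d_1$. The structural reason this is the hard part is uniform across the proof: the intersection numbers $t_{ij}$ and $e_i$ admit only \emph{lower} bounds from the Hodge index theorem, so the positivity of the summed expression must be extracted entirely from the square hypotheses and from the elliptic structure of the square-zero pieces, never from any a priori upper bound on the intersections.
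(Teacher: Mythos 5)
Your master inequality (summing the one--versus--rest tests and using $h^0\ge\chi$) is a genuinely nice device, and in the special case $n=4$ with all $D_i^2=0$ it is correct, sign-independent, and even stronger than what Proposition~\ref{conditions on a polarizing divisor} asserts. But in all other cases it rests on the unproved assumption that every pairwise intersection $D_i\cdot D_j$ is nonnegative, and this is not ``a technical point handled by the fixed-component results''. Effective divisors with nonnegative squares can meet negatively when they share fixed components: at the level of the intersection lattice take $F=R_1+R_2+R_3$ disjoint $(-2)$-classes, $E$ elliptic with $E\cdot R_i=1$, and $M$ nef with $M^2=4$, $M\cdot E=1$, $M\cdot F=1$; then $D_1=M+F$ and $D_2=E+F$ satisfy $D_1^2=D_2^2=0$ but $D_1\cdot D_2=-1$. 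You cannot remove this by passing to movable parts, because that destroys the exact equality $H=\sum D_i$ (a leftover fixed divisor $\Delta$ only increases $g$, which hurts your inequality), and redistributing $\Delta$ back via Lemma~\ref{sorting fixed components} controls the squares $\bar D_i^2$ but says nothing about the cross terms $\bar D_i\cdot\bar D_j$. The paper confronts exactly this issue head-on: Lemma~\ref{no negative intersections}\ref{it:condition-2} shows that a \emph{small} (in particular negative) pairwise intersection already violates Brill--Noether generality by itself, and only after this may one assume all $D_i\cdot D_j>0$. Your proposal has no substitute for that step, so the arguments for cases \ref{case1}, \ref{case2} and \ref{case3}\ref{case3a 222} are incomplete as written. (A smaller inaccuracy of the same flavor: from $D_3^2=0$ and $h^0(D_3)\ge2$ you cannot conclude $D_3\sim kE$; the divisor $D_3=E+R$ with $E\cdot R=1$ has square zero and $h^0=2$ but is not a multiple of an elliptic curve --- Lemma~\ref{intersection of two divisors without fixed parts} applies only to the movable part.)

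The more serious gap is the one you concede: case \ref{case3}\ref{case3b 521} is left unproved, and it is not peripheral --- in the paper both case~\ref{case2} (when $D_3\cdot D_4\ge4$) and, through case \ref{case3}\ref{case3c 331}, case~\ref{case1} are \emph{reduced} to the $n=3$ subcases, so \ref{case3b 521} and \ref{case3c 331} carry the whole proposition. The idea you are missing is that the three one--versus--rest tests should be combined quadratically rather than added. Setting $a_i=\tfrac12D_i^2+1$ and $x_{ij}=D_i\cdot D_j$ (all positive after the reduction above), the failure of all three tests gives, via Lemma~\ref{no negative intersections}\ref{it:condition-1}, the system $a_3x_{12}<x_{13}+x_{23}$, $a_2x_{13}<x_{12}+x_{23}$, $a_1x_{23}<x_{12}+x_{13}$; hence the quadratic form with matrix $\left(\begin{smallmatrix}a_3&-1&-1\\-1&a_2&-1\\-1&-1&a_1\end{smallmatrix}\right)$ is negative on the positive vector $(x_{12},x_{13},x_{23})$, and since its diagonal entries are positive and its $2\times2$ principal minors $a_ia_j-1$ are nonnegative, its determinant must be negative, i.e.\ $a_1a_2a_3-a_1-a_2-a_3-2<0$ as in~\eqref{eq:determinant}. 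Each subcase of \ref{case3} then contradicts this by a one-line computation (for \ref{case3b 521}: $a_1\ge5$, $a_2=2$, $a_3=1$ give $a_1a_2a_3-a_1-a_2-a_3-2=a_1-5\ge0$). This determinant criterion is precisely what closes case \ref{case3b 521}, where your linear summation cannot: with $d_1\ge4$, $d_2=1$, $d_3=0$ the coefficient of $t_{12}$ in your summed expression is $d_3-1=-1$ and $t_{12}$ admits no a priori upper bound, so the sum need not be positive. In short, your proposal shares the paper's starting point (one--versus--rest groupings estimated by Euler characteristics) but is missing its two key mechanisms: the elimination of small intersection numbers and the passage from linear to quadratic combination of the resulting inequalities.
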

	
	\begin{remark}\label{exceptional cases}
		One can also reformulate Proposition~\ref{conditions on a polarizing divisor} as necessary conditions for the Brill--Noether property.
		Namely, if the K3 surface~$(X,H)$ is Brill--Noether general and $H = D_1 + \dots + D_n$, where $n \geq 3$, $D_i > 0$, and $D_i^2 \geq 0$,
		then one of the following holds:

		\begin{enumerate}[label={\textup{(\arabic*)}}]
			\item
			\label{case:bn-4}
			$n=4$ and $D_1^2 = D_2^2 = D_3^2 = D_4^2 =0$;
			\item
			\label{case:bn-3}
			$n = 3$ and up to renumbering of the $D_i$, one of the following conditions holds:
			\begin{enumerate}[label={\textup{(\alph*)}}]
				\item $D_1^2 \in \{ 2,4,6\}$, $D_2^2 = 2$, $D_3 ^2 = 0$;
				\item $D_2^2 = 0$, $D_3 ^2 = 0$.
			\end{enumerate}
		\end{enumerate}
	\end{remark}

	Before proving the proposition, we prove a few auxiliary lemmas.
	We start with a computation on which most of the arguments of this section rely.

	\begin{lemma}\label{no negative intersections}
		Let $(X,H)$ be a quasi-polarized K3 surface and $H = D_1 + D_2 + D_3$, where $D_i>0$ and $D_i^2 \geq 0$.
		Suppose that one of the following two conditions holds:
		\begin{enumerate}[label={\textup{(\arabic*)}}]
		\item
		\label{it:condition-1}
		$(D_1 \cdot D_2) \left( \frac{1}{2}D_3^2 + 1 \right) \ge D_1 \cdot D_3 + D_2 \cdot D_3$, or
		\item
		\label{it:condition-2}
		$D_2 \cdot D_3 \leq 2 + \frac{1}{2}\left(D_1^2 + D_2^2 + D_3^2\right)$.
		\end{enumerate}
		Then~$\chi \left( D_1 + D_2 \right)\chi\left(D_3\right) > g$;
		in particular, the K3 surface $(X,H)$ is not Brill--Noether general.
	\end{lemma}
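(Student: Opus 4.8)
Here is how I would approach this lemma.

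The plan is to reduce everything to Riemann--Roch on the K3 surface. For any divisor $D$ one has $\chi(D) = \tfrac12 D^2 + 2$, and by definition $g = \tfrac12 H^2 + 1$. Writing $M = D_1 + D_2$, so that $H = M + D_3$, I would first record the identity
\begin{equation*}
\chi(M)\chi(D_3) - g = \left(\tfrac12 M^2\right)\left(\tfrac12 D_3^2\right) + \tfrac12 M^2 + \tfrac12 D_3^2 + 3 - M\cdot D_3,
\end{equation*}
obtained by expanding $\left(\tfrac12 M^2 + 2\right)\left(\tfrac12 D_3^2 + 2\right)$ and $\tfrac12(M + D_3)^2 + 1$ and subtracting. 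Thus $\chi(D_1+D_2)\chi(D_3) > g$ is equivalent to
\begin{equation*}
D_1\cdot D_3 + D_2\cdot D_3 \;=\; M\cdot D_3 \;<\; \left(\tfrac12 M^2 + 1\right)\left(\tfrac12 D_3^2 + 1\right) + 2 .
\end{equation*}
Once this is established, the final assertion is immediate: since $D_1+D_2 > 0$ and $D_3 > 0$, Lemma~\ref{lem:h0-chi} gives $h^0(D_1+D_2) \ge \chi(D_1+D_2)$ and $h^0(D_3) \ge \chi(D_3)$, so the decomposition $H = (D_1+D_2) + D_3$ violates Definition~\ref{definition of a BN general K3}. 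Throughout I will use the elementary inequality $\tfrac12 M^2 = \tfrac12 D_1^2 + \tfrac12 D_2^2 + D_1\cdot D_2 \ge D_1 \cdot D_2$, which holds because $D_i^2 \ge 0$.

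Case~\ref{it:condition-1} is then immediate. Setting $a = \tfrac12 M^2$ and $b = \tfrac12 D_3^2 \ge 0$, condition~\ref{it:condition-1} reads $(D_1\cdot D_2)(b+1) \ge D_1\cdot D_3 + D_2\cdot D_3$, and since $a \ge D_1\cdot D_2$ and $b+1 > 0$ we obtain
\begin{equation*}
D_1\cdot D_3 + D_2\cdot D_3 \le (D_1\cdot D_2)(b+1) \le a(b+1) < (a+1)(b+1) + 2,
\end{equation*}
which is precisely the equivalent inequality above.

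Case~\ref{it:condition-2} is where the real work lies. Feeding the bound on $D_2 \cdot D_3$ into the target inequality removes that cross term and leaves an estimate involving only $D_1 \cdot D_3$; concretely, it reduces to bounding $D_1\cdot D_3$ in terms of $\left(\tfrac12 M^2\right)\left(\tfrac12 D_3^2\right)$ and $D_1\cdot D_2$. The main obstacle is exactly this upper bound on $D_1 \cdot D_3$. The Hodge index theorem is of no help here, since for positive classes it only yields \emph{lower} bounds on intersection numbers; the required upper bound must instead come from the effectivity of the $D_i$ together with $H$ being big and nef. Accordingly I expect to invoke the numerical $2$-connectedness of $H$ (Remark~\ref{2-connectedness of a polarisation}) and the structure theory recalled in Subsection~\ref{subsec Linear Systems on K3 surfaces}, splitting into the case $D_3^2 > 0$ and the delicate boundary case $D_3^2 = 0$, where Lemma~\ref{intersection of two divisors without fixed parts} forces $D_3$ to be, up to its fixed part, a multiple of an elliptic pencil. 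Substituting the resulting bounds back into the displayed identity should then give $\chi(D_1+D_2)\chi(D_3) - g > 0$.
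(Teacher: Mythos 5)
Your identity, your reduction of the final assertion to Lemma~\ref{lem:h0-chi}, and your treatment of case~\ref{it:condition-1} are all correct and essentially coincide with the paper's argument. However, case~\ref{it:condition-2} --- which you yourself call ``where the real work lies'' --- is not proved: your last paragraph is a plan (``I expect to invoke\dots'', ``should then give\dots''), not an argument, so the proposal has a genuine gap at exactly the crucial step.

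Moreover, the plan cannot be completed as you set it up. You aim for an upper bound on $D_1\cdot D_3$, coming from effectivity and structure theory, strong enough to force $\chi(D_1+D_2)\chi(D_3)>g$ for the \emph{given} labeling; but no such bound exists, because in that form the inequality is false. Take an elliptic K3 surface with a section, $\Pic(X)\supseteq U=\langle s,f\rangle$ with $s^2=-2$, $f^2=0$, $s\cdot f=1$, and set
\begin{equation*}
D_1=s+f,\qquad D_2=f,\qquad D_3=5f,\qquad H=s+7f .
\end{equation*}
Then $H$ is ample, $D_i>0$, $D_i^2=0$, and condition~\ref{it:condition-2} holds (as $D_2\cdot D_3=0\le 2$), yet
\begin{equation*}
\chi(D_1+D_2)\chi(D_3)=3\cdot 2=6<7=g .
\end{equation*}
(The paper's own proof of case~\ref{it:condition-2} silently renumbers $D_2$ and $D_3$ --- the hypothesis is symmetric in them --- so what it really establishes is the displayed inequality up to that swap, which is all that is needed for the ``not Brill--Noether general'' conclusion used later.) The missing idea is therefore not a geometric bound on $D_1\cdot D_3$ but a relabeling combined with $2$-connectedness: after swapping $D_2$ and $D_3$ if necessary, assume $D_1\cdot D_2\ge D_1\cdot D_3$; Remark~\ref{2-connectedness of a polarisation} applied to the decomposition $H=D_1+(D_2+D_3)$ gives $D_1\cdot(D_2+D_3)\ge 2$, hence $D_1\cdot D_2\ge 1$, and so
\begin{equation*}
(D_1\cdot D_2)\left(\tfrac12 D_3^2+1\right)-D_1\cdot D_3\ \ge\ D_1\cdot D_2-D_1\cdot D_3\ \ge\ 0 .
\end{equation*}
Expanding $M^2=D_1^2+D_2^2+2D_1\cdot D_2$ in your identity and grouping, the difference $\chi(D_1+D_2)\chi(D_3)-g$ equals
\begin{equation*}
\tfrac14\left(D_1^2+D_2^2\right)D_3^2
+\left[\tfrac12\left(D_1^2+D_2^2+D_3^2\right)+2-D_2\cdot D_3\right]
+\left[(D_1\cdot D_2)\left(\tfrac12 D_3^2+1\right)-D_1\cdot D_3\right]+1 ,
\end{equation*}
where the three bracketed terms are nonnegative (the second by condition~\ref{it:condition-2}), giving the result for the new labeling and hence the failure of Brill--Noether generality. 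Finally, note that your suggested appeal to Lemma~\ref{intersection of two divisors without fixed parts} is not available here: the $D_i$ are merely effective with $D_i^2\ge 0$ and may well have fixed components.
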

	
	\begin{proof}
		We have~$\chi(D) = \frac{1}{2}D^2 + 2$ and $g = \frac{1}{2}H^2 + 1$;
		hence,~$\chi \left( D_1 + D_2 \right)\chi\left(D_3\right) - g$ can be rewritten as
		\begin{align*}
			&\left(\frac{1}{2}\left(D_1 + D_2\right)^2 + 2\right)\left(\frac{1}{2}D_3^2 + 2\right) - \frac{1}{2}(D_1 + D_2 +  D_3)^2 - 1
			\\
			&\qquad= \left(\frac{1}{2}\left(D_1 + D_2\right)^2 + 1 \right)\left(\frac{1}{2}D_3^2 + 1 \right) + \left(\frac{1}{2}D_3^2 + 1 \right) + \left(\frac{1}{2}\left(D_1 + D_2\right)^2 + 1\right) + 1
			\\
			&\qquad- \frac{1}{2}\left(\left(D_1 + D_2\right)^2 + D_3^2\right) - (D_1 + D_2) \cdot D_3 - 1
			\\
			&\qquad= \left(\frac{1}{2}(D_1 + D_2)^2 + 1\right)\left(\frac{1}{2}D_3^2 + 1 \right)
			+ 2 - D_1 \cdot D_3 -  D_2 \cdot D_3
			\\
			&\qquad= \frac{1}{4}\left(D_1^2 + D_2^2\right)D_3^2 + \frac{1}{2}\left(D_1^2 + D_2^2 + D_3^2\right)
			+ (D_1 \cdot D_2)\left(\frac{1}{2}D_3^2 + 1 \right) + 3 - D_1 \cdot D_3 -  D_2 \cdot D_3.
		\end{align*}

		The first and the second summands in the right-hand side are non-negative.
		Therefore, if~\ref{it:condition-1} is satisfied, the right-hand side is positive;
		hence,~$\chi \left( D_1 + D_2 \right)\chi\left(D_3\right) > g$.

		Assume~\ref{it:condition-2} holds.
		Renumbering~$D_2$ and~$D_3$ if necessary, we can assume that $D_1 \cdot D_2 \geq D_2 \cdot D_3$.
		Since by Remark~\ref{2-connectedness of a polarisation} every divisor in $|H|$  is numerically 2-connected,
		we have $(D_1 + D_3) \cdot D_2 \geq 2$ and it follows that~$D_1 \cdot D_2 \ge 1$; hence,
		\begin{equation*}
		(D_1 \cdot D_2)\left(\frac{1}{2}D_3^2 + 1 \right) - D_1 \cdot D_3 \ge D_1 \cdot D_2 - D_1 \cdot D_3 \ge 0.
		\end{equation*}
		Since also we have $\frac{1}{2}(D_1^2 + D_2^2 + D_3^2) + 2 - D_2 \cdot D_3 \ge 0$ by~\ref{it:condition-2},
		the right-hand side above is positive again; hence,~$\chi \left( D_1 + D_2 \right)\chi\left(D_3\right) > g$.
		
		Finally, using Lemma~\ref{lem:h0-chi} we conclude that~$h^0(D_1 + D_2)h^0(D_3) \geq \chi(D_1 + D_2)\chi(D_3) > g$
		and therefore the K3 surface $(X,H)$ is not Brill--Noether general.
	\end{proof}

	Now we deduce several corollaries of Lemma~\ref{no negative intersections}.

	\begin{lemma}\label{lemma for n = 4}
		Let $(X,H)$ be a quasi-polarized K3 surface and $H = D_1 + D_2 + D_3 + D_4$, where~$D_i > 0$.
		Suppose that the following two conditions hold:
		\begin{enumerate}[label={\textup{(\arabic*)}}]
			\item \label{lemma for n=4 condition 1} $D_1^2, D_2^2, D_3^2, D_4^2 \geq 0$;
			\item \label{lemma for n=4 condition 2} $D_2 \cdot D_3 \leq D_2 \cdot D_4 \leq D_3 \cdot D_4 \leq 1$.
		\end{enumerate}
		Then the K3 surface $(X,H)$ is not Brill--Noether general.
	\end{lemma}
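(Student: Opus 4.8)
The plan is to collapse the four-term decomposition into a three-term one and then invoke Lemma~\ref{no negative intersections}, condition~\ref{it:condition-2}. Concretely, for each of the six ways to merge a pair $\{D_i,D_j\}$ (keeping the other two divisors as singletons) I obtain a decomposition $H=(D_i+D_j)+D_k+D_l$ into three effective classes, and I would argue that one of these six decompositions meets the hypotheses of Lemma~\ref{no negative intersections}.

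First I would check that, for \emph{every} such merger, the resulting decomposition contains a pair of parts whose intersection is at most $2$. If the merged pair involves $D_1$, the two remaining singletons lie in $\{D_2,D_3,D_4\}$, so their intersection is $\le 1$ by hypothesis~\ref{lemma for n=4 condition 2}. If instead the merged pair is one of $\{D_2,D_3\},\{D_2,D_4\},\{D_3,D_4\}$, then the intersection of the merged class with the remaining divisor from $\{D_2,D_3,D_4\}$ is a sum of two of the numbers $D_2\cdot D_3,\,D_2\cdot D_4,\,D_3\cdot D_4$, hence $\le 2$. The two singleton parts always square to a non-negative number by hypothesis~\ref{lemma for n=4 condition 1}; so, as soon as the merged class $D_i+D_j$ also satisfies $(D_i+D_j)^2\ge 0$, all three self-intersections are $\ge 0$, the bound $2+\tfrac12(\cdots)\ge 2$ holds, and condition~\ref{it:condition-2} is automatically satisfied for the distinguished pair found above (after relabelling so that this pair plays the role of $(D_2,D_3)$ in Lemma~\ref{no negative intersections}).

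It then remains to produce a single merger whose merged class has non-negative self-intersection, and this is the one genuinely quantitative point. I would avoid a sign analysis of the individual numbers $D_i\cdot D_j$ and instead average over all six pairs, using that each $D_i^2$ occurs in exactly three of them:
\begin{equation*}
\sum_{i<j}(D_i+D_j)^2 = 3\sum_i D_i^2 + 2\sum_{i<j}D_i\cdot D_j = 2\sum_i D_i^2 + H^2 \ge H^2 > 0,
\end{equation*}
where the middle equality uses $H^2=\bigl(\sum_i D_i\bigr)^2=\sum_i D_i^2+2\sum_{i<j}D_i\cdot D_j$ and the last inequality uses that $H$ is big. Since the total is strictly positive, at least one summand $(D_i+D_j)^2$ is positive, hence non-negative.

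For that particular merger the decomposition $H=(D_i+D_j)+D_k+D_l$ has all three parts of non-negative self-intersection and, by the first step, a pair of intersection $\le 2$; Lemma~\ref{no negative intersections} via condition~\ref{it:condition-2} then gives that $(X,H)$ is not Brill--Noether general. The main obstacle is exactly the positivity of one merged self-intersection, and the averaging identity dispatches it cleanly; the rest is bookkeeping showing that the $\le 1$ bounds on the pairwise intersections among $D_2,D_3,D_4$ force every merger to carry a pair of intersection $\le 2$.
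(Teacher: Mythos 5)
Your proof is correct, and it shares its overall skeleton with the paper's argument: both collapse the four-term decomposition into a three-term one and invoke Lemma~\ref{no negative intersections}\ref{it:condition-2}, and in both cases the only genuinely quantitative issue is ensuring that the merged class has non-negative self-intersection. Where you diverge is in how that positivity is secured. The paper merges the specific pair $D_1, D_4$: by numerical $2$-connectedness of $H$ (Remark~\ref{2-connectedness of a polarisation}) one has $D_4 \cdot (D_1 + D_2 + D_3) \ge 2$, and hypothesis~\ref{lemma for n=4 condition 2} then forces $D_1 \cdot D_4 \ge 0$, whence $(D_1 + D_4)^2 \ge 0$; the pair $(D_2, D_3)$ with $D_2 \cdot D_3 \le 1$ then verifies condition~\ref{it:condition-2}. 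You instead average over all six mergers, using the identity $\sum_{i<j}(D_i+D_j)^2 = 2\sum_i D_i^2 + H^2 > 0$ to produce \emph{some} merger with positive square, and your preliminary bookkeeping (every merger leaves a pair of parts meeting in at most $2$) guarantees the lemma applies to whichever merger the averaging hands you. Your route is a bit longer but more elementary at the key step: it replaces the geometric input of $2$-connectedness, which ultimately rests on Saint-Donat's results, by the purely arithmetic facts $D_i^2 \ge 0$ and $H^2 > 0$; the price is that you lose control over which pair gets merged, which is exactly why you need the case analysis showing that every one of the six mergers carries a suitable low-intersection pair, whereas the paper's targeted choice makes that check a one-liner.
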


	\begin{proof}
		By 2-connectedness of $H$ we have $D_4 \cdot (D_1 + D_2 + D_3) \geq 2$.
		Then it follows from the assumption~\ref{lemma for n=4 condition 2} that $D_1 \cdot D_4 \geq 0$.
		We conclude that~$\left(D_1 + D_4\right)^2 \geq 0$
		and $D_2 \cdot D_3 \leq 1 < 2 + \frac{1}{2}\left(\left(D_1 + D_4\right)^2 + D_3^2 + D_2^2\right)$.
		Hence, Lemma~\ref{no negative intersections}\ref{it:condition-2}
		applies to the decomposition $H = (D_1 + D_4) + D_2 + D_3$ and implies that the K3 surface~$(X,H)$ is not Brill--Noether general.
	\end{proof}

	The next result applies in situations where we have a decomposition of~$H$ with many elliptic summands.

	\begin{lemma}\label{multiple elliptic curves}
		Let $(X,H)$ be a quasi-polarized K3 surface and suppose that $H = nE + D$, where $D>0$ and $E$ is an elliptic curve.
		Suppose that one of the following conditions holds
		\begin{enumerate}[label={\textup{(\arabic*)}}]
			\item \label{multiple elliptic curves condition 1} $n \geq 2$ and $D^2 \geq 0$;
			\item \label{multiple elliptic curves condition 2} $n \geq 4$.
		\end{enumerate}
		Then the K3 surface $(X,H)$ is not Brill--Noether general.
	\end{lemma}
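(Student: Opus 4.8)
The plan is to disprove Brill--Noether generality directly from Definition~\ref{definition of a BN general K3}, by producing a single decomposition $H = D_1 + D_2$ with $D_i > 0$ and $h^0(D_1)h^0(D_2) > g(X,H)$. The natural candidate, which I expect to dispatch both cases at once, is
\begin{equation*}
D_1 = E, \qquad D_2 = (n-1)E + D.
\end{equation*}
Both are effective and nonzero: $E > 0$, and $(n-1)E + D > 0$ since $n \geq 2$ and $D > 0$.

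Next I would set $d \coloneqq E \cdot D$ and record the intersection numbers, using $E^2 = 0$. One gets $H^2 = 2nd + D^2$, hence $g = nd + \tfrac12 D^2 + 1$, together with $D_2^2 = 2(n-1)d + D^2$. Since $E$ is irreducible and $D$ is effective, each component of $D$ meets $E$ non-negatively, so $d \geq 0$. Bounding cohomology by Euler characteristics via Lemma~\ref{lem:h0-chi} gives $h^0(E) \geq \chi(E) = 2$ and $h^0(D_2) \geq \chi(D_2) = (n-1)d + \tfrac12 D^2 + 2$, and therefore $h^0(D_1)h^0(D_2) \geq 2\chi(D_2)$ (this holds regardless of the sign of $\chi(D_2)$). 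Subtracting $g$, the entire problem reduces to the single inequality
\begin{equation*}
h^0(D_1)h^0(D_2) - g \;\geq\; (n-2)\,d + \tfrac12 D^2 + 3 \;>\; 0.
\end{equation*}

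It then remains to verify this inequality in each case. In case~\ref{multiple elliptic curves condition 1}, where $n \geq 2$ and $D^2 \geq 0$, all three summands $(n-2)d$, $\tfrac12 D^2$ and $3$ are non-negative (using $d \geq 0$ and $n \geq 2$), so the sum is at least $3$. In case~\ref{multiple elliptic curves condition 2}, where $n \geq 4$ but $D^2$ may be negative, the decisive extra input is nefness of $H$: since $H$ is big and nef and $D$ is effective, $0 \leq H \cdot D = nd + D^2$, so $D^2 \geq -nd$. Substituting this lower bound yields $(n-2)d + \tfrac12 D^2 + 3 \geq (n-2)d - \tfrac{n}{2}d + 3 = \tfrac{n-4}{2}\,d + 3 \geq 3$, again using $n \geq 4$ and $d \geq 0$. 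In both cases the inequality holds, so $(X,H)$ is not Brill--Noether general.

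The step I expect to be the crux is case~\ref{multiple elliptic curves condition 2}. The estimate $h^0(D_2) \geq \chi(D_2)$ is worthless on its own once $D^2$ is very negative---indeed, no decomposition that merely ``boosts'' one factor with copies of $E$ can succeed for arbitrarily negative $D^2$, since the leading $D^2$-terms of $h^0(D_1)h^0(D_2)$ and of $g$ are then driven apart. What rescues the argument is that effectivity of $D$ and nefness of $H$ force $D^2 \geq -nd$, and this bound is exactly strong enough to be absorbed precisely when $n \geq 4$. I would therefore foreground the inequality $H \cdot D \geq 0$ as the heart of the proof, with everything else being the bookkeeping above.
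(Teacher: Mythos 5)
Your proof is correct and is essentially the paper's own argument: both use the same decomposition $H = E + \bigl((n-1)E + D\bigr)$, the same bounds $h^0(E) \ge 2$ and $h^0\bigl((n-1)E+D\bigr) \ge \chi\bigl((n-1)E+D\bigr)$ via Lemma~\ref{lem:h0-chi}, and the same nefness input to handle a possibly negative $D^2$. The only difference is bookkeeping: the paper reduces case (2) to case (1) by regrouping $H = 2E + (H-2E)$ and checking $(H-2E)^2 = H\cdot(H-4E) \ge 0$, whereas you absorb the negative $D^2$ directly through $H \cdot D = nE\cdot D + D^2 \ge 0$, which unifies the two cases in a single computation.
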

	
	\begin{proof}
		If $n \geq 4$ then $H - 4E \geq 0$ and, since $H$ is numerically effective,	we have
		\begin{equation*}
		\left(H-2E\right)^2 = H^2 - 4H \cdot E = H \cdot(H-4E) \geq 0.
		\end{equation*}
		Hence, case~\ref{multiple elliptic curves condition 2} is a subcase of~\ref{multiple elliptic curves condition 1} with $D = H - 2E$.
		
		In case~\ref{multiple elliptic curves condition 1} we have $h^0(E) = 2$ and
		\begin{equation*}
		h^0 ( (n-1)E + D ) \geq \chi ( (n-1)E + D ) =
		\frac{1}{2}( (n-1)E + D )^2 + 2 = \frac{1}{2}D^2 + (n-1)E \cdot D + 2.
		\end{equation*}
		On the other hand, we have~$g = \frac{1}{2}(nE + D)^2 + 1 = \frac{1}{2}D^2 + nE \cdot D + 1$. Hence,
		\begin{equation*}
		h^0(E)h^0((n-1)E+D) \geq
		2\chi((n-1)E+D) =
		2\left(\frac{1}{2}D^2 + \left(n-1\right)E \cdot D + 2\right) > \frac{1}{2}D^2 + nE\cdot D + 1 = g,
		\end{equation*}
		where we used that $D^2 \geq \frac{1}{2}D^2$, $2(n-1) \geq n$ and $E \cdot D \geq 0$ because $|E|$ has no fixed components.
		The resulting inequality shows that the K3 surface~$(X,H)$ is not Brill--Noether general.
	\end{proof}

	We are now ready to prove Proposition~\ref{conditions on a polarizing divisor}.

	\begin{proof}[Proof of Proposition~\textup{\ref{conditions on a polarizing divisor}}]
	We consider all cases of the proposition starting from the last one.

	{\bf Case~\ref{case3}.}
		By Lemma~\ref{no negative intersections}\ref{it:condition-2}
		we can assume that the inequality $D_i \cdot D_j > 0$ holds for all $i \neq j$.
		To prove the proposition it is enough to show that in either of the subcases~\ref{case3a 222}, \ref{case3b 521}, or~\ref{case3c 331} we have
		\begin{equation}
		\label{eq:chi-h-di}
		\chi(H - D_i)\chi(D_i) - g > 0
		\end{equation}
		for some $i \in \{1,2,3\}$.
		
		Suppose for a contradiction that~\eqref{eq:chi-h-di} is not true for all~$i$.
		Denote $a_i = \frac{1}{2}D_i^2 + 1$ and~$x_{ij} = D_i \cdot D_j$. Then, since~$D_i^2 \geq 0$, it follows from
		Lemma~\ref{no negative intersections}\ref{it:condition-1} that the following inequalities hold simultaneously:
		\begin{align*}
			& a_3 x_{12} =\left(\frac{1}{2}D_3^2 + 1\right)\left(D_1\cdot D_2\right) < D_1 \cdot D_3 + D_2 \cdot D_3 = x_{13} + x_{23}, \\
			& a_2 x_{13} = \left(\frac{1}{2}D_2^2 + 1\right)\left(D_1\cdot D_3\right) < D_1 \cdot D_2 + D_2 \cdot D_3 = x_{12} + x_{23}, \\
			& a_1 x_{23} = \left(\frac{1}{2}D_1^2 + 1\right)\left(D_2\cdot D_3\right) < D_1 \cdot D_2 + D_1 \cdot D_3 = x_{12} + x_{13},
		\end{align*}
		Since~$x_{ij} > 0$, it follows that
		\begin{equation*}
		\begin{pmatrix}
		x_{12} & x_{13} & x_{23}
		\end{pmatrix}
		\begin{pmatrix}
		a_3 & -1 & -1 \\ -1 & a_2 & -1 \\ -1 & -1 & a_1
		\end{pmatrix}
		\begin{pmatrix}
		x_{12} \\ x_{13} \\ x_{23}
		\end{pmatrix}
		< 0,
		\end{equation*}
		i.e., the matrix in the middle cannot be positive semidefinite.
		Since on the other hand, its diagonal entries~$a_i$ are positive,
		and its principal 2-by-2 minors~$a_ia_j - 1$ are nonnegative,
		its determinant must be negative, i.e.,
		\begin{equation}
		\label{eq:determinant}
		a_1a_2a_3 - a_1 - a_2 - a_3 - 2 < 0.
		\end{equation}

		Now, in case~\ref{case3}\ref{case3a 222}, we have~$a_1 \ge a_2 \ge a_3 \ge 2$; hence,~$a_2a_3 \ge 4$ and
		\begin{equation*}
		a_1a_2a_3 - a_1 - a_2 - a_3 - 2 \ge 4a_1 - a_1 - a_2 - a_3 - 2 = (a_1 - a_2) + (a_1 - a_3) + (a_1 - 2) \ge 0
		\end{equation*}
		in contradiction to~\eqref{eq:determinant}.
		Similarly, in case~\ref{case3}\ref{case3b 521}, we have~$a_1 \ge 5$, $a_2 = 2$, $a_3 = 1$; hence,
		\begin{equation*}
		a_1a_2a_3 - a_1 - a_2 - a_3 - 2 = 2a_1 - a_1 - 5 = a_1 - 5 \ge 0
		\end{equation*}
		and in case~\ref{case3}\ref{case3c 331}, we have~$a_1, a_2 \ge 3$, $a_3 = 1$; hence,
		\begin{equation*}
		a_1a_2a_3 - a_1 - a_2 - a_3 - 2 = a_1a_2 - a_1 - a_2 - 3 = (a_1 - 1)(a_2 - 1) - 4 \ge 0,
		\end{equation*}
		again in contradiction to~\eqref{eq:determinant}.
		Therefore, one of the inequalities~\eqref{eq:chi-h-di} must hold, so that the K3 surface~$(X,H)$ is not Brill--Noether general by Lemma~\ref{lem:h0-chi}.
		
	{\bf Case~\ref{case2}.} Renumbering the $D_i$ if necessary, we may assume that
		\begin{equation*}
		D_1^2 \geq 2,
		\qquad
		D_2^2, D_3^2, D_4^2 \geq 0,
		\qquad\text{and}\qquad
		D_2 \cdot D_3 \leq D_2 \cdot D_4 \leq D_3 \cdot D_4.
		\end{equation*}

		By Lemma~\ref{lemma for n = 4} it is enough to consider the case
		where $D_3 \cdot D_4 \geq 2$; in particular, $(D_3 + D_4)^2 \geq 0$.

		First, suppose that $D_2 \cdot D_3 \leq D_2 \cdot D_4 \leq D_3 \cdot D_4 \leq 3$.
		Then we have
		
		\begin{equation*}
		D_2 \cdot (D_3 + D_4) \leq 3 + D_3 \cdot D_4 \leq
		2 + \frac{1}{2}\left(D_1^2 +D_2^2 + \left(D_3 + D_4\right)^2\right),
		\end{equation*}
		where the first inequality follows from the assumptions $D_2 \cdot D_3 \leq 3$ and~$D_2 \cdot D_4 \le D_3 \cdot D_4$,
		and the second inequality follows from the assumptions $D_1^2 \geq 2$ and~$D_2^2 \ge 0$
		and the inequality~$(D_3 + D_4)^2 \geq 0$ deduced above.
		Thus, Lemma~\ref{no negative intersections}\ref{it:condition-2} applies to the decomposition $H = D_1 + D_2 + (D_3 + D_4)$
		and we conclude that the K3 surface $(X,H)$ is not Brill--Noether general.

		Now, assume that $D_3 \cdot D_4 \geq 4$, hence~$(D_3 + D_4)^2 \geq 8$.
		Then, since~$D_1^2 \geq 2$ by assumption,
		we can apply~\ref{case3}\ref{case3b 521} of the proposition (proved above)
		to the decomposition $H = (D_3 + D_4) + D_1 + D_2$
		and it follows that the K3 surface $(X,H)$ is not Brill--Noether general.

	{\bf Case~\ref{case1}.}
		Since $h^0(D_i) \geq 2$, one can write $D_i = M_i + F_i$, where $M_i > 0$ has no fixed components,
		so that in particular~$M_i^2 \ge 0$ and $F_i \geq 0$ is a fixed divisor. Set $\Delta = F_1 + \dots + F_n$.

		We will show that we can renumber the~$M_i$ in such a way that~$(M_1 + M_2)^2 \ge 4$ and~$(M_3+M_4)^2 \ge 4$
		and thus reduce to case~\ref{case3}\ref{case3c 331} of the proposition.
		By Lemma~\ref{intersection of two divisors without fixed parts},
		we need to make sure that~$M_1 \cdot M_2 \ne 0$ and~$M_3 \cdot M_4 \ne 0$.
		To this end, we define an equivalence relation on the set of~$M_i$ by saying that $M_i$ is equivalent to $M_j$ if $M_i \cdot M_j = 0$;
		this is indeed an equivalence relation by Lemma~\ref{intersection of two divisors without fixed parts}.
		
		First, assume that one equivalence class of the $M_i$ contains at least four elements.
		Then, again by Lemma~\ref{intersection of two divisors without fixed parts},
		the corresponding four divisor classes are multiplies of the same elliptic curve $E$.
		Therefore, $H = nE + D$ with $n \geq 4$ and the K3 surface $(X,H)$ is not Brill--Noether general by Lemma~\ref{multiple elliptic curves}.
		
		Next, assume that every equivalence class has cardinality at most three.
		Then we can find two pairs of non-equivalent divisors.
		Indeed, if all the classes have cardinality one, we can take two arbitrary pairs.
		Otherwise, we take the first elements in each pair from an equivalence class with cardinality at least two
		and the second elements from its complement (which also has cardinality at least two).

		Renumbering, if necessary, we may assume the first pair is $M_1$, $M_2$ and the second pair is $M_3$, $M_4$,
		so that~$M_1 \cdot M_2 > 0$ and~$M_3 \cdot M_4 > 0$.
		Now, Lemma~\ref{intersection of two divisors without fixed parts} shows that $(M_1 + M_2)^2 \geq 4$ and $(M_3 + M_4)^2 \geq 4$.
		Therefore, applying Lemma~\ref{sorting fixed components} to the decomposition
		\begin{equation*}
		H = (M_1 + M_2) + (M_3 + M_4) + (M_5 + \dots + M_n) + \Delta
		\end{equation*}
		we obtain a decomposition $H = \bar{D}_1 + \bar{D}_2 + \bar{D}_3$ with $\bar{D}_1^2 \geq (M_1 + M_2)^2 \geq 4$, $\bar{D}_2^2 \geq (M_3 + M_4)^2 \geq 4$
		and $\bar{D}_3^2 \geq (M_5 + \dots + M_n)^2 \geq 0$.
		By case~\ref{case3}\ref{case3c 331} of the proposition (proved above)
		we conclude that the K3 surface $(X,H)$ is not Brill--Noether general.
	\end{proof}

	\section{Harmonic filtrations of Lazarsfeld bundles}\label{sec Lazarsfeld Bundles}

	In this section we use endomorphisms of non-simple Lazarsfeld bundles
	to construct a decomposition of the quasi-polarization~$H$ into a sum of effective divisor classes.
	We start by introducing a class of vector bundles on K3 surfaces containing and generalizing Lazarsfeld bundles.

	\begin{definition}
	\label{def:good-sheaf}
	We will say that a nonzero vector bundle~$F$ on a K3 surface~$X$ is \emph{balanced} if
		\begin{enumerate}[label={\textup{(\alph*)}}]
			\item
			\label{it:balanced-hi}
			$h^0(F) = h^1(F) = 0$ and
			\item
			\label{it:balanced-gg}
			$F^{\vee}$ is globally generated in codimension $1$.
		\end{enumerate}
	Note that property~\ref{it:balanced-gg} is equivalent to the surjectivity of the evaluation morphism
	\begin{equation}
	\label{eq:ev-f}
	H^0(F^\vee) \otimes \mathcal{O}_X \to F^\vee
	\end{equation}
	on a complement of a finite set.
	\end{definition}
	
	Recall from Section~\ref{sec Preliminaries} that for a sheaf $F$ on a K3 surface
	we denote by $v(F) = \langle r(F) , c_1(F), s(F) \rangle$ its Mukai vector,
	where $r(F) = \Rk{F}$ and $s(F) = ch_2(F) + r(F) = \chi(F) - r(F)$.

	\begin{lemma}\label{Mukai vector lemma}
		If $F$ is a balanced vector bundle on a K3 surface then $s(F) \geq 1$.
		Moreover, the dual of the evaluation morphism~$F \to H^0(F^\vee)^\vee \otimes \mathcal{O}_X$ is a monomorphism.
	\end{lemma}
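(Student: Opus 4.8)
The plan is to translate everything into the single number $h^0(F^\vee)$, produce the claimed monomorphism by dualizing the evaluation map, read off $s(F)\ge 0$, and then exclude the boundary case.

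First I would use balancedness to compute $s(F)$. By~\ref{it:balanced-hi} we have $h^0(F)=h^1(F)=0$, so $\chi(F)=h^2(F)$, and Serre duality on the K3 surface (where $\omega_X\cong\mathcal{O}_X$) gives $h^2(F)=h^0(F^\vee)$. Hence
\[
s(F)=\chi(F)-r(F)=h^0(F^\vee)-r(F),
\]
so that $s(F)\ge 1$ is equivalent to $h^0(F^\vee)\ge r(F)+1$. Write $\mathrm{ev}\colon H^0(F^\vee)\otimes\mathcal{O}_X\to F^\vee$ for the evaluation and $\mathrm{ev}^\vee\colon F\to H^0(F^\vee)^\vee\otimes\mathcal{O}_X$ for its dual. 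By~\ref{it:balanced-gg} there is a finite set $Z\subset X$ such that $\mathrm{ev}$ is surjective on $X\setminus Z$; over $X\setminus Z$ it is a surjection of locally free sheaves, so it splits locally and $\mathrm{ev}^\vee$ is a (locally split) monomorphism there. Thus $\ker(\mathrm{ev}^\vee)$ is supported on the finite set $Z$, but it is a subsheaf of the torsion-free sheaf $F$, so it vanishes. This proves the ``moreover'' statement. Consequently $\mathrm{ev}^\vee$ realizes $F$ as a subsheaf of $\mathcal{O}_X^{\oplus h^0(F^\vee)}$, which forces $r(F)\le h^0(F^\vee)$, and hence $s(F)\ge 0$.

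It remains to rule out the equality $s(F)=0$, i.e.\ $h^0(F^\vee)=r(F)=:r$. In that case the restriction $\mathrm{ev}|_{X\setminus Z}$ is a surjection between locally free sheaves of the same rank $r$, hence an isomorphism, so $F^\vee$---equivalently $F$---is trivial on $X\setminus Z$. Since $F$ is locally free it is reflexive, and a reflexive sheaf on the smooth surface $X$ agrees with the pushforward of its restriction to the complement of the codimension-$2$ set $Z$; therefore $F\cong\mathcal{O}_X^{\oplus r}$. As $r\ge 1$ this gives $h^0(F)=r\ge 1$, contradicting~\ref{it:balanced-hi}, and we conclude $s(F)\ge 1$. (Alternatively, one checks that $\mathrm{ev}$ is injective with finite-length cokernel, computes $c_1(F)=0$, and invokes Lemma~\ref{lem:cc}.) The formal parts---the monomorphism and $s(F)\ge 0$---follow mechanically from dualizing a codimension-$1$ surjection; the only delicate point, and the main obstacle, is the strict inequality, where the vanishing $h^0(F)=0$ (and not merely $h^1(F)=0$) is exactly what excludes the trivial bundle.
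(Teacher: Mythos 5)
Your proof is correct and follows essentially the same route as the paper: Serre duality plus the vanishing $h^0(F)=h^1(F)=0$ identifies $h^0(F^\vee)=r+s$, generic surjectivity of the evaluation gives $s\ge 0$ and (via torsion-freeness of $F$) the monomorphism, and the boundary case $s=0$ is excluded by showing $F\cong\mathcal{O}_X^{\oplus r}$, contradicting $h^0(F)=0$. The only cosmetic difference is how triviality of $F$ is deduced there: you restrict to $X\setminus Z$ and invoke that a reflexive sheaf is the pushforward of its restriction off a finite set, whereas the paper dualizes the exact sequence $0\to\mathcal{O}_X^{\oplus r}\to F^\vee\to B\to 0$ (with $B$ of finite length) to conclude $F^{\vee\vee}\cong\mathcal{O}_X^{\oplus r}$.
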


	\begin{proof}
		Let $v(F) = \langle r, -D, s \rangle$ be the Mukai vector of $F$.
		Since $h^0(F) = h^1(F) = 0$ by definition, we have $h^0(F^{\vee}) = h^2(F) = \chi(F) = r + s$.
		Therefore, the evaluation morphism~\eqref{eq:ev-f} takes the form~\mbox{$\mathcal{O}_X^{\oplus(r + s)} \longrightarrow F^{\vee}$};
		since by definition it is surjective in codimension 1, it follows that $s \geq 0$.

		Suppose that $s = 0$. Then we obtain an exact sequence
		\begin{equation}\label{qwe1}
			0\longrightarrow \mathcal{O}_X^{\oplus r} \longrightarrow F^{\vee} \longrightarrow B \longrightarrow 0,
		\end{equation}
		where $\dim\Supp{B} = 0$. Dualizing~\eqref{qwe1}, we see that $F^{\vee \vee} \cong \mathcal{O}_X^{\oplus r}$.
		But $F$ is reflexive; hence, $F \cong \mathcal{O}_X^{\oplus r}$ and it follows that~$h^0(F) = r> 0$, a contradiction.
		Thus, $s \geq 1$, as desired.

		Finally, since the evaluation morphism~\eqref{eq:ev-f} is generically surjective,
		its dual~$F \to H^0(F^\vee)^\vee \otimes \mathcal{O}_X$ is generically injective,
		and since~$F^\vee$ is torsion free, it is a monomorphism.
	\end{proof}

	Recall that a subsheaf~$G \subset F$ is called \emph{saturated} if the quotient~$F/G$ is torsion free
	(see, e.g., \cite[Definition~1.1.5]{huybrechts2010geometry}).
	Recall also that for every subsheaf~$G \subset F$, if~$T(F/G) \subset F/G$ is the torsion subsheaf of~$F/G$,
	then the \emph{saturation} of~$G$ in~$F$ is the sheaf
	\begin{equation}
	\label{eq:saturation}
	\widehat{G} \coloneqq \Ker(F \to (F/G)/T(F/G)).
	\end{equation}
	This is the unique saturated subsheaf of~$F$ containing~$G$ and such that~$\Rk{\widehat{G}} = \Rk{G}$.
	Note also that the definition~\eqref{eq:saturation} of~$\widehat{G}$
	implies an exact sequence~$0 \longrightarrow G \longrightarrow \widehat{G} \longrightarrow T(F/G) \longrightarrow 0$,
	and since the first Chern class of a torsion sheaf is always nonnegative, it follows that
	\begin{equation}
	\label{eq:saturation-c1}
	c_1(\widehat{G}) \ge c_1(G).
	\end{equation}
	Finally, recall that if~$F$ is a locally free sheaf on a smooth surface and~$G \subset F$ is saturated then~$G$ is locally free;
	in particular, the saturation of any subsheaf in~$F$ is locally free.

	\begin{lemma}\label{l0}
		Let $G \subset F$ be a nonzero subsheaf in a balanced vector bundle and let $\widehat{G}$ be its saturation.
		Denote~$D \coloneqq -c_1(G)$ and $\widehat{D} \coloneqq -c_1(\widehat{G})$. Then
		\begin{enumerate}[label={\textup{(\arabic*)}}]
			\item
			\label{it:case-zero-h0}
			$h^0(G) = h^0(\widehat{G}) = 0$;
			\item
			\label{it:case-global-generation}
			$\widehat{G}^\vee$ is globally generated away from a finite set;
			\item
			\label{it:h0}
			$D \geq \widehat{D} > 0$, the linear system $|\widehat{D}|$ is base-point-free and $h^0(D) \geq h^0(\widehat{D}) \geq 2$.
		\end{enumerate}
	\end{lemma}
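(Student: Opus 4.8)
The plan is to establish the three claims in order, exploiting the defining surjectivity of evaluation morphisms for balanced bundles and invoking the structural inputs available earlier: Lemma~\ref{lem:cc} on subsheaves of trivial bundles, Lemma~\ref{Mukai vector lemma} (the dual evaluation is a monomorphism), Proposition~\ref{no fixed components} (Saint-Donat), and the bound~\eqref{eq:saturation-c1}. Claim~\ref{it:case-zero-h0} is immediate: both $G$ and $\widehat{G}$ are subsheaves of $F$, so the inclusions induce injections $H^0(G) \hookrightarrow H^0(F)$ and $H^0(\widehat{G}) \hookrightarrow H^0(F)$, and since $h^0(F) = 0$ by property~\ref{it:balanced-hi} of balancedness, both groups vanish.

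For~\ref{it:case-global-generation} the idea is to transport the global generation of $F^\vee$ to $\widehat{G}^\vee$. I would dualize the saturation sequence $0 \to \widehat{G} \to F \to F/\widehat{G} \to 0$. Since $F$ is locally free and $F/\widehat{G}$ is torsion-free (this is exactly saturatedness), the cokernel of $F^\vee \to \widehat{G}^\vee$ is $\mathcal{E}xt^1(F/\widehat{G}, \mathcal{O}_X)$, which is supported in codimension $\ge 2$ because a torsion-free sheaf on a smooth surface is locally free off a finite set. Hence $F^\vee \to \widehat{G}^\vee$ is surjective on the complement of a finite set, and composing with the evaluation $H^0(F^\vee) \otimes \mathcal{O}_X \to F^\vee$ (surjective off a finite set by property~\ref{it:balanced-gg}) and factoring through $H^0(\widehat{G}^\vee) \otimes \mathcal{O}_X$, one concludes that $H^0(\widehat{G}^\vee) \otimes \mathcal{O}_X \to \widehat{G}^\vee$ is surjective off a finite set.

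For~\ref{it:h0}, the inequality $D \ge \widehat{D}$ is immediate from~\eqref{eq:saturation-c1}, and for the remaining assertions I would pass to the determinant. Writing $r = \Rk(\widehat{G})$ and taking $\wedge^r$ of the evaluation produced in~\ref{it:case-global-generation} shows that $\mathcal{O}_X(\widehat{D}) = \det\widehat{G}^\vee$ is globally generated off a finite set; in particular $\widehat{D}$ is effective and $\Bs|\widehat{D}|$ is finite. As a finite set contains no curve, $|\widehat{D}|$ has no fixed components, so by Proposition~\ref{no fixed components} it is base-point-free. To see $\widehat{D} \ne 0$, I would pick $r$ sections of $\widehat{G}^\vee$ spanning the fiber at a general point, giving $\psi \colon \mathcal{O}_X^{\oplus r} \to \widehat{G}^\vee$ that is a generic isomorphism, so $\det\psi$ is a nonzero section of $\mathcal{O}_X(\widehat{D})$. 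Were $\widehat{D} = 0$, this $\det\psi$ would be a nonzero constant, hence nowhere vanishing, forcing $\psi$ to be an isomorphism; then $\widehat{G}^\vee \cong \mathcal{O}_X^{\oplus r}$, so $\widehat{G} \cong \mathcal{O}_X^{\oplus r}$ and $h^0(\widehat{G}) = r > 0$, contradicting~\ref{it:case-zero-h0}. Thus $\widehat{D} > 0$. Finally, a nonzero base-point-free class has $h^0 \ge 2$ (if $h^0 = 1$ the unique member would be nowhere vanishing, i.e.\ $\widehat{D} = 0$), and $h^0(D) \ge h^0(\widehat{D})$ because $D - \widehat{D} \ge 0$.

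The main obstacle is precisely the non-vanishing $\widehat{D} \ne 0$ in~\ref{it:h0}: global generation in codimension~$1$ together with Proposition~\ref{no fixed components} yields base-point-freeness effortlessly but does not on its own exclude $\widehat{D} = 0$. The determinant-to-isomorphism argument above is what rules it out; alternatively one could verify that $F$, and hence $\widehat{G}$, is \emph{saturated} in the trivial bundle $\mathcal{O}_X^{\oplus h^0(F^\vee)}$ of Lemma~\ref{Mukai vector lemma} (the cokernel of $F \hookrightarrow \mathcal{O}_X^{\oplus h^0(F^\vee)}$ being torsion-free by a reflexivity computation) and then apply the second assertion of Lemma~\ref{lem:cc}.
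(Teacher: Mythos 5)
Your proof is correct, and for parts~\ref{it:case-zero-h0}, \ref{it:case-global-generation}, and most of~\ref{it:h0} it coincides with the paper's own: the same dualization of the saturation sequence with $\mathcal{E}xt^1(F/\widehat{G},\mathcal{O}_X)$ finitely supported, the same exterior-power trick making $\mathcal{O}(\widehat{D}) = \Det \widehat{G}^\vee$ globally generated off a finite set, and the same appeal to Proposition~\ref{no fixed components} for base-point-freeness, plus~\eqref{eq:saturation-c1} for $D \ge \widehat{D}$. The one place you genuinely diverge is the nonvanishing $\widehat{D} > 0$, which you correctly single out as the crux. The paper handles it by embedding $\widehat{G} \subset F \subset H^0(F^\vee)^\vee \otimes \mathcal{O}_X$ (Lemma~\ref{Mukai vector lemma}) and citing Lemma~\ref{lem:cc}: if $\widehat{D} = 0$ then $\widehat{G} \cong \mathcal{O}_X^{\oplus m}$, contradicting $h^0(\widehat{G}) = 0$. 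Note, however, that the ``moreover'' clause of Lemma~\ref{lem:cc} requires $\widehat{G}$ to be saturated in the \emph{trivial} bundle, not merely in $F$; this is precisely the verification you sketch in your closing remark (the cokernel of $F \hookrightarrow \mathcal{O}_X^{\oplus h^0(F^\vee)}$ is torsion-free, since dualizing the evaluation sequence identifies it with a subsheaf of the torsion-free sheaf $K^\vee$, where $K = \Ker(H^0(F^\vee)\otimes\mathcal{O}_X \to F^\vee)$, and saturatedness of $\widehat{G}$ in the trivial bundle then follows from saturatedness in $F$). Your primary argument instead trivializes $\widehat{G}^\vee$ generically by $r$ sections and observes that if $\widehat{D} = 0$ the determinant of this map is a nonzero constant, forcing $\widehat{G}^\vee \cong \mathcal{O}_X^{\oplus r}$ and hence $h^0(\widehat{G}) = r > 0$, a contradiction. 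This determinant argument is self-contained, bypasses Lemma~\ref{lem:cc} entirely, and is in fact more careful than the paper's wording, which invokes Lemma~\ref{lem:cc} without checking its saturation hypothesis; both routes reach the same conclusion, yours trading a citation for an elementary computation.
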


	\begin{proof}
		We have~$G \subset \widehat{G} \subset F$ by definition of~$G$ and~$\widehat{G}$
		and~$h^0(F) = 0$ by definition of a balanced bundle, so~\ref{it:case-zero-h0} follows immediately.

		Denote $E = F/\widehat{G}$, it is a torsion free sheaf and there is an exact sequence:
		\begin{equation*}
			0 \longrightarrow \widehat{G} \longrightarrow F \longrightarrow E \longrightarrow 0.
		\end{equation*}
		Dualazing it, we obtain an exact sequence
		\begin{equation*}
			0 \longrightarrow E^{\vee} \longrightarrow F^{\vee} \longrightarrow \widehat{G}^{\vee} \longrightarrow
			\mathcal{E}xt^1(E, \mathcal{O}_X) \longrightarrow 0,
		\end{equation*}
		and $Z :=\Supp(\mathcal{E}xt^1(E, \mathcal{O}_X))$ is a finite set because $E$ is torsion free.
		Since~$F^{\vee}$ by definition is globally generated away from a finite set, say, $Z_F$,
		we conclude that~$\widehat{G}^{\vee}$ is globally generated away from $Z \bigcup Z_F$, which is also a finite set.
		This proves~\ref{it:case-global-generation}.

		To prove~\ref{it:h0}, note that the map~$H^0(\widehat{G}^{\vee}) \otimes \mathcal{O}_X \longrightarrow G^{\vee}$
		is surjective in codimension 1 (by part~\ref{it:case-global-generation}),
		hence the same is true for its exterior power
		\begin{equation}\label{qwe111}
			\bigwedge\nolimits^m H^0(\widehat{G}^{\vee}) \otimes \mathcal{O}_X \longrightarrow \Det{\widehat{G}^{\vee}} \cong \mathcal{O}(\widehat{D}),
		\end{equation}
		where~$m = \Rk \widehat{G}$.
		Thus, the line bundle~$\mathcal{O}(\widehat{D})$ is also globally generated away from a finite set;
		in particular, $\widehat{D} \geq 0$, and the linear system~$|\widehat{D}|$ is base-point-free by Proposition~\ref{no fixed components}.

		To prove that~$\widehat{D} > 0$, note that~$\widehat{G} \subset F \subset H^0(F^\vee)^\vee \otimes \mathcal{O}_X$
		by Lemma~\ref{Mukai vector lemma}.
		Therefore, if~$\widehat{D} = 0$ then~$\widehat{G} \cong \mathcal{O}_X^{\oplus m}$ by Lemma~\ref{lem:cc},
		in contradiction to~$h^0(\widehat{G}) = 0$.

		Finally, as the linear system $|\widehat{D}|$ has no fixed components and $\widehat{D} > 0$,
		we have $h^0(\widehat{D}) \geq 2$.
		On the other hand, \eqref{eq:saturation-c1} implies~$-\widehat{D} \ge -D$, hence~$D \ge \widehat{D}$,
		and we conclude that~$h^0(D) \geq h^0(\widehat{D})$.
	\end{proof}

	\begin{lemma}\label{Mukai vector of a kernel}
		Let~$F$ be a balanced vector bundle.
		If~$G \subset F$ is a subsheaf such that~$F/G$ is torsion free and~$h^0(F/G) = 0$
		then~$G$ is a balanced vector bundle.
	\end{lemma}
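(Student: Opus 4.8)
The plan is to verify the two defining properties of a balanced bundle for $G$: that $G$ is a vector bundle, that $h^0(G) = h^1(G) = 0$, and that $G^\vee$ is globally generated in codimension $1$. The hypothesis that $F/G$ is torsion free means precisely that $G$ is saturated in $F$, and since $F$ is locally free on a smooth surface, this immediately gives that $G$ is locally free, as recalled just before Lemma~\ref{l0}. So the real content is controlling the cohomology of $G$ and the global generation of $G^\vee$.

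First I would establish $h^0(G) = h^1(G) = 0$. The vanishing $h^0(G) = 0$ is automatic: $G \subset F$ and $h^0(F) = 0$ by property~\ref{it:balanced-hi} of a balanced bundle. For $h^1(G) = 0$ I would use the short exact sequence
\begin{equation*}
0 \longrightarrow G \longrightarrow F \longrightarrow F/G \longrightarrow 0
\end{equation*}
and pass to the long exact sequence in cohomology. This gives
\begin{equation*}
H^0(F/G) \longrightarrow H^1(G) \longrightarrow H^1(F).
\end{equation*}
Here $h^0(F/G) = 0$ by hypothesis and $h^1(F) = 0$ by property~\ref{it:balanced-hi}, so $h^1(G) = 0$ as needed.

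Next I would check that $G^\vee$ is globally generated away from a finite set. Since $G$ is saturated in the locally free sheaf $F$, this is exactly the situation handled by part~\ref{it:case-global-generation} of Lemma~\ref{l0}: for a nonzero subsheaf of a balanced bundle, the dual of its saturation is globally generated away from a finite set, and $G$ is its own saturation here. Concretely, dualizing the exact sequence above and using that $F/G$ is torsion free (so $\mathcal{E}xt^1(F/G,\mathcal{O}_X)$ is supported on a finite set) yields a surjection $F^\vee \to G^\vee$ off a finite set, and composing with the generic surjectivity of $H^0(F^\vee)\otimes\mathcal{O}_X \to F^\vee$ from property~\ref{it:balanced-gg} gives the generic surjectivity of $H^0(G^\vee)\otimes\mathcal{O}_X \to G^\vee$.

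The main subtlety to handle carefully is simply making sure $G$ is nonzero and genuinely a bundle rather than tacitly assuming it; once saturatedness is invoked to get local freeness, the two cohomological vanishings and the codimension-$1$ global generation all follow directly from the balanced hypotheses on $F$ together with the assumed torsion-freeness and $h^0$-vanishing of the quotient. I expect no real obstacle here: this lemma is essentially a repackaging of the relevant parts of Lemma~\ref{l0} and the definition of a balanced bundle, and its role is to let one build harmonic filtrations whose successive subsheaves remain balanced.
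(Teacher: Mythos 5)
Your proof is correct and follows essentially the same route as the paper's: $h^0(G)=0$ from $G\subset F$, $h^1(G)=0$ from the long exact sequence using $h^0(F/G)=h^1(F)=0$, and codimension-$1$ global generation of $G^\vee$ from Lemma~\ref{l0}\ref{it:case-global-generation} applied to the saturated subsheaf $G=\widehat{G}$. Your explicit remark that saturatedness gives local freeness of $G$ is a point the paper leaves implicit, but the argument is the same.
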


	\begin{proof}
		The vanishing~$h^0(G) = 0$ is proved in Lemma~\ref{l0}\ref{it:case-zero-h0}
		and~$h^1(G) = 0$ follows from~$h^1(F) = 0$ (by definition of a balanced bundle) and~$h^0(F/G) = 0$ (by assumption).
		Finally, $G = \widehat{G}$ is globally generated away from a finite set by Lemma~\ref{l0}~\ref{it:case-global-generation}.
	\end{proof}	
	
	Now we axiomatize the properties of the filtration that we are going to construct.
	
	\begin{definition}
	\label{def:good-filtration}
		Let $F$ be a vector bundle and let $F = F_1 \supsetneq F_2 \supsetneq \dots \supsetneq F_{n} \supsetneq F_{n+1} = 0$ be its filtration.
			We will say that a filtration $F_{\bullet}$ is \emph{harmonic} if for all $1 \leq i \leq n$ one has
		\begin{enumerate}[label={\textup{(\arabic*)}}]
			\item
			\label{filt:gi}
			$F_i/F_{i+1}$ is a simple torsion free sheaf
			and if~$i \neq n$ then~$\Rk{(F_i/F_{i+1})} \leq \frac{1}{2}\Rk{F_i}$;
			\item
			\label{filt:cgi}
			if $D_i := -c_1(F_i/F_{i+1})$ then $D_i > 0$ and $h^0(D_i) \geq 2$.
		\end{enumerate}
	\end{definition}
	
	To construct a harmonic filtration on balanced bundles we use the following two lemmas.
	
	\begin{lemma}\label{simple image rk leq 1/2}
		Let $F$ be a torsion free sheaf and suppose that $F$ has a nonscalar endomorphism. Then $F$ has a $($nonzero$)$ endomorphism $\phi$ such that the sheaf $\Image{\phi}$ is simple and $\Rk{\Image{\phi}} \leq \frac{1}{2}\Rk{F}$.
	\end{lemma}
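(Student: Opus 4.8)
The plan is to choose $\phi$ to be a nonzero endomorphism whose image has the \emph{smallest possible} rank, and then to verify that minimality alone forces this image to be simple and of rank at most $\tfrac12\Rk F$. Throughout I use that $A \coloneqq \End(F)$ is a finite-dimensional algebra over the base field $k$ and that, since $F$ is torsion free on the integral surface $X$, restriction to the generic point $\eta$ embeds $A$ as a $k$-algebra into $\End_{k(X)}(F_\eta)$; for $\phi \in A$ I write $\rho(\phi) \coloneqq \Rk \Image\phi$, which is the rank of the linear map $\phi_\eta$ over the function field $k(X)$.

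The first step is to produce at least one endomorphism with $\rho < \Rk F$. For this I would prove the general principle that if every nonzero endomorphism of a torsion free sheaf $E$ is generically an isomorphism, i.e.\ $\rho(\psi) = \Rk E$ for all $0 \ne \psi \in \End E$, then $\End E$ has no zero divisors (a relation $\psi_1\psi_2 = 0$ would give invertible $(\psi_1)_\eta,(\psi_2)_\eta$ with zero product); being a finite-dimensional domain over the algebraically closed field $k$ it is then a division algebra equal to $k$, so $E$ is simple. Applying the contrapositive to $F$, which is not simple since it has a nonscalar endomorphism, yields a nonzero endomorphism with $\rho < \Rk F$. I then let $\phi_0$ be a nonzero endomorphism with $m \coloneqq \rho(\phi_0)$ minimal, so that $1 \le m < \Rk F$, set $G \coloneqq \Image\phi_0$, and factor $\phi_0 = \iota \circ p$ with $p \colon F \twoheadrightarrow G$ and $\iota \colon G \hookrightarrow F$.

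Simplicity of $G$ follows from the same principle: were $G$ non-simple, it would carry a nonzero $\alpha \in \End G$ with $\Rk \Image\alpha < m$, and then $\iota\,\alpha\,p$ would be a nonzero endomorphism of $F$ with image $\iota(\Image\alpha)$ of rank $< m$, contradicting minimality. For the rank bound I would use that $p \circ \iota \in \End G$ is scalar, say $p\iota = c \cdot \Id_G$. If $c = 0$ then $\phi_0^2 = \iota\,(p\iota)\,p = 0$, so $\Image\phi_0 \subseteq \Ker\phi_0$ and hence $m \le \Rk F - m$. If $c \ne 0$ then $c^{-1}\phi_0$ is idempotent with image $G$, whence $F \cong G \oplus \Ker\phi_0$; the complementary projection is then a nonzero endomorphism of image-rank $\Rk F - m$, and minimality of $m$ forces $\Rk F - m \ge m$. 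In both cases $2m \le \Rk F$, so $\phi_0$ is the desired endomorphism.

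The routine inputs are the finite-dimensionality of $\End(F)$ and the rank additivity $\Rk F = \Rk\Ker\phi_0 + \Rk\Image\phi_0$. The two genuinely load-bearing steps are the division-algebra argument, which converts non-simplicity into the existence of a rank-dropping endomorphism, and the halving estimate; I expect the latter to be the main obstacle, the delicate point being the dichotomy $p\iota = c\,\Id_G$ — exactly where simplicity of $G$ enters — together with the two different mechanisms (square-zero versus idempotent) by which it yields $2m \le \Rk F$.
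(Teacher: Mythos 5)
Your proof is correct, and it takes a genuinely different route from the paper's. The paper argues by induction on $\Rk F$: it first produces some nonzero $\psi$ with $\Rk\Image\psi \le \frac{1}{2}\Rk F$ --- the projection onto the smaller summand if $F$ is decomposable, and a power $\bar\psi^{k-1}$ of a nilpotent endomorphism (whose image lands inside its kernel) if $F$ is indecomposable --- and then, if $\Image\psi$ fails to be simple, it composes $\psi$ with an endomorphism of $\Image\psi$ furnished by the induction hypothesis to repair simplicity. You instead take $\phi_0$ of minimal image rank $m$ among all nonzero endomorphisms: minimality yields simplicity of $G=\Image\phi_0$ with no recursion (your $\iota\alpha p$ argument is sound, since $p$ epi and $\iota$ mono give $\Rk\Image(\iota\alpha p)=\Rk\Image\alpha<m$), and the halving bound falls out of the dichotomy $p\iota = c\,\Id_G$: if $c=0$ then $\phi_0^2=0$ and $\Image\phi_0\subseteq\Ker\phi_0$, while if $c\ne 0$ then $c^{-1}\phi_0$ is idempotent, $F\cong G\oplus\Ker\phi_0$, and the complementary projection is nonzero (as $m<\Rk F$) with image rank $\Rk F-m$, so minimality gives $\Rk F-m\ge m$. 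Note that your two cases (square-zero versus idempotent) mirror the paper's two cases (indecomposable versus decomposable), but in your argument the dichotomy is deduced from simplicity of $G$ rather than assumed at the outset. Both proofs consume the same background facts, namely finite-dimensionality of $\End(F)$ and algebraic closedness of the ground field: these enter your argument through the step ``a finite-dimensional algebra without zero divisors over $k=\bar k$ equals $k$'', which is what converts non-simplicity into a rank-dropping endomorphism, whereas they enter the paper's argument through the existence of a nonzero nilpotent endomorphism of a non-simple indecomposable sheaf. What your approach buys is a self-contained, non-recursive argument in which simplicity comes for free; what the paper's buys is that it leans only on standard Krull--Schmidt-type facts and never needs to pass to the generic fiber.
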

	
	\begin{proof}
		We argue by induction on $\Rk{F}$. First, note that there is a (nonzero) endomorphism $\psi : F\longrightarrow F$ such that $\Rk{\psi} \leq \frac{1}{2}\Rk{F}$. Indeed, if $F$ is decomposable, i.e., $F = F_1 \bigoplus F_2$ and $F_i \neq 0$,
		then without loss of generality we can assume that $\Rk{F_1} \leq \frac{1}{2}\Rk{F}$.
		In this case take $\psi$ to be the projection to $F_1$.
		If $F$ is indecomposable then it necessarily has a (nonzero) nilpotent endomorphism $\bar{\psi}$.
		Consider $k$ such that~$\bar{\psi}^k =0$ and $\bar{\psi}^{k-1} \neq 0$,
		so that $\Image{\bar{\psi}^{k-1}} \hookrightarrow \Ker{\bar{\psi}^{k-1}}$.
		Then~$\Rk{(\Image{\bar{\psi}^{k-1}})} \leq \frac{1}{2}\Rk{F}$ and, therefore, we can take $\psi = \bar{\psi}^{k-1}$.
		
		Now let~$\psi$ be as above.
		If $F' \coloneqq \Image{\psi}$ is simple, we set $\phi = \psi$.
		Otherwise, we note that~$F'$ is torsion free (because~$F' \subset F$) and has a nonscalar endomorphism.
		Hence, the induction hypothesis applies to~$F'$ and yields an endomorphism $\psi' \colon F' \longrightarrow F'$
		such that $	\Image{\psi'}$ is simple. Then we take $\phi = \psi' \circ \psi$.
	\end{proof}

	Now we are ready to construct a harmonic filtration.
	
	\begin{proposition}\label{good filtration}
		If~$F$ is a balanced vector bundle, it has a harmonic filtration.
		If~$F$ is not simple, the length of the filtration is at least~$2$.
	\end{proposition}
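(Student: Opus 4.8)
The plan is to argue by induction on the rank of $F$, constructing the filtration from the top by repeatedly peeling off a simple quotient produced by an endomorphism. The base of the induction is the case where $F$ is simple; this in particular covers every rank-one balanced bundle, since a rank-one locally free sheaf is a line bundle and hence simple, so the induction is well-founded. When $F$ is simple I would take the length-one filtration $F = F_1 \supsetneq F_2 = 0$ and claim it is harmonic: its unique factor $F_1/F_2 = F$ is simple and torsion free (being a vector bundle), and because $i = n = 1$ there is no rank constraint to verify in~\ref{filt:gi}; condition~\ref{filt:cgi} then follows by applying Lemma~\ref{l0}\ref{it:h0} to the subsheaf $G = F$ of the balanced bundle $F$, which yields $D_1 = -c_1(F) > 0$ and $h^0(D_1) \ge 2$.

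For the inductive step I would assume $F$ is not simple, so $\Rk F \ge 2$ and $F$ carries a nonscalar endomorphism. By Lemma~\ref{simple image rk leq 1/2} there is an endomorphism $\phi \colon F \to F$ with $\Image\phi$ simple and $\Rk(\Image\phi) \le \frac{1}{2}\Rk F$. I set $F' \coloneqq \Ker\phi$; since $F/F' \cong \Image\phi$ is torsion free, $F'$ is saturated, hence locally free, and since $\Image\phi \subset F$ and $h^0(F) = 0$ give $h^0(F/F') = 0$, the sheaf $F'$ is again balanced by Lemma~\ref{Mukai vector of a kernel}. Moreover $\Rk F' = \Rk F - \Rk(\Image\phi) \ge \frac{1}{2}\Rk F > 0$, so $0 < \Rk F' < \Rk F$ and the induction hypothesis applies to $F'$.

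By induction $F'$ has a harmonic filtration $F' = F'_1 \supsetneq \dots \supsetneq F'_{m+1} = 0$ of length $m \ge 1$, and I would prepend $F$ to obtain $F = F_1 \supsetneq F' = F'_1 \supsetneq \dots \supsetneq F'_{m+1} = 0$, a filtration of length $n = m+1 \ge 2$. Its new top factor $F_1/F_2 = F/F' \cong \Image\phi$ is simple, torsion free, and of rank $\le \frac{1}{2}\Rk F_1$, verifying~\ref{filt:gi} for $i = 1$, while~\ref{filt:cgi} for $i = 1$ follows from Lemma~\ref{l0}\ref{it:h0} applied to $\Image\phi \subset F$. All remaining factors coincide with those of the harmonic filtration of $F'$, so their properties transfer verbatim from the induction hypothesis under the index shift; the only thing to record is that the rank inequalities read $\Rk(F_{j+1}/F_{j+2}) \le \frac{1}{2}\Rk F_{j+1} = \frac{1}{2}\Rk F'_j$, which is exactly the inequality for $F'$ at index $j \ne m$. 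I expect the only points genuinely requiring care to be the two facts that make the induction close up: that $\Ker\phi$ is balanced, so the hypothesis applies, and that its rank drops strictly while remaining positive, so the process terminates (at worst at the rank-one simple case); both rest essentially on the preparatory Lemmas~\ref{l0}, \ref{Mukai vector of a kernel}, and~\ref{simple image rk leq 1/2}. Finally, since a non-simple $F$ yields $n = m + 1 \ge 2$, the filtration has length at least $2$ in that case, as claimed.
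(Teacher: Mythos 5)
Your proof is correct and follows essentially the same route as the paper: induction on $\Rk F$, peeling off a simple quotient via Lemma~\ref{simple image rk leq 1/2}, verifying condition~\ref{filt:cgi} for the top factor with Lemma~\ref{l0}\ref{it:h0}, and applying Lemma~\ref{Mukai vector of a kernel} to see that $\Ker\phi$ is balanced so the induction continues. If anything, you are slightly more careful than the paper at the base case, where the paper writes ``there is nothing to prove'' while condition~\ref{filt:cgi} for a simple $F$ does still require Lemma~\ref{l0}\ref{it:h0} applied to $G = F$, exactly as you note.
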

	
	\begin{proof}
		We argue by induction on $\Rk{F}$. If $F$ is simple there is nothing to prove.

		Suppose $F$ is not simple.
		Lemma~\ref{simple image rk leq 1/2} proves the existence of an endomorphism $\phi \colon F \longrightarrow F$
		such that $\Rk{\Image{\phi}} \leq \frac{1}{2}\Rk{F}$ and $\Image{\phi}$ is a simple torsion free sheaf.
		We set
		\begin{equation*}
		F_1 \coloneqq F,
		\qquad
		F_2 \coloneqq \Ker{\phi}
		\end{equation*}
		and check that the properties~\ref{filt:gi} and~\ref{filt:cgi} for~$i = 1$ are satisfied.
		Indeed, \ref{filt:gi} holds by construction of~$\phi$.
		Moreover, if~$D_1 \coloneqq -c_1(F_1/F_2) = c_1(\Image\phi)$ then Lemma~\ref{l0}\ref{it:h0}
		(whose assumptions are satified because~$F_1/F_2 \cong \Image{\phi} \subset F$
		and~$F^\vee$ is globally generated away from a finite set)
		implies that~$D_1 > 0$ and~$h^0(D_1) \ge 2$; this proves~\ref{filt:cgi}.

		To conclude, we note that by Lemma~\ref{Mukai vector of a kernel}
		(whose assumptions are satisfied because~$F_1/F_2 = \Image{\phi} \subset F$ and~$h^0(F) = 0$)
		the bundle~$F_2$ is balanced, so induction applies.
	\end{proof} 

	Now we apply the results of this section to Lazarsfeld bundles (see Definition~\ref{definition of L bundle}).
	Recall from the Introduction that for a line bundle $A$ on a smooth curve $C$ of genus $g$
	the Brill--Noether number is defined as $\rho(A) = g - h^0(A)h^1(A)$.
	Some of the basic properties of Lazarsfeld bundles are summarized below.

	\begin{proposition}[Section 1, \cite{lazarsfeld1986brill}]
	\label{plazarsfeld}
		Let $A$ be a globally generated line bundle on a smooth curve $C \in |H|$
		and let $F = F_{A,C}$ be the corresponding Lazarsfeld bundle. Then
		\begin{enumerate}[label={\textup{(\arabic*)}}]
			\item
			\label{it:f-mukai}
			the Mukai vector of $F$ has the form $v(F) = \langle   h^0(A), -H, h^1(A) \rangle$;
			\item
			\label{it:f-h}
			$h^0(F) = h^1(F) = 0$ and~$h^2(F) = \chi(F) = h^0(A) + h^1(A)$;
			\item
			\label{it:f-chi}
			$\chi( F \otimes F^{\vee}) = 2 - 2\rho(A)$; in particular, if $\rho(A) < 0$ then $F$ is not simple;
			\item
			\label{it:f-gg}
			if $A^{\vee} \otimes \omega_C$ is globally generated then $F^{\vee}$ is globally generated.
		\end{enumerate}
		In particular, if both~$A$ and~$A^\vee \otimes \omega_C$ are globally generated, $F = F_{A,C}$ is a balanced vector bundle.
	\end{proposition}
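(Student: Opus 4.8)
The plan is to read off everything from the defining sequence
\[
0 \longrightarrow F \longrightarrow H^0(A) \otimes \mathcal{O}_X \xrightarrow{\Ev_{A,C}} i_*A \longrightarrow 0
\]
and its dual. I would first note that $F$ is locally free: the torsion sheaf $i_*A$ is supported on the smooth divisor $C$, hence has homological dimension $1$, so its kernel inside the locally free sheaf $H^0(A) \otimes \mathcal{O}_X$ is a vector bundle. For parts~\ref{it:f-mukai} and~\ref{it:f-h} I would take the long exact cohomology sequence, using $h^0(\mathcal{O}_X)=1$, $h^1(\mathcal{O}_X)=0$, $h^2(\mathcal{O}_X)=1$ and $h^2(C,A)=0$. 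The induced map $H^0(A)\otimes H^0(\mathcal{O}_X) \to H^0(C,A)$ is the identity, which forces $h^0(F)=h^1(F)=0$ and produces a short exact sequence $0 \to H^1(C,A) \to H^2(F) \to H^0(A) \to 0$; this gives $h^2(F)=\chi(F)=h^0(A)+h^1(A)$, proving~\ref{it:f-h}. Reading ranks and first Chern classes off the defining sequence (with $c_1(i_*A)=[C]=H$) yields $r(F)=h^0(A)$ and $c_1(F)=-H$, and then $s(F)=\chi(F)-r(F)=h^1(A)$, which is~\ref{it:f-mukai}.

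Part~\ref{it:f-chi} is a Mukai-pairing computation. Since $F$ is locally free, $\chi(F\otimes F^\vee)=\chi(F,F)=-(v(F),v(F))=2r(F)s(F)-c_1(F)^2=2h^0(A)h^1(A)-H^2$, and substituting $H^2=2g-2$ turns this into $2-2\rho(A)$. For the simplicity statement I would use $\chi(F,F)=2\dim\Hom(F,F)-\dim\Ext^1(F,F)$ (via Serre duality $\Ext^2(F,F)\cong\Hom(F,F)^\vee$): if $\rho(A)<0$ then $\chi(F,F)>2$, which forces $\dim\Hom(F,F)>1$, so $F$ is not simple.

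The main obstacle is part~\ref{it:f-gg}. I would dualize the defining sequence by applying $\mathcal{H}om(-,\mathcal{O}_X)$. Since $\mathcal{H}om(i_*A,\mathcal{O}_X)=0$ (as $i_*A$ is torsion) and $\mathcal{E}xt^1(\mathcal{O}_X^{\oplus h^0(A)},\mathcal{O}_X)=0$, the long exact sequence collapses to
\[
0 \longrightarrow H^0(A)^\vee\otimes\mathcal{O}_X \longrightarrow F^\vee \longrightarrow \mathcal{E}xt^1(i_*A,\mathcal{O}_X) \longrightarrow 0.
\]
The key local computation is $\mathcal{E}xt^1(i_*A,\mathcal{O}_X)\cong i_*(A^\vee\otimes\mathcal{O}_C(C))$, which I would obtain by dualizing the (locally twisted) Koszul resolution $0\to\mathcal{O}_X(-C)\to\mathcal{O}_X\to i_*\mathcal{O}_C\to 0$; by adjunction $\mathcal{O}_C(C)=\omega_C$ on the K3 surface, so this Ext sheaf is $i_*(A^\vee\otimes\omega_C)$.

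It remains to deduce global generation of $F^\vee$ when $A^\vee\otimes\omega_C$ is globally generated. I would map the evaluation maps of the displayed dual sequence into the sequence itself and apply the four lemma. The subbundle $H^0(A)^\vee\otimes\mathcal{O}_X$ is globally generated; the hypothesis makes $i_*(A^\vee\otimes\omega_C)$ globally generated on $X$; and $H^1(\mathcal{O}_X)=0$ makes $H^0(F^\vee)\to H^0(C,A^\vee\otimes\omega_C)$ surjective, so the top row of sections stays exact after tensoring with $\mathcal{O}_X$. The four lemma then gives surjectivity of $H^0(F^\vee)\otimes\mathcal{O}_X\to F^\vee$, i.e. global generation. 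Finally, combining~\ref{it:f-h}, \ref{it:f-gg} and local freeness of $F$ shows that $F$ is balanced whenever both $A$ and $A^\vee\otimes\omega_C$ are globally generated.
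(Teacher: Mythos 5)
Your proposal is correct. Note that the paper does not prove this proposition at all --- it is imported verbatim from Section~1 of \cite{lazarsfeld1986brill} --- and your reconstruction (local freeness from the homological dimension of $i_*A$, the long exact cohomology sequence for parts (1)--(2), the Mukai-pairing computation with Serre duality for part (3), and the dualized sequence
\begin{equation*}
0 \longrightarrow H^0(A)^\vee\otimes\mathcal{O}_X \longrightarrow F^\vee \longrightarrow i_*(A^\vee\otimes\omega_C) \longrightarrow 0
\end{equation*}
combined with $H^1(\mathcal{O}_X)=0$ and the four lemma for part (4)) is essentially the standard argument given in that reference, so there is nothing to flag beyond confirming that your version is sound.
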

	
	\begin{corollary}\label{good filtration for L bundles exists}
		Let $F = F_{A,C}$ be a Lazarsfeld bundle, where $A$ and $A^{\vee} \otimes \omega_C$ are globally generated.
		Then~$F$ has a harmonic filtration.
		If~$F$ is not simple, the length of the filtration is at least~$2$.
	\end{corollary}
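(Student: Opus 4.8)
The plan is to combine the two results just established, so that essentially no new work is required. The crucial input is the final sentence of Proposition~\ref{plazarsfeld}: it asserts that whenever both $A$ and $A^\vee \otimes \omega_C$ are globally generated, the associated Lazarsfeld bundle $F = F_{A,C}$ is a balanced vector bundle in the sense of Definition~\ref{def:good-sheaf}. Thus, under the hypotheses of the corollary, I would first record that $F$ is balanced; this uses only properties~\ref{it:f-h} and~\ref{it:f-gg} of Proposition~\ref{plazarsfeld}, namely the vanishing $h^0(F) = h^1(F) = 0$ and the global generation of $F^\vee$.

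Once $F$ is known to be balanced, I would invoke Proposition~\ref{good filtration} directly: it guarantees that every balanced vector bundle admits a harmonic filtration in the sense of Definition~\ref{def:good-filtration}, and that this filtration has length at least $2$ whenever the bundle fails to be simple. Applying this to $F = F_{A,C}$ yields both assertions of the corollary simultaneously.

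The only point worth making explicit is the matching of the two non-simplicity conditions: Proposition~\ref{good filtration} produces a filtration of length $\ge 2$ precisely when $F$ is not simple, which is exactly the hypothesis appearing in the corollary (and, by Proposition~\ref{plazarsfeld}\ref{it:f-chi}, this is in turn guaranteed when $\rho(A) < 0$). Consequently there is no genuine obstacle here; the corollary is a formal consequence of Propositions~\ref{plazarsfeld} and~\ref{good filtration}, and the proof reduces to verifying the balancedness hypothesis—already isolated in Proposition~\ref{plazarsfeld}—and then quoting Proposition~\ref{good filtration}.
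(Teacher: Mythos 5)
Your proof is correct and follows exactly the paper's own argument: use Proposition~\ref{plazarsfeld} to see that $F_{A,C}$ is balanced, then apply Proposition~\ref{good filtration}. No issues to report.
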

	
	\begin{proof}
		Since $A$ and $A^{\vee} \otimes \omega_C$ are globally generated,
		it follows from Proposition~\ref{plazarsfeld} that $F$ is a balanced vector bundle,
		hence the existence of a harmonic filtration follows from Proposition~\ref{good filtration}.
	\end{proof}
	
	We summarize properties of harmonic filtrations of Lazarsfeld bundles in the following lemma.
	
	\begin{lemma}\label{properties of good filtration of L bundles}
		Let $F = F_{A,C}$ be a Lazarsfeld bundle, where $A$ and $A^{\vee} \otimes \omega_C$ are globally generated.
		Let
		\begin{equation*}
		F = F_1 \supsetneq F_2 \supsetneq \dots \supsetneq F_{n+1} = 0
		\end{equation*}
		be a harmonic filtration.
		Denote $v(F_i/F_{i+1}) = \langle r_i, -D_i, s_i \rangle$ for $1 \leq i \leq n$. Then
		\begin{enumerate}[label={\textup{(\arabic*)}}]
			\item
			\label{it:h-decomposition}
			$r_1 + \dots + r_n = h^0(A)$, $D_1 + \dots + D_n = H$, and $s_1 + \dots + s_n = h^1(A)$;
			\item
			\label{it:di}
			$D_i > 0$ and $h^0(D_i) \geq 2$;
			\item
			\label{it:rs}
			$r_is_i \leq \frac{1}{2}D_i^2 + 1$;
			\item
			\label{it:r}
			$r_i \leq r_{i+1} + \dots + r_n$;
			\item
			\label{it:s}
			$s_n \geq 1$.
		\end{enumerate}
	\end{lemma}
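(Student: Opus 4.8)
The plan is to read off properties~\ref{it:h-decomposition}--\ref{it:s} from the Mukai vectors of the graded factors, treating the first four as formal consequences and concentrating the real work on the last one. First I would establish property~\ref{it:h-decomposition}: since the Mukai vector is additive on short exact sequences, the filtration gives $v(F) = \sum_{i=1}^n v(F_i/F_{i+1}) = \langle \sum r_i, -\sum D_i, \sum s_i\rangle$, and comparing with $v(F) = \langle h^0(A), -H, h^1(A)\rangle$ from Proposition~\ref{plazarsfeld}\ref{it:f-mukai} yields the three equalities. Property~\ref{it:di} is immediate from Definition~\ref{def:good-filtration}\ref{filt:cgi}. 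Property~\ref{it:rs} follows because each $F_i/F_{i+1}$ is simple by Definition~\ref{def:good-filtration}\ref{filt:gi}, so Lemma~\ref{formula simple sheaf Mukai vector} applies with $c_1(F_i/F_{i+1})^2 = D_i^2$. For property~\ref{it:r} (with $i<n$), additivity of rank along the filtration gives $\Rk F_i = r_i + r_{i+1} + \dots + r_n$, and combined with the bound $r_i = \Rk(F_i/F_{i+1}) \le \tfrac12\Rk F_i$ from Definition~\ref{def:good-filtration}\ref{filt:gi} this rearranges to $r_i \le r_{i+1}+\dots+r_n$.

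The substance is property~\ref{it:s}. The plan is to show that the last factor $F_n = F_n/F_{n+1}$ is itself a balanced vector bundle and then invoke Lemma~\ref{Mukai vector lemma}, which gives exactly $s(F_n) = s_n \ge 1$. To set this up I would first observe that every quotient $F/F_{i+1}$ is torsion free: it is built from the simple torsion-free factors $F_j/F_{j+1}$ by successive extensions, and an extension of torsion-free sheaves is torsion free. In particular $F/F_n$ is torsion free, so $F_n$ is a \emph{saturated} subsheaf of $F$, i.e.\ $\widehat{F_n}=F_n$; by Lemma~\ref{l0}\ref{it:case-zero-h0} and~\ref{it:case-global-generation} this already yields $h^0(F_n) = 0$ and that $F_n^\vee$ is globally generated away from a finite set. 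Thus $F_n$ is balanced as soon as $h^1(F_n) = 0$, and the long exact sequence of $0\to F_n\to F\to F/F_n\to 0$ together with $h^0(F)=h^1(F)=0$ (Proposition~\ref{plazarsfeld}\ref{it:f-h}) identifies $h^1(F_n) \cong h^0(F/F_n)$. Hence it suffices to prove $h^0(F/F_n)=0$, which by left-exactness of $H^0$ along the filtration reduces to the single claim $h^0(F_i/F_{i+1}) = 0$ for every $i$. Granting this, Lemma~\ref{Mukai vector of a kernel} applied to $F_n\subset F$ makes $F_n$ balanced, and property~\ref{it:s} follows.

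The hard part will be the vanishing $h^0(F_i/F_{i+1}) = 0$. For factors of rank $1$ this is easy: a rank-one torsion-free sheaf $G$ with $c_1(G) = -D_i$ embeds into its reflexive hull $G^{\vee\vee} \cong \mathcal{O}_X(-D_i)$, and $h^0(\mathcal{O}_X(-D_i)) = 0$ because $D_i>0$. For higher-rank factors simplicity and negativity of $c_1$ are not by themselves enough, and the clean way I would proceed is to exploit that in the filtration produced by Proposition~\ref{good filtration} each factor is realized as a subsheaf of $F$: there $F_{i+1} = \Ker(\phi_i)$ for an endomorphism $\phi_i$ of $F_i$, so $F_i/F_{i+1} \cong \Image(\phi_i) \subset F_i \subset F$, whence $h^0(F_i/F_{i+1}) \le h^0(F) = 0$. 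Equivalently, I would run the induction on $i$ proving that every $F_i$ is balanced: the base case $F_1 = F$ holds by Proposition~\ref{plazarsfeld}, and the inductive step applies Lemma~\ref{Mukai vector of a kernel} to $F_{i+1}\subset F_i$, whose only nontrivial hypothesis is precisely $h^0(F_i/F_{i+1})=0$. I expect this identification of each graded factor with a subsheaf of the balanced bundle to be the crux: it is what converts the a priori uncontrolled $H^0$ of a quotient into the controlled $H^0$ of a subsheaf, and all of property~\ref{it:s} rests on it.
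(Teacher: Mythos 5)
Your proposal is correct, and for items \ref{it:h-decomposition}--\ref{it:r} it coincides with the paper's proof: additivity of Mukai vectors plus Proposition~\ref{plazarsfeld}\ref{it:f-mukai}, property~\ref{filt:cgi} of Definition~\ref{def:good-filtration}, Lemma~\ref{formula simple sheaf Mukai vector}, and additivity of rank combined with property~\ref{filt:gi}. The divergence is in item~\ref{it:s}, and it is in your favor. The paper's entire argument there is the parenthetical assertion that $F_n$ is balanced, after which Lemma~\ref{Mukai vector lemma} applies; no justification of balancedness is given. You correctly identify that this assertion is the real content: saturatedness of $F_n$ in $F$ (valid since $F/F_n$ is an iterated extension of torsion-free sheaves) gives $h^0(F_n)=0$ and codimension-one global generation of $F_n^\vee$ via Lemma~\ref{l0}, but the remaining condition $h^1(F_n)=0$ is equivalent, by the long exact sequence, to $h^0(F/F_n)=0$, hence reduces to $h^0(F_i/F_{i+1})=0$ for $i<n$ --- and this vanishing does \emph{not} follow from the axioms of Definition~\ref{def:good-filtration} once a factor has rank at least two. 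Your fix --- exploiting that in the filtration produced by Proposition~\ref{good filtration} each factor is the image of an endomorphism of $F_i$, hence a subsheaf of $F$, so that $h^0(F_i/F_{i+1})\le h^0(F)=0$ --- is exactly what makes the argument close, and your inductive reformulation (every $F_i$ is balanced) is precisely what the proof of Proposition~\ref{good filtration} establishes along the way via Lemma~\ref{Mukai vector of a kernel}.

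One caveat you should make explicit: the lemma as stated quantifies over \emph{arbitrary} harmonic filtrations, whereas your proof of item~\ref{it:s} (and, once the missing justification is supplied, the paper's as well) covers only filtrations arising from the construction of Proposition~\ref{good filtration}. This is harmless for the application --- Setup~\ref{setup} takes its filtration from Corollary~\ref{good filtration for L bundles exists}, so one may simply use the constructed one --- but to get the lemma literally as written, Definition~\ref{def:good-filtration} would need to be strengthened to require $h^0(F_i/F_{i+1})=0$ (equivalently, that each $F_i$ be balanced), a condition the construction provides for free. You flagged this dependence on the provenance of the filtration; the paper does not, so on this point your write-up is the more careful of the two.
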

	
	\begin{proof}
		Indeed, \ref{it:h-decomposition} follows from additivity of Mukai vectors and Proposition~\ref{plazarsfeld}\ref{it:f-mukai},
		while~\ref{it:di} follows from property~\ref{filt:cgi} of a harmonic filtration.
		Since all sheaves $F_i/F_{i+1}$ are simple, \ref{it:rs} follows from Lemma~\ref{formula simple sheaf Mukai vector}.
		Moreover, \ref{it:r} follows from additivity of rank and property~\ref{filt:gi} of a harmonic filtration.
		Finally, \ref{it:s} follows from Lemma~\ref{Mukai vector lemma} (because the sheaf~$F_n/F_{n+1} = F_n$ is balanced).
	\end{proof}

	\section{Proof of Theorem~\ref{main theorem}}
	\label{sec proof}

	In this section we combine the results from Sections~\ref{sec Divisors on Brill--Noether General K3 Surface} and~\ref{sec Lazarsfeld Bundles}
	to prove Theorem~\ref{main theorem}.

	Throughout this section, we work under the following assumptions.

	\begin{setup}
	\label{setup}
	Let~$(X,H)$ be a quasi-polarized K3 surface of genus~$g$.
	Fix a smooth curve~$C \in |H|$ and a line bundle~$A$ on~$C$
	such that both~$A$ and~$A^\vee \otimes \omega_C$ are globally generated and
	\begin{equation}
	\label{eq:rho-g}
	g < h^0(A)h^1(A).
	\end{equation}
	Let~$F:= F_{A,C}$ be the corresponding Lazarsfeld bundle and
	let~$F = F_1 \supsetneq F_2 \supsetneq \dots \supsetneq F_n \supsetneq F_{n+1} = 0$
	be a harmonic filtration on~$F$ with~$n \ge 2$ (it exists by Corollary~\ref{good filtration for L bundles exists}).
	We consider the Mukai vectors
	\begin{equation}
	\label{eq:mv-fi}
	v(F_i/F_{i+1}) = \langle r_i, -D_i, s_i \rangle,
	\qquad
	1 \le i \le n,
	\end{equation}
	of the quotients of the harmonic filtration.
	By Lemma~\ref{properties of good filtration of L bundles} we have a decomposition $H = D_1 + \dots + D_n$,
	where for all~$i$ we have $D_i > 0$ and $h^0(D_i) \geq 2$.
	Applying Lemma~\ref{sorting fixed components} we obtain a decomposition
	\begin{equation*}
	H = \bar{D}_1 + \dots + \bar{D}_n,
	\qquad
	\text{where $\bar{D}_i > 0$ and $\bar{D}_i^2 \geq 0$}.
	\end{equation*}
	We also note that~$\bar{D}_i^2 \ge D_i^2$, hence Lemma~\ref{properties of good filtration of L bundles}\ref{it:rs} implies
	\begin{equation}
	\label{eq:ri-si-general}
			r_{i}s_{i} \leq \frac{1}{2} \bar{D}^2_{i}+ 1.
	\end{equation}
	\end{setup}
	
	To prove Theorem~\ref{main theorem} we consider separately the cases where the length of the harmonic filtration of~$F$
	is~$n = 2$, $n = 3$, $n = 4$ and $n \geq 5$;
	in each of these cases we will arrive at a contradiction, thus proving that the curve $C$ must be Brill--Noether general.

	To begin with, let us consider the case where~$n = 2$.
	
	\begin{proposition}\label{simple kernel and image}
		In Setup~\ref{setup} assume~$n = 2$.
		Then the K3 surface $(X,H)$ is not Brill--Noether general.
	\end{proposition}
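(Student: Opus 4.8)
The plan is to derive a contradiction with the Brill--Noether generality of $(X,H)$ by exhibiting the decomposition $H = \bar D_1 + \bar D_2$ from Setup~\ref{setup} as a violating one; that is, I will show $h^0(\bar D_1) h^0(\bar D_2) > g$. Since $\bar D_i > 0$, Lemma~\ref{lem:h0-chi} gives $h^0(\bar D_i) \geq \chi(\bar D_i) = \tfrac12 \bar D_i^2 + 2$, so it suffices to prove $\chi(\bar D_1)\chi(\bar D_2) > g$. On the other hand, the hypothesis~\eqref{eq:rho-g} together with Lemma~\ref{properties of good filtration of L bundles}\ref{it:h-decomposition} gives $g < h^0(A)h^1(A) = (r_1+r_2)(s_1+s_2)$. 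Hence the whole proposition reduces to the single numerical inequality
\[
(r_1+r_2)(s_1+s_2) \;\leq\; \chi(\bar D_1)\chi(\bar D_2).
\]

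Setting $a_i := \tfrac12 \bar D_i^2 + 1$, so that $\chi(\bar D_i) = a_i + 1$, the inequality I need becomes $(r_1+r_2)(s_1+s_2) \leq (a_1+1)(a_2+1)$. The inputs available are: $a_i \geq 1$ (because $\bar D_i^2 \geq 0$), $r_i s_i \leq a_i$ (this is~\eqref{eq:ri-si-general}), $r_i \geq 1$ (as $F_i/F_{i+1}$ is nonzero and torsion free by Definition~\ref{def:good-filtration}\ref{filt:gi}), $r_1 \leq r_2$ (Lemma~\ref{properties of good filtration of L bundles}\ref{it:r}), and $s_2 \geq 1$ (Lemma~\ref{properties of good filtration of L bundles}\ref{it:s}). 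I expect to split into two cases according to the sign of $s_1$, which is the one quantity above not controlled by a definite sign.

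If $s_1 \geq 1$, then $s_1,s_2,r_1,r_2 \geq 1$ force $r_is_i \geq 1$, so $a_i \geq r_i s_i \geq 1$ and
\[
(a_1+1)(a_2+1) \geq (r_1 s_1 + 1)(r_2 s_2 + 1) = (r_1+r_2)(s_1+s_2) + (r_1 s_2 - 1)(r_2 s_1 - 1),
\]
where the last product is nonnegative since $r_1 s_2, r_2 s_1 \geq 1$. If instead $s_1 \leq 0$, then $s_1 + s_2 \leq s_2$ and, using $r_1 \leq r_2$ and $s_2 \geq 1$,
\[
(r_1+r_2)(s_1+s_2) \leq (r_1+r_2)s_2 \leq 2 r_2 s_2 \leq 2 a_2 < (a_1+1)(a_2+1),
\]
the last step using $a_1 + 1 \geq 2$. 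In either case the required inequality holds, and combining it with the bounds above yields $h^0(\bar D_1)h^0(\bar D_2) \geq \chi(\bar D_1)\chi(\bar D_2) \geq h^0(A)h^1(A) > g$, so $(X,H)$ is not Brill--Noether general.

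The routine verifications I am deferring are the identity expanding $(r_1 s_1+1)(r_2 s_2+1)$ and the elementary sign bookkeeping; the genuinely load-bearing points are the factorization $(r_1 s_2 - 1)(r_2 s_1 - 1) \geq 0$ in the case $s_1 \geq 1$ and the use of $r_1 \leq r_2$ together with $s_2 \geq 1$ in the case $s_1 \leq 0$. I anticipate the main subtlety — the only place the argument could go wrong — is precisely the possibility $s_1 \leq 0$: there the bound $a_i \geq r_i s_i$ is useless for $i = 1$, and one must instead exploit the rank inequality $r_1 \leq r_2$ coming from the harmonic filtration to absorb the cross terms.
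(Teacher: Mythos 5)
Your proof is correct and follows essentially the same route as the paper's: the same case split on the sign of $s_1$, the same factorization $(r_1s_2-1)(r_2s_1-1)\geq 0$ when $s_1\geq 1$, and the same use of $r_1\leq r_2$, $s_2\geq 1$ when $s_1\leq 0$ (the paper phrases this case via $r_2\geq \tfrac12 h^0(A)$, $s_2\geq h^1(A)$ and $h^0(D_1)\geq 2$, while you use $\chi(\bar D_1)\geq 2$ from $\bar D_1^2\geq 0$ — an immaterial difference).
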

	
	\begin{proof}
	Recall that the Mukai vectors of the factors of the filtration are denoted by
		\begin{equation*}
			v(F_1/F_2) = \langle r_1, -D_1, s_1 \rangle
			\qquad\text{and}\qquad
			v(F_2) = \langle r_2, -D_2, s_2 \rangle.
		\end{equation*}
		By Lemma~\ref{properties of good filtration of L bundles}
		we have
		\begin{itemize}
		\item
		$r_2 \geq r_1$ and $r_1 + r_2 = h^0(A)$; hence, $r_2 \geq \frac{1}{2}h^0(A) \geq r_1$;
		\item
		$s_1 + s_2 = h^1(A)$ and~$s_2 \ge 1$;
		\item
		$r_1s_1 \leq \frac{1}{2}D_1^2 + 1 = \chi(D_1) - 1$ and~$r_2s_2 \leq \frac{1}{2}D_2^2 + 1 = \chi(D_2) - 1$.
		\end{itemize}
		Consider two cases: $s_1 \geq 1$ and $s_1 \leq 0$.
		
		First, suppose that $s_1 \leq 0$. In this case $s_2 \geq h^1(A)$. Then, as $h^2(D_1) = 0$, we have
		\begin{equation*}
		\frac{1}{2}h^0(A)h^1(A) \leq r_2s_2 < \chi(D_2) \leq h^0(D_2).
		\end{equation*}
		On the other hand, $h^0(D_1) \ge 2$, again by Lemma~\ref{properties of good filtration of L bundles}.
		Taking~\eqref{eq:rho-g} into account, we obtain
		\begin{equation*}
		g < h^0(A)h^1(A) = 2 \cdot \frac{1}{2}h^0(A)h^1(A) < h^0(D_1)h^0(D_2).
		\end{equation*}
		Thus, the K3 surface $(X,H)$ is not Brill--Noether general.

		Now suppose that $s_1 \geq 1$. In this case one has the inequality
		\begin{equation*}
		(r_1 + r_2)(s_1 + s_2) \leq (r_1s_1+1)(r_2s_2+ 1).
		\end{equation*}
		Indeed, expanding the parentheses and taking all the summands to the right hand side one gets the inequality
		that holds under our assumptions: $0 \leq r_1r_2s_1s_2 -r_1s_2 -r_2s_1 + 1 = (r_1s_2 - 1)(r_2s_1 - 1)$.
		Finally, as $h^2(D_1) = h^2(D_2) = 0$, we have
		\begin{equation*}
		h^0(A)h^1(A) = (r_1 + r_2)(s_1 + s_2) \leq (r_1s_1+1)(r_2s_2+ 1) \leq \chi(D_1)\chi(D_2)\leq h^0(D_1)h^0(D_2),
		\end{equation*}
		and, therefore, the K3 surface $(X,H)$ is not Brill--Norther general, again by~\eqref{eq:rho-g}.
	\end{proof}
	
	In the following proposition we consider the most complicated case $n = 3$.
	In the proof, we will use the following elementary observation:

	\begin{lemma}
	\label{lem:am-gm}
	Let~$u,v,c \in \mathbb{Z}$.
	If~$u + v \le 2c$ then~$uv \le c^2$.
	If, moreover, $0 \le v < c$ then~$uv \le c^2 - 1$.
	\end{lemma}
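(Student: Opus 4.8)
The plan is to derive both inequalities from a single computation that rewrites the product $uv$ as a difference of two squares. Starting from the hypothesis $u + v \le 2c$, I would first isolate $u \le 2c - v$. In the sharper situation we are given $v \ge 0$, so multiplying this inequality by $v$ preserves its direction and yields
\begin{equation*}
uv \le (2c - v)v = 2cv - v^2 = c^2 - (c - v)^2.
\end{equation*}
This displayed bound is the heart of the argument: it controls $uv$ by $c^2$ minus a perfect square, from which both assertions fall out immediately.

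For the first assertion, the term $(c-v)^2$ is nonnegative, so the displayed inequality gives $uv \le c^2$ at once. The only point requiring care in the purely formal statement for $u,v,c \in \mathbb{Z}$ is the sign of $v$, since multiplying $u \le 2c - v$ by a negative $v$ would reverse the inequality; I would therefore either invoke the nonnegativity of the relevant integer in the intended application, or split into the cases $v \ge 0$ (handled by the display) and $v < 0$, where $uv \le 0 \le c^2$ as soon as $u \ge 0$. This sign bookkeeping is the one genuine subtlety, and I expect it to be the main obstacle to a fully general statement rather than the inequality itself.

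For the second assertion I would bring in the extra hypothesis $v < c$. Since $v$ and $c$ are integers with $v < c$, we have $c - v \ge 1$ and hence $(c-v)^2 \ge 1$. Substituting this into the displayed bound gives
\begin{equation*}
uv \le c^2 - (c-v)^2 \le c^2 - 1,
\end{equation*}
which is exactly the claimed strengthening. The entire proof is thus one short chain of inequalities, and the decisive step throughout is the multiplication by $v$, where the hypothesis $0 \le v$—explicit in the second part—is precisely what powers both the basic bound and its $-1$ refinement.
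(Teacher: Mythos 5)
Your proof is correct and takes essentially the same route as the paper: the paper's proof of the second part also replaces $u$ by $2c - v$ (legitimate precisely because $v \ge 0$) and then writes $uv = c^2 - (t+1)^2$ with $t + 1 = c - v \ge 1$, which is exactly your difference-of-squares display; the first part is there dispatched as the AM--GM inequality. Your sign caveat is a genuine catch rather than a flaw in your write-up: as literally stated for arbitrary $u,v,c \in \mathbb{Z}$ the first claim is false (take $u = v = -1$, $c = 0$: then $u + v = -2 \le 0$ but $uv = 1 > 0 = c^2$), and the paper's appeal to AM--GM tacitly assumes the nonnegativity you flag. This is harmless in context, since the lemma is only ever applied with $u = h^0(A)$ and $v = h^1(A)$, which are nonnegative.
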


	\begin{proof}
	The first is the AM--GM inequality.
	To prove the second we first note that increasing~$u$ to~$2c - v$ we only increase~$uv$, so we may assume that~$u + v = 2c$.
	Then we can write~$u = c + 1 + t$, $v = c - 1 - t$ for~$t \ge 0$, and then
	\begin{equation*}
	uv = (c + 1 + t)(c - 1 - t) = c^2 - (t + 1)^2.
	\end{equation*}
	Since~$t + 1 \ge 1$, we conclude that~$uv \le c^2 - 1$, as required.
	\end{proof}
	
	\begin{proposition}\label{theorem case n=3}
		In Setup~\ref{setup} assume~$n = 3$.
		Then the K3 surface $(X,H)$ is not Brill--Noether general.
	\end{proposition}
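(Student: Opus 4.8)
The plan is to combine Proposition~\ref{conditions on a polarizing divisor}, which constrains the self-intersections $\bar D_i^2$ alone, with the finer data $(r_i,s_i)$ of the harmonic filtration recorded in Lemma~\ref{properties of good filtration of L bundles}. I argue by contradiction, assuming $(X,H)$ is Brill--Noether general. Sorting so that $\bar D_1^2\ge\bar D_2^2\ge\bar D_3^2\ge0$ and writing $a_i:=\tfrac12\bar D_i^2+1$, Proposition~\ref{conditions on a polarizing divisor}\ref{case3} excludes every profile except the two families of Remark~\ref{exceptional cases}: profile~(I), where $\bar D_2^2=\bar D_3^2=0$, and profile~(II), where $(\bar D_1^2,\bar D_2^2,\bar D_3^2)\in\{(2,2,0),(4,2,0),(6,2,0)\}$, i.e.\ $(a_1,a_2,a_3)=(k,2,1)$ with $k\in\{2,3,4\}$. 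These are precisely the profiles not settled by self-intersections, so the ranks $r_i$ and the integers $s_i$ must now enter. Recall also that each factor $F_i/F_{i+1}$ is torsion free with $D_i>0$, which forces $r_i\ge1$.

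I would first dispose of profile~(II). Brill--Noether generality forces $h^0(\bar D_i+\bar D_j)h^0(\bar D_k)\le g$ for each of the three regroupings of $H=\bar D_1+\bar D_2+\bar D_3$, so by $\chi\le h^0$ (Lemma~\ref{lem:h0-chi}) the three excesses $\chi(\bar D_i+\bar D_j)\chi(\bar D_k)-g$ evaluated in Lemma~\ref{no negative intersections} are all $\le0$. Adding the three inequalities, the intersection numbers $\bar D_i\cdot\bar D_j$ combine to give a lower bound of the shape $g\ge 6k+8$. On the other hand $\rho(A)<0$ gives $g<h^0(A)h^1(A)=(r_1+r_2+r_3)(s_1+s_2+s_3)$, and I would bound this product from above. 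From $r_i\ge1$ and \eqref{eq:ri-si-general} one gets $r_i+s_i\le a_i+1$ whenever $s_i\ge1$, while the rank inequalities $r_1\le r_2+r_3\le 2r_3$ together with $s_3\ge1$ control the remaining ranks; feeding these into Lemma~\ref{lem:am-gm} yields $(r_1+r_2+r_3)(s_1+s_2+s_3)\le 6k+8$. Hence $g\le 6k+7$, contradicting $g\ge 6k+8$.

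Profile~(I) is handled analogously once the elliptic degeneration is stripped off. Passing to moving parts, if $\bar D_2\cdot\bar D_3=0$ then Lemma~\ref{intersection of two divisors without fixed parts}\ref{case 1 lemma intersection of two divisors without fixed parts} shows that $\bar D_2,\bar D_3$ are multiples of a single elliptic pencil, so $H=nE+\bar D_1$ with $n\ge2$ and $\bar D_1^2\ge0$, and Lemma~\ref{multiple elliptic curves}\ref{multiple elliptic curves condition 1} already yields non-generality, contrary to our assumption. Otherwise $\bar D_2\cdot\bar D_3\ge2$ by Lemma~\ref{intersection of two divisors without fixed parts}\ref{case 2 lemma intersection of two divisors without fixed parts}, and the split inequality for the regrouping that joins the two $0$-square divisors, improved by $\bar D_2\cdot\bar D_3\ge2$, gives $g\ge 4a_1+4$; the matching bound $(r_1+r_2+r_3)(s_1+s_2+s_3)\le 4a_1+4$ again follows from \eqref{eq:ri-si-general}, the rank inequalities and $s_3\ge1$ (crucially uniform in the unbounded parameter $a_1$), producing the contradiction.

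The main obstacle is the second half of each case: turning $r_is_i\le a_i$ into a sharp upper bound on $(r_1+r_2+r_3)(s_1+s_2+s_3)$. The difficulty is that $s_3\ge1$ is the only sign constraint, so $s_1$ or $s_2$ may be $\le0$; then the corresponding rank is no longer bounded through $a_i$, and one must cap the total rank purely through $r_1\le r_2+r_3\le 2r_3$. I expect to treat these sign cases in the spirit of Proposition~\ref{simple kernel and image}: a non-positive $s_i$ forces the complementary factors to absorb all of $h^1(A)$, after which Lemma~\ref{lem:am-gm} closes the estimate. Verifying that the resulting finite list of $(r_i,s_i)$-profiles never pushes the product past the geometric lower bound on $g$ — uniformly in $k$ for~(II) and in $a_1$ for~(I) — is the computational heart of the argument, and the boundary cases (e.g.\ $k=4$, where both bounds meet at $32$) are where the estimates are tightest.
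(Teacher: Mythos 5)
Your overall strategy is the same as the paper's: reduce to the exceptional profiles of Remark~\ref{exceptional cases}\ref{case:bn-3}, then play a geometric lower bound on~$g$ (coming from two-part regroupings of~$H$, Brill--Noether generality and Lemma~\ref{lem:h0-chi}) against an upper bound on~$h^0(A)h^1(A)$ extracted from~\eqref{eq:ri-si-general}, the rank inequalities and~$s_3\ge 1$, with a case division on the signs of the~$s_i$. However, the quantitative claims that carry the whole argument are asserted rather than proved, and at least two of them fail as stated. For profile~(II), your bound $h^0(A)h^1(A)\le 6k+8$ does not follow from the route you cite: when exactly one $s_i\le 0$, the additive bounds give only $h^0(A)+h^1(A)\le 2k+6$, and Lemma~\ref{lem:am-gm} then yields $h^0(A)h^1(A)\le (k+2)(k+4)$, which is $48$ for $k=4$, well above your target $6k+8=32$. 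To get a bound of size $6k+O(1)$ one must instead use the multiplicative estimate of Proposition~\ref{simple kernel and image}, namely $(r_i+r_j)(s_i+s_j)\le (r_is_i+1)(r_js_j+1)$ for the two factors with positive~$s$, combined with $r_k\le r_i+r_j$; you gesture at this (``in the spirit of Proposition~\ref{simple kernel and image}'') but never carry it out, and this computation, together with the bookkeeping between filtration order (which governs $r_i\le r_{i+1}+\dots$ and $s_n\ge1$) and square-sorted order (which governs which $a_i$ bounds which $r_is_i$), is precisely the content of the proof.

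For profile~(I) the situation is worse: your two bounds meet with no slack and actually fail at the corner $a_1=1$ (all three squares zero). There, in the all-$s_i$-positive case, $r_is_i\le 1$ forces $r_i=s_i=1$, so $h^0(A)h^1(A)=9$, exceeding your claimed upper bound $4a_1+4=8$, while your geometric lower bound is only $g\ge 8$ --- no contradiction. The paper closes this by a sharper dichotomy than yours: either $\bar D_{i_2}\cdot\bar D_{i_3}\le 2+\varepsilon_1+\varepsilon_2+\varepsilon_3$, in which case Lemma~\ref{no negative intersections}\ref{it:condition-2} gives non-generality directly (no elliptic pencils needed), or \eqref{eq:di-dj} holds, which yields the stronger lower bound $(\varepsilon_1+2)(\varepsilon_1+2\varepsilon_2+5)$ --- equal to $10$ when $a_1=1$ --- leaving genuine room above the filtration-theoretic upper bounds. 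Finally, your elliptic sub-case of profile~(I) invokes Lemma~\ref{multiple elliptic curves}\ref{multiple elliptic curves condition 1}, whose hypothesis is a decomposition $H=nE+D$ with $D^2\ge 0$; after passing to the moving parts of $\bar D_2,\bar D_3$ the residual divisor is $\bar D_1+F_2+F_3$ with $F_i$ the fixed parts, and you have not shown its square is nonnegative, so the lemma does not apply as stated. In short: the skeleton matches the paper, but the estimates you name cannot be obtained by the tools you cite, and the corner cases you yourself flag as ``the computational heart'' are exactly where the stated bounds break.
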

	 
	\begin{proof}
		In this proposition we will have to take care of the two orders on the set of the divisor classes~$\bar{D}_i$.
		The first order comes from their appearance in the harmonic filtration of~$F$,
		and the other is the decreasing order of their self-intersections.
		Accordingly, we denote by~$(i_1,i_2,i_3)$ a permutation of~$(1,2,3)$ such that the integers
		\begin{equation*}
		\varepsilon_1 \coloneqq \frac12\bar{D}_{i_1}^2,
		\qquad
		\varepsilon_2 \coloneqq \frac12\bar{D}_{i_2}^2,
		\qquad
		\varepsilon_3 \coloneqq \frac12\bar{D}_{i_3}^2,
		\end{equation*}
		satisfy
		\begin{equation*}
		\varepsilon_1 \ge \varepsilon_2 \ge \varepsilon_3.
		\end{equation*}
		Clearly, if either of the conditions of Proposition~\ref{conditions on a polarizing divisor}\ref{case3} is satisfied,
		$X$ is not Brill--Noether general.
		Therefore, we can assume that one of the conditions of Remark~\ref{exceptional cases}\ref{case:bn-3} holds, i.e.,
		\begin{equation*}
		\varepsilon_3 = 0,
		\qquad
		\varepsilon_2 \in \{0,1\},
		\qquad\text{and}\qquad
		\text{if~$\varepsilon_2 = 1$ then~$\varepsilon_1 \in \{1,2,3\}$.}
		\end{equation*}
		Similarly, if the condition of Lemma~\ref{no negative intersections}\ref{it:condition-2} is satisfied, the K3 surface~$(X, H)$ is not Brill--Noether general; hence, we can assume that
		\begin{equation}
		\label{eq:di-dj}
		\bar{D}_{i_2} \cdot \bar{D}_{i_3} \geq
		3 + \frac{1}{2}\left(\bar{D}_{i_1}^2 + \bar{D}_{i_2}^2 + \bar{D}_{i_3}^2 \right) =
		3 + \varepsilon_1 + \varepsilon_2.
		\end{equation}

		To prove the proposition we will deduce a lower bound for the product of Euler characteristics
		\begin{align}
		\label{of the form 1}
			\chi(\bar{D}_{i_1})\chi(\bar{D}_{i_2} + \bar{D}_{i_3}) & \geq c_1(\varepsilon_1, \varepsilon_2),\\
		\intertext{and an upper bound for the product~$h^0(A)h^1(A)$}
		\label{of the form 2}
			c_2(\varepsilon_1, \varepsilon_2) &\geq h^0(A)h^1(A),
		\end{align}
		where $c_1(\varepsilon_1, \varepsilon_2)$ and $c_2(\varepsilon_1, \varepsilon_2)$
		are polynomials in $\varepsilon_1$ and $\varepsilon_2$ such that
		\begin{equation}
		\label{eq:c1-c2-inequality}
		c_1(\varepsilon_1, \varepsilon_2) \geq c_2(\varepsilon_1, \varepsilon_2)
		\qquad
		\text{for $\varepsilon_2 \in \{0,1\}$}.
		\end{equation}
		Since~$h^2(\bar{D}_{i_1}) = h^2(\bar{D}_{i_2} + \bar{D}_{i_3}) = 0$, we will obtain a chain of inequalities
		\begin{equation*}
		g < h^0(A)h^1(A) \leq
		c_2(\varepsilon_1, \varepsilon_2) \leq
		c_1(\varepsilon_1, \varepsilon_2) \leq
		\chi(\bar{D}_{i_1})\chi(\bar{D}_{i_2} + \bar{D}_{i_3}) \leq
		h^0(\bar{D}_{i_1})h^0(\bar{D}_{i_2} + \bar{D}_{i_3}),
		\end{equation*}
		where the first is~\eqref{eq:rho-g}, the second is~\eqref{of the form 2}, the third is~\eqref{eq:c1-c2-inequality},
		the fourth is~\eqref{of the form 1}, and the last follows from the vanishing of~$h^2$.
		The resulting inequality obviously violates the property of the K3 surface $(X,H)$ to be Brill--Noether general.

		\medskip

		{\bf Proof of the inequality~\eqref{of the form 1}.}
		On the one hand, we have~$\chi(\bar{D}_{i_1}) = \frac{1}{2}\bar{D}_{i_1}^2 + 2 = \varepsilon_1 + 2$.
		On the other hand, using~\eqref{eq:di-dj}, we deduce
		\begin{equation*}
		\chi(\bar{D}_{i_2} + \bar{D}_{i_3}) =
		\frac{1}{2} \left( \bar{D}_{i_2} + \bar{D}_{i_3} \right)^2 + 2 \geq
		\varepsilon_2 + \left( 3 + \varepsilon_1 + \varepsilon_2 \right) + 2 =
		\varepsilon_1 + 2\varepsilon_2 + 5.
		\end{equation*}
		Multiplying these inequalities, we deduce
		\begin{equation}\label{actual ineq of the form 1}
			\chi(\bar{D}_{i_1})\chi(\bar{D}_{i_2} + \bar{D}_{i_3}) \geq
			\left(\varepsilon_1 + 2\right)\left(\varepsilon_1 + 2\varepsilon_2 + 5\right).
		\end{equation}
		Take $c_1(\varepsilon_1,\varepsilon_2) = \left(\varepsilon_1 + 2\right)\left(\varepsilon_1 + 2\varepsilon_2 + 5\right) = \varepsilon_1^2 + \left(2\varepsilon_2+7\right)\varepsilon_1 + 4\varepsilon_2 + 10$.

		\medskip

		{\bf Proof of the inequalities~\eqref{of the form 2} and~\eqref{eq:c1-c2-inequality}.}
		The inequalities~\eqref{eq:ri-si-general} can be rewritten as
		\begin{equation}
		\label{eq:ri-si}
			r_{i_1}s_{i_1} \leq \frac{1}{2} \bar{D}^2_{i_1}+ 1 = \varepsilon_1+ 1,
			\quad
			r_{i_2}s_{i_2} \leq \frac{1}{2} \bar{D}^2_{i_2}+ 1 = \varepsilon_2 + 1,
			\quad
			r_{i_3}s_{i_3} \leq  \frac{1}{2} \bar{D}^2_{i_2}+ 1 = 1,
		\end{equation}
		Since $r_i \ge 1$ it follows that~$s_{i_1} \leq \varepsilon_1 + 1$, $s_{i_2} \leq \varepsilon_2 + 1$ and $s_{i_3}\leq 1$.
		Therefore,
		\begin{equation}
		\label{eq:h1-inequality}
		h^1(A) = s_1 + s_2 + s_3 \leq \varepsilon_1 + \varepsilon_2 + 3,
		\end{equation}
		and the equality could hold only in the case where $s_1,s_2,s_3 \geq 1$.
		Note also that if $s_i \geq 1$ then
		\begin{equation}\label{12345}
			r_i+s_i \leq r_is_i+1.
		\end{equation}
		We distinguish three cases, depending on the signs of the components~$s_i$
		of the Mukai vectors~\eqref{eq:mv-fi}, i.e.,
		\begin{enumerate}[label={\textup{(\alph*)}}]
		\item
		\label{case:all-positive}
		the case where all $s_i$ are positive,
		\item
		\label{case:one-non-positive}
		the case where only one of the $s_i$ is non-positive, and
		\item
		\label{case:two-non-positive}
		the case where two of the $s_i$ are non-positive.
		\end{enumerate}
		Note that the inequality $s_3 \geq 1$ always holds by Lemma~\ref{properties of good filtration of L bundles},
		so the above list of cases is exhaustive.
		
		{\bf Case~\ref{case:all-positive}.}
		Using Lemma~\ref{properties of good filtration of L bundles} and inequalities~\eqref{12345} and~\eqref{eq:ri-si}, we deduce
		\begin{equation}\label{sum case 1}
			h^0(A) + h^1(A) = r_1 + r_2 + r_3 + s_1 + s_2 + s_3 \leq r_1s_1 + r_2s_2 + r_3s_3 + 3 \leq \varepsilon_1 + \varepsilon_2 + 6.
		\end{equation}
		Therefore, Lemma~\ref{lem:am-gm} implies
		\begin{equation*}
		h^0(A)h^1(A) \leq
		\left(\frac{\varepsilon_1 + \varepsilon_2 + 6}{2}\right)^2.
		\end{equation*}
		Thus, in this case we have~\eqref{of the form 2}
		with~$c_2(\varepsilon_1,\varepsilon_2) = \left(\frac{\varepsilon_1 + \varepsilon_2 + 6}{2}\right)^2$.

		Now we check the inequality~\eqref{eq:c1-c2-inequality}:
		\begin{equation*}
		c_1(\varepsilon_1,\varepsilon_2) - c_2(\varepsilon_1,\varepsilon_2) =
		\left(\varepsilon_1 + 2\right)\left(\varepsilon_1 + 2\varepsilon_2 + 5\right) -
		\left(\frac{\varepsilon_1 + \varepsilon_2 + 6}{2}\right)^2 =
		\frac{3}{4}\varepsilon_1^2 - \frac{1}{4}\varepsilon_2^2 + \frac{3}{2}\varepsilon_1 \varepsilon_2 +
		4 \varepsilon_1 + \varepsilon_2 + 1.
		\end{equation*}
		Since~$\varepsilon_1 \geq \varepsilon_2 \geq 0$, this is nonnegative;
		therefore,~\eqref{eq:c1-c2-inequality} holds and, as we explained above, the K3 surface~$(X,H)$ is not Brill--Noether general.

		{\bf Case~\ref{case:one-non-positive}.}
		Suppose that $s_i,s_j \geq 1$, $s_k \leq 0$, $\{ i,j,k\} = \{ 1,2,3\}$.
		Since $s_3 \geq 1$, we have~$k \neq 3$.
		Moreover, recall from Lemma~\ref{properties of good filtration of L bundles} that~$r_2 \leq r_3$ and~$r_1 \leq r_2 + r_3$.
		Therefore,
		\begin{equation*}
		r_k \leq r_i + r_j.
		\end{equation*}
		Indeed, if $k = 2$ then $r_2 \leq r_3 < r_1 + r_3$, and if $k=1$ then $r_1 \leq r_2 + r_3$.
		Since~$s_i, s_j \geq 1$ by assumption, one has the following chain of inequalities
		\begin{equation*}
		r_k \leq
		r_i + r_j \leq
		r_is_i + r_js_j \leq
		\frac{1}{2}\bar{D}_i^2 + 1 + \frac{1}{2}\bar{D}_j^2 + 1 \leq
		\frac{1}{2}\bar{D}_{i_1}^2 + 1 + \frac{1}{2}\bar{D}_{i_2}^2 + 1 =
		\varepsilon_1 + \varepsilon_2 +  2.
		\end{equation*}
		Taking into account that~$s_k \leq 0$ and using Lemma~\ref{properties of good filtration of L bundles} and~\eqref{12345}, we deduce
		\begin{equation}\label{sum case 2}
			h^0(A)+h^1(A) = r_1 + r_2 + r_3 + s_1 + s_2 + s_3 \leq r_is_i + r_js_j + 2 + r_k \leq \varepsilon_1 + \varepsilon_2 + 4 + r_k \leq 2\varepsilon_1 + 2\varepsilon_2 + 6.
		\end{equation}
		Note also that the inequality~\eqref{eq:h1-inequality} is stirct, because~$s_k \le 0$.
		Therefore, Lemma~\ref{lem:am-gm} implies
		\begin{equation*}
		h^0(A)h^1(A) \leq
		(\varepsilon_1 + \varepsilon_2 + 2)(\varepsilon_1 + \varepsilon_2 + 4).
		\end{equation*}
		Thus, in this case we have~\eqref{of the form 2}
		with~$c_2(\varepsilon_1,\varepsilon_2) = (\varepsilon_1 + \varepsilon_2 + 2)(\varepsilon_1 + \varepsilon_2 + 4)$.

		Now we check the inequality~\eqref{eq:c1-c2-inequality}:
		\begin{equation*}
		c_1(\varepsilon_1,\varepsilon_2) - c_2(\varepsilon_1,\varepsilon_2) =
		(\varepsilon_1 + 2)( \varepsilon_1 + 2\varepsilon_2 + 5) - (\varepsilon_1 + \varepsilon_2 + 2)(\varepsilon_1 + \varepsilon_2 + 4) =
		2 + \varepsilon_1 - 2 \varepsilon_2 -\varepsilon_2^2.
		\end{equation*}
		Since~$\varepsilon_1 \geq \varepsilon_2$ and $\varepsilon_2 \in \{0,1\}$ this is nonnegative;
		therefore~\eqref{eq:c1-c2-inequality} holds and, as we explained above, the K3 surface~$(X,H)$ is not Brill--Noether general.

		{\bf Case~\ref{case:two-non-positive}.}
		In this case we have $s_3 \geq 1$, $s_1, s_2 \leq 0$.
		Moreover, using Lemma~\ref{properties of good filtration of L bundles}, we obtain
		\begin{equation*}
		h^0(A) = r_1 + r_2 + r_3 \leq 4r_3
		\qquad\text{and}\qquad
		h^1(A) = s_1 + s_2 + s_3 \le s_3.
		\end{equation*}
		Taking~\eqref{12345} and~\eqref{eq:ri-si} into account, we deduce
		\begin{equation*}
		h^0(A)h^1(A) \le 4r_3s_3 \le 4(\varepsilon_1 + 1).
		\end{equation*}
		Thus, in this case we have~\eqref{of the form 2}
		with~$c_2(\varepsilon_1,\varepsilon_2) = 4(\varepsilon_1 + 1)$.

		Now we check the inequality~\eqref{eq:c1-c2-inequality}:
		$\varepsilon_1 + 2 > \varepsilon_1 + 1$ and~$\varepsilon_1 + 2\varepsilon_2 + 5 > 4$ implies
		\begin{equation*}
		c_1(\varepsilon_1,\varepsilon_2) - c_2(\varepsilon_1,\varepsilon_2) =
		(\varepsilon_1 + 2)(\varepsilon_1 + 2\varepsilon_2 + 5) - 4(\varepsilon_1 + 1) > 0,
		\end{equation*}
		therefore,~\eqref{eq:c1-c2-inequality} holds and, as we explained above, the K3 surface~$(X,H)$ is not Brill--Noether general.

		Thus, we proved that in each of the cases~\ref{case:all-positive}, \ref{case:one-non-positive} and~\ref{case:two-non-positive}
		we have the required inequalities and deduced that the K3 surface~$(X,H)$ is not Brill--Noether general,
		completing the proof of the proposition.
	\end{proof}
	
	Finally, we consider the case $n = 4$.
	
	\begin{proposition}\label{theorem case n=4}
		In Setup~\ref{setup} assume~$n = 4$.
		Then the K3 surface $(X,H)$ is not Brill--Noether general.
	\end{proposition}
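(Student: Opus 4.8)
The plan is to exhibit a two-term decomposition $H = E_1 + E_2$ into effective nonzero classes with $h^0(E_1)h^0(E_2) > g$; by Definition~\ref{definition of a BN general K3} this shows that $(X,H)$ is not Brill--Noether general. The crucial point is that Setup~\ref{setup} already hands me a four-term decomposition $H = \bar{D}_1 + \bar{D}_2 + \bar{D}_3 + \bar{D}_4$ with $\bar{D}_i > 0$ and $\bar{D}_i^2 \ge 0$, and, in contrast to the case $n=3$, this geometric input alone will suffice: I expect not to need the Mukai vectors $\langle r_i, -D_i, s_i\rangle$ of the filtration at all.

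First I would use an averaging argument. Since $H$ is nef and each $\bar{D}_i$ is effective, all four intersection numbers $H\cdot\bar{D}_i$ are nonnegative, and they sum to $H^2 = 2g-2$. Hence at least one of them is at most the average; after renumbering I may assume
\[
H\cdot\bar{D}_1 \le \tfrac14 H^2 = \tfrac{g-1}{2}.
\]
I then take $E_1 = \bar{D}_1$ and $E_2 = \bar{D}_2 + \bar{D}_3 + \bar{D}_4 = H - \bar{D}_1$, both of which are effective and nonzero. Applying Lemma~\ref{lem:h0-chi} to each summand together with $\bar{D}_1^2 \ge 0$, I obtain $h^0(\bar{D}_1) \ge \chi(\bar{D}_1) = \tfrac12\bar{D}_1^2 + 2 \ge 2$ and
\[
h^0(H - \bar{D}_1) \ge \chi(H - \bar{D}_1) = \tfrac12(H-\bar{D}_1)^2 + 2 = g + 1 - H\cdot\bar{D}_1 + \tfrac12\bar{D}_1^2 \ge g + 1 - \tfrac{g-1}{2} = \tfrac{g+3}{2}.
\]
Multiplying the two estimates gives $h^0(\bar{D}_1)\,h^0(H-\bar{D}_1) \ge g+3 > g$, which is the desired violation of Brill--Noether generality.

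The only place where the hypothesis $n=4$ is used is the averaging bound $H\cdot\bar{D}_1 \le \tfrac14 H^2 = \tfrac{g-1}{2}$; this is exactly what keeps $g + 1 - H\cdot\bar{D}_1$ above $g/2$, so that the product clears $g$ for every genus. For $n=3$ the analogous bound would only give $H\cdot\bar{D}_1 \le \tfrac13 H^2$, which is too weak once $g$ is large — this is precisely why the $n=3$ case genuinely requires the finer filtration data $r_i, s_i$, whereas here there is no real obstacle. The one point to be careful about is simply to select the summand with the smallest intersection against $H$. As an alternative route, closer in spirit to the preceding sections, one could instead invoke Proposition~\ref{conditions on a polarizing divisor}\ref{case2} to reduce immediately to the case $\bar{D}_1^2 = \bar{D}_2^2 = \bar{D}_3^2 = \bar{D}_4^2 = 0$ and argue there; but the averaging argument above dispatches all cases uniformly and uses neither that reduction nor the harmonic filtration.
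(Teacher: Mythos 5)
Your proof is correct, and it takes a genuinely different and more elementary route than the paper's. The paper first invokes Proposition~\ref{conditions on a polarizing divisor}\ref{case2} to reduce to the case $\bar{D}_1^2 = \bar{D}_2^2 = \bar{D}_3^2 = \bar{D}_4^2 = 0$, then uses the harmonic filtration data (via $r_is_i \le 1$, $s_4 \ge 1$, and Lemma~\ref{properties of good filtration of L bundles}) to bound $h^0(A)h^1(A) \le 24$, and finally builds a two-term decomposition $(\bar{D}_1+\bar{D}_2)+(\bar{D}_3+\bar{D}_4)$ whose $\chi$-product is at least $28$, which requires Lemma~\ref{lemma for n = 4} and Lemma~\ref{no negative intersections} to force $\bar{D}_3\cdot\bar{D}_4 \ge 2$ and $\bar{D}_1\cdot\bar{D}_2 \ge 5$, concluding via \eqref{eq:rho-g}. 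You bypass all of this: nefness of $H$ gives $H\cdot\bar{D}_i \ge 0$, these four numbers sum to $H^2 = 2g-2$, so after renumbering $H\cdot\bar{D}_1 \le \frac{g-1}{2}$, and then Riemann--Roch together with Lemma~\ref{lem:h0-chi} (applicable since $\bar{D}_1 > 0$ and $H-\bar{D}_1 = \bar{D}_2+\bar{D}_3+\bar{D}_4 > 0$) yields
\begin{equation*}
h^0(\bar{D}_1)\,h^0(H-\bar{D}_1) \ge \left(\tfrac12\bar{D}_1^2+2\right)\left(g+1-H\cdot\bar{D}_1+\tfrac12\bar{D}_1^2\right) \ge 2\cdot\tfrac{g+3}{2} = g+3 > g,
\end{equation*}
never touching the Mukai vectors, the hypothesis \eqref{eq:rho-g}, or Proposition~\ref{conditions on a polarizing divisor}. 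Every step checks out. Moreover, your argument proves a strictly stronger statement: \emph{any} quasi-polarized K3 admitting a decomposition of $H$ into four effective classes of nonnegative square fails to be Brill--Noether general. In particular, case~\ref{case:bn-4} of Remark~\ref{exceptional cases} is actually vacuous, Proposition~\ref{conditions on a polarizing divisor}\ref{case2} holds without the hypothesis $D_1^2 \ge 2$, and the same computation handles every $n \ge 4$ in Setup~\ref{setup}, since the average $\frac1n H^2$ only decreases with $n$. Your closing remark is also accurate: for $n=3$ the same estimate gives only $\frac{2g+10}{3} > g$, i.e. $g < 10$, which is precisely why that case genuinely needs the filtration invariants. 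What the paper's longer route buys is only uniformity of method across the cases $n=2,3,4$; your argument could replace its $n=4$ proof outright.
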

	
	\begin{proof}
		If the condition of Proposition~\ref{conditions on a polarizing divisor}\ref{case2} is satisfied, the K3 surface~$(X, H)$ is not Brill--Noether general.
		Therefore, we can assume that the condition of Remark~\ref{exceptional cases}\ref{case:bn-4} holds, i.e.,
		\begin{equation*}
		\bar{D}_1^2 = \bar{D}_2^2 = \bar{D}_3^2 = \bar{D}_4^2 = 0.
		\end{equation*}
		Therefore, the inequalities~\eqref{eq:ri-si-general} take the form
		\begin{equation}
		\label{eq:ri-si-n4}
		r_is_i \leq \frac{1}{2}\bar{D}_i^2 + 1 = 1.
		\end{equation}
		Note that~$s_4 \ge 1$ by Lemma~\ref{properties of good filtration of L bundles} and~$r_4 \ge 1$,
		hence~$r_4 = s_4 = 1$.

		Since $r_i \geq 1$, we have~$s_i \leq 1$, and therefore Lemma~\ref{properties of good filtration of L bundles} gives
		\begin{equation*}
		h^1(A) = s_1 + s_2 + s_3 + s_4 \leq 4.
		\end{equation*}
		First, assume~$h^1(A) = 4$.
		Then~$s_1 = s_2 = s_3 = s_4 = 1$, hence~\eqref{eq:ri-si-n4} gives~$r_1 = r_2 = r_3 = r_4 = 1$, and therefore,
		again by Lemma~\ref{properties of good filtration of L bundles}	we obtain~$h^0(A) = r_1 + r_2 + r_3 + r_4 = 4$, so that
		\begin{equation}
		\label{eq:h0h1-first}
		h^0(A)h^1(A) = 4 \cdot 4 = 16.
		\end{equation}
		Now, assume~$h^1(A) \le 3$.
		Then, Lemma~\ref{properties of good filtration of L bundles} implies that~$h^0(A) = r_1 + r_2 + r_3 + r_4 \le 8r_4 = 8$, so that
		\begin{equation}
		\label{eq:h0h1-second}
		h^0(A)h^1(A) \le 8 \cdot 3 = 24.
		\end{equation}

		It remains to show that we can write~$H$ as a sum of two effective divisors
		with the product of Euler characteristics greater than~$24$.
		Now, we will not use the properties of harmonic filtrations anymore, so we allow ourselves to renumber the divisors classes~$\bar{D}_i$.
		First, without loosing generality we assume that
		\begin{equation*}
		\bar{D}_2 \cdot \bar{D}_3 \leq \bar{D}_2 \cdot \bar{D}_4 \leq \bar{D}_3 \cdot \bar{D}_4.
		\end{equation*}
		If~$\bar{D}_3 \cdot \bar{D}_4 \le 1$, Lemma~\ref{lemma for n = 4} proves that the K3 surface~$(X,H)$ is not Brill--Noether general.
		So, we can assume that
		\begin{equation*}
		\bar{D}_3 \cdot \bar{D}_4 \geq 2.
		\end{equation*}
		It follows immediately that~$(\bar{D}_3 + \bar{D}_4)^2 \geq 4$.
		
		Similarly, if~$\bar{D}_1 \cdot \bar{D}_2 \leq 2 + \frac{1}{2}\left(\bar{D}_1^2 + \bar{D}_2^2 + \left(\bar{D}_3 + \bar{D}_4\right)^2\right)$,
		Lemma~\ref{no negative intersections} proves that the K3 surface~$(X,H)$ is not Brill--Noether general.
		So, from now on assume that
		\begin{equation*}
		\bar{D}_1 \cdot \bar{D}_2 \geq 3 + \frac{1}{2}\left(\bar{D}_1^2 + \bar{D}_2^2 + \left(\bar{D}_3 + \bar{D}_4\right)^2\right) \geq 5.
		\end{equation*}
		Now, we have
		\begin{equation*}
		\chi(\bar{D}_1 + \bar{D}_2) = \frac{1}{2}(\bar{D}_1 + \bar{D}_2)^2 + 2 \geq 5 + 2 = 7,
		\qquad
		\chi(\bar{D}_3 + \bar{D}_3) = \frac{1}{2}(\bar{D}_3 + \bar{D}_4)^2 + 2 \geq 2 + 2 = 4.
		\end{equation*}
		Hence,
		\begin{equation*}
		\chi(\bar{D}_1 + \bar{D}_2)\chi(\bar{D}_3 + \bar{D}_4) \geq 7 \cdot 4 = 28.
		\end{equation*}
		Combining this inequality with~\eqref{eq:h0h1-first} and~\eqref{eq:h0h1-second},
		we conclude that in any case
		\begin{equation*}
		h^0(A)h^1(A) \leq \chi(\bar{D}_1 + \bar{D}_2)\chi(\bar{D}_3 + \bar{D}_4),
		\end{equation*}
		and~\eqref{eq:rho-g} implies that the K3 surface $(X,H)$ is not Brill--Noether general.
	\end{proof}

	Finally, we combine the above cases and prove the main theorem.
	
	\begin{proof}[Proof of theorem~\ref{main theorem}]
		Assume~$C \in |H|$ is not Brill--Noether general.
		By Lemma~\ref{global generation of L bundle^*} there is a line bundle~$A$ on~$C$ such that
		both~$A$ and~$A^\vee \otimes \omega_C$ are globally generated and~\eqref{eq:rho-g} holds.
		Let~$F = F_{A,C}$ be the corresponding Lazarsfeld bundle.
		By Corollary~\ref{good filtration for L bundles exists} it has a harmonic filtration
		which has length~$n \ge 2$ by Proposition~\ref{plazarsfeld}
		and satisfies the properties of Lemma~\ref{properties of good filtration of L bundles}.
		Thus, we are in Setup~\ref{setup}.

		If~$n = 2$, we apply Proposition~\ref{simple kernel and image},
		if~$n = 3$ we apply Proposition~\ref{theorem case n=3},
		and if~$n = 4$ we apply Proposition~\ref{theorem case n=4}.
		Finally, if~$n \geq 5$ we apply Proposition~\ref{conditions on a polarizing divisor}\ref{case1}.
		In all cases we conclude that the K3 surface~$(X,H)$ is not Brill--Noether general.
	\end{proof}

\end{document}